   \def\vec#1{\ensuremath{\mathchoice
                     {\mbox{\boldmath$\displaystyle\mathbf{#1}$}}
                     {\mbox{\boldmath$\textstyle\mathbf{#1}$}}
                     {\mbox{\boldmath$\scriptstyle\mathbf{#1}$}}
                     {\mbox{\boldmath$\scriptscriptstyle\mathbf{#1}$}}}}
   \def\vec#1{\ensuremath{\mathchoice
                     {\mbox{\boldmath$\displaystyle#1$}}
                     {\mbox{\boldmath$\textstyle#1$}}
                     {\mbox{\boldmath$\scriptstyle#1$}}
                     {\mbox{\boldmath$\scriptscriptstyle#1$}}}}
\renewcommand{\matrix}[1]{{\mathbf{#1}}}
\newcommand{\pderivative}[2]{\frac{\partial #1}{\partial #2}}
\newcommand{\jump}[1]{\left\llbracket #1 \right\rrbracket}
\newcommand{\average}[1]{\left\{\!\!\left\{#1\right\}\!\!\right\}}
\theoremstyle{plain}
\newtheorem{thm}{Theorem}
\newtheorem{cor}{Corollary}
\newtheorem{lem}{Lemma}
\theoremstyle{definition}
\theoremstyle{remark}
\newtheorem{rem}{Remark}
\numberwithin{equation}{section}
\begin{document}

\begin{frontmatter}
\title{Affordable, Entropy Conserving and Entropy Stable Flux Functions for the Ideal MHD Equations}
\author[unikoeln]{Andrew R.~Winters\corref{cor1}}
\cortext[cor1]{Corresponding author.}
\ead{awinters@math.uni-koeln.de}
\author[unikoeln]{Gregor J.~Gassner}

\address[unikoeln]{Mathematisches Institut, Universit\"{a}t zu K\"{o}ln, Weyertal 86-90, 50931 K\"{o}ln, Germany}
\begin{abstract}
In this work, we design an entropy stable, finite volume approximation for the ideal magnetohydrodynamics (MHD) equations. The method is novel as we design an affordable analytical expression of the numerical interface flux function that discretely preserves the entropy of the system. To guarantee the discrete conservation of entropy requires the addition of a particular source term to the ideal MHD system. Exact entropy conserving schemes cannot dissipate energy at shocks, thus to compute accurate solutions to problems that may develop shocks, we determine a dissipation term to guarantee entropy stability for the numerical scheme. Numerical tests are performed to demonstrate the theoretical findings of entropy conservation and robustness.
\end{abstract}
\begin{keyword} entropy conservation\sep entropy stable\sep ideal MHD equations\sep nonlinear hyperbolic conservation law\sep finite volume method
\end{keyword}

\end{frontmatter}

\section{Introduction}\label{intro}
 
The entropy in physical systems governed by (nonlinear) conservation laws is an often overlooked quantity that is conserved for smooth solutions but increases (or decreases according to the sign convention adopted) in the presence of shocks. One can design numerical methods to be entropy conservative if, discretely, the local changes of entropy are the same as predicted by the continuous entropy conservation law. The approximation is said to be entropy stable if it produces more entropy than an entropy conservative scheme.

For entropy stable numerical fluxes, Tadmor \cite{tadmor1984,tadmor1987} was the first to introduce the idea of entropy conservation to design stable numerical approximations of nonlinear hyperbolic conservation laws. On a semi-discrete level the principle is that the discrete flux function satisfies discrete analogs of the conservation laws for the conservative variables as well as the scalar conservation law for entropy. Tadmor's flux function performs well for smooth data but can be made stable for shock problems \cite{tadmor2003}. But, Tadmor's flux function, which involves an integral in phase space, is numerically expensive. For large scale simulations we want a more practical, computationally tractable, entropy conserving flux.

In the examination of entropy conservation we discuss an important distinction between entropy conservation and stability, because there is a problem with entropy conservative formulations. Such approximations can suffer breakdown if used without dissipation to capture shocks which results in large amplitude oscillations  \cite{carpenter_esdg}. Thus, an issue of entropy conservative formulations is that they may not converge to the weak solution as there is no mechanism to admit the dissipation physically required at the shock.

For the ideal magnetohydrodynamic (MHD) system the issue of entropy conservation and satisfaction of the divergence-free condition (i.e. $\nabla\cdot\vec{B}=0$) are inextricably linked. In this paper we will show that to guarantee the conservation of entropy in a discrete sense requires the addition of a source term proportional to the divergence of the magnetic field. This source term also operates as a numerical method of \textit{divergence cleaning} similar to the method introduced by Dedner et. al. \cite{dedner2002}. That is, errors in the divergence-free conditions (that can be thought of as ``numerical magnetic monopoles'' \cite{dedner2002}) are advected away from their point of origin with a speed proportional to the fluid velocity. So, for multi-dimensional approximations, this source term offers a simple mechanism to control errors in the divergence-free conditions. 

In this paper we develop affordable, entropy stable methods for the ideal MHD model. There is recent work for entropy stable approximations for the Euler equations from Ismail and Roe \cite{ismail2009} and Chandrashekar \cite{chandrashekar2013}.  The entropy stable approximations were extended to arbitrary order with a finite volume discretization by LeFloch and Rohde \cite{lefloch_rhode_2000} or a discontinuous Galerkin (DG) spectral element formulation by Carpenter et. al. \cite{carpenter_esdg}. Also, there is recent work on entropy-stable, high-order, and well-balanced approximations for the shallow water equations \cite{gassner_skew_burgers,gassner2014}.

The remainder of this paper is organized as follows: Sec. \ref{GoverningEquations} provides an introduction to the ideal MHD equations, the computational issues, and select the governing equations for which entropy will be conserved. In Sec. \ref{FVDiscretization} we briefly describe our finite volume discretization. Sec. \ref{EntropyAnalysis} defines the necessary variables and analytical tools to discuss the entropy of the ideal MHD system in a mathematically rigorous way. We derive an entropy conserving numerical flux in Sec. \ref{Sec:EntropyConservingFlux} and discuss stabilizations of the conservative flux in Sec. \ref{Sec:StableFlux}. Numerical results are presented in Sec. \ref{NumericalResults}, where we verify theoretical predictions and compare the results of our method against known results in the literature. Sec. \ref{conclusion} presents concluding remarks. \ref{3DFluxes} outlines the extension of the entropy conserving and stable flux functions to higher spatial dimensions.
\section{The Ideal Magnetohydrodynamic Equations}\label{GoverningEquations}

The ideal magnetohydrodynamic (MHD) equations are a classical model from plasma physics used to study the dynamics of electrically conducting fluids. The equations are a single set of fluid equations for the total mass density, center-of-mass momentum density, total energy density, and the magnetic field of the system \cite{rossmanith2002}. The ideal MHD equations can be written as a system of conservation laws
\begin{equation}\label{3DIDEALMHD}
\begin{aligned}
\pderivative{}{t}\begin{bmatrix} \rho \\[0.05cm] \rho\vec{u} \\[0.05cm] \rho e \\[0.05cm] \vec{B} \end{bmatrix} + &\nabla\cdot\begin{bmatrix} \rho\vec{u} \\[0.05cm]
\rho(\vec{u}\otimes\vec{u}) + \left(p+\frac{1}{2}\|\vec{B}\|^2\right)\matrix{I}-\vec{B}\otimes\vec{B}  \\[0.05cm]
\vec{u}\left(\rho e + p + \frac{1}{2}\|\vec{B}\|^2 \right) - \vec{B}(\vec{u}\cdot\vec{B}) \\[0.05cm]
\vec{B}\otimes\vec{u} - \vec{u}\otimes\vec{B}
\end{bmatrix} = 0,
\\
&\nabla\cdot\vec{B} = 0,
\end{aligned}
\end{equation}
where $\rho$, $\rho\vec{u}$, and $\rho e$ are the mass, momentum, and energy densities of the plasma system, and $\vec{B}$ is the magnetic field. The thermal pressure, $p$, is related to the conserved quantities through the ideal gas law:
\begin{equation}
p = (\gamma-1)\left(\rho e - \frac{\rho}{2}\|\vec{u}\|^2 -\frac{1}{2}\|\vec{B}\|^2 \right).
\end{equation}

The remainder of this paper focuses on the derivation of an entropy conserving and entropy stable finite volume methods for the one-dimensional ideal MHD system
\begin{equation}\label{1DIdealMHD}
\begin{aligned}
\pderivative{}{t}\begin{bmatrix} \rho \\ \rho u \\ \rho v\\ \rho w \\ \rho e \\ B_1 \\ B_2 \\ B_3 \end{bmatrix} + &\pderivative{}{x}\begin{bmatrix} 
\rho u \\ \rho u^2 + p + \frac{1}{2}\|\vec{B}\|^2 - B_1^2 \\ \rho u v - B_1 B_2 \\ \rho u w - B_1 B_3 \\ u\left(\rho e + p + \frac{1}{2}\|\vec{B}\|^2\right) - B_1(\vec{u}\cdot\vec{B}) \\ 0 \\ u B_2 - v B_1 \\ u B_3 - w B_1 \end{bmatrix} = 0,
\\
&\pderivative{B_1}{x} = 0.
\end{aligned}
\end{equation}
We consider the one-dimensional ideal MHD system to demonstrate the validity of the entropy conserving approach derived in this paper. The entropy analysis tools and flux derivations easily extend to higher spatial dimensions (as we show in \ref{3DFluxes}).

We immediately observe that the sixth equation of \eqref{1DIdealMHD} simplifies to
\begin{equation}
\pderivative{B_1}{t} = 0.
\end{equation}
Meaning that any spatial variation in $B_1$ is stationary in time. Together with the divergence-free condition $\partial_x(B_1) = 0$, we obtain the result that $B_1$ is a constant in space and time for the 1D ideal MHD system. In our derivations, however, we will treat the quantity of $B_1$ as non-constant for two important reasons:
\begin{enumerate}
\item The proof of the discrete entropy conserving flux incorporates contributions from $B_1$ that are important for cases when $B_1\ne constant$.
\item We want the discussion to remain general and easily extendable to higher dimensions.
\end{enumerate}

In general, the divergence-free condition on the magnetic field (sometimes referred to as the \textit{involution} condition or \textit{solenoidal} condition) is a physical law that reflects that magnetic monopoles have never been observed. Numerical methods for multidimensional ideal MHD must, in general, satisfy (or at least control) some discrete version of the divergence-free condition \cite{toth2000}. Failure to do so generally leads to a nonlinear numerical instability, which can cause negative pressures and/or densities. There are several approaches to control the error in $\nabla\cdot\vec{B}$ and in depth review of many methods can be found in T\'{o}th \cite{toth2000}. 

Hyperbolic systems, like the ideal MHD equations, can be efficiently and accurately solved numerically with Godunov-type methods \cite{leveque2002}. These methods require the solution of a Riemann problem at element interfaces. The homogeneous 1D ideal MHD equations \eqref{1DIdealMHD} support seven propagating plane-wave solutions (two fast magnetoacoustic, two Alfv\'{e}n, two slow magnetoacoustic, and an entropy wave) and a stationary plane-wave solution (the divergence wave). The stationary plane-wave solution comes directly from the fact that the MHD equations preserve the divergence constraint:
\begin{equation}
\pderivative{}{t}(\nabla\cdot\vec{B}) = 0.
\end{equation}
Nonlinear numerical instability can be viewed as a direct consequence of the stationary divergence wave \cite{toth2000}. Even if the divergence-free condition is satisfied initially by an approximation, it is not guaranteed to remain satisfied as the solution evolves. These errors, which could be interpreted as numerical monopoles \cite{dedner2002}, are stationary and grow in time. 

In the course of an entropy analysis of the ideal MHD equations \eqref{3DIDEALMHD} Godunov \cite{godunov1972} observed that the divergence-free condition can be incorporated into the ideal MHD equations as a source term proportional to the divergence of the magnetic field (which, on a continuous level, is adding zero). This additional source term not only allows the equations to be put in symmetric hyperbolic form \cite{barth99,godunov1972}, but it also restores Galilean invariance. With the additional term the divergence wave is no longer stationary, and instead, is advected as a passive scalar \cite{janhunen2000,powell1994}:
\begin{equation}
\pderivative{}{t}(\nabla\cdot\vec{B}) + \nabla\cdot(\vec{u}\nabla\cdot\vec{B})=0.
\end{equation}

For numerical approximations the difference between a stationary and a propagating divergence wave is significant. Now, errors generated by ``numerical monopoles'' are advected away from their point of origin with a speed proportional to the fluid velocities. A similar idea lies behind the hyperbolic divergence cleaning method of Dedner et. al. \cite{dedner2002}. Powell \cite{powell1994} demonstrated that numerical methods applied to the MHD equations with the symmetrizing source term were much more stable than the same methods applied to the original MHD equations. The modified form of the MHD equations is often referred to as the 8-wave formulation, since this augmented system supports eight propagating plane wave solutions. Although this approach has been used with some success, it does have a significant drawback: the 8-wave formulation is non-conservative and difficulties with obtaining the correct weak solution have been documented in the literature (see for example T\'{o}th \cite{toth2000}).

Janhunen \cite{janhunen2000} used a proper generalization of Maxwell's equations when magnetic monopoles are present and imposed electromagnetic duality invariance of the Lorentz force to derive {\color{black}{an alternative}} source term for the MHD system:
\begin{equation}\label{JanhunenSource}
\pderivative{}{t}\begin{bmatrix} \rho \\[0.05cm] \rho\vec{u} \\[0.05cm] \rho e \\[0.05cm] \vec{B} \end{bmatrix} + \nabla\cdot\begin{bmatrix} \rho\vec{u} \\[0.05cm]
\rho(\vec{u}\otimes\vec{u}) + \left(p+\frac{1}{2}\|\vec{B}\|^2\right){\color{black}{\matrix{I}}}-\vec{B}\otimes\vec{B}  \\[0.05cm]
\vec{u}\left(\rho e + p + \frac{1}{2}\|\vec{B}\|^2 \right) - \vec{B}(\vec{u}\cdot\vec{B}) \\[0.05cm]
\vec{B}\otimes\vec{u} - \vec{u}\otimes\vec{B}
\end{bmatrix} = -(\nabla\cdot\vec{B})\begin{bmatrix} 0\\\vec{0}\\0\\\vec{u}\end{bmatrix}.
\end{equation}
The Janhunen source term \eqref{JanhunenSource} preserves the conservation of momentum and energy, and treats the magnetic field as an advected scalar. Additionally, the source term \eqref{JanhunenSource} restores the positivity of the Riemann problem \cite{janhunen2000} and the Lorentz invariance of the ideal MHD system \cite{dellar2001}.

The Janhunen source term also plays an important role to guarantee the discrete conservation of entropy, a fact we show in Sec. \ref{Sec:EntropyConservingFlux}. Thus, for the governing equations and analysis of our prototype entropy conserving flux for the ideal MHD system we consider the one-dimensional problem with the addition of the Janhunen source term:
\begin{equation}\label{1DIdealMHDwithSource}
\pderivative{\vec{q}}{t} + \pderivative{\vec{f}}{x} = \pderivative{}{t}\begin{bmatrix} \rho \\ \rho u \\ \rho v\\ \rho w \\ \rho e \\ B_1 \\ B_2 \\ B_3 \end{bmatrix} + \pderivative{}{x}\begin{bmatrix} 
\rho u \\ \rho u^2 + p + \frac{1}{2}\|\vec{B}\|^2 - B_1^2 \\ \rho u v - B_1 B_2 \\ \rho u w - B_1 B_3 \\ u\left(\rho e + p + \frac{1}{2}\|\vec{B}\|^2\right) - B_1(\vec{u}\cdot\vec{B}) \\ 0 \\ u B_2 - v B_1 \\ u B_3 - w B_1 \end{bmatrix} = -\pderivative{B_1}{x}\begin{bmatrix} 0\\0\\0\\0\\0\\u\\v\\w\end{bmatrix} = \vec{s},
\end{equation}
where $\vec{q}$ is the vector of conserved variables, $\vec{f}$ is the vector flux, and $\vec{s}$ is the vector source term.

\section{Finite Volume Discretization}\label{FVDiscretization} 

The finite volume (FV) method is a discretization technique for partial differential equations especially useful for the approximation of systems of conservations laws. The finite volume method is designed to approximate conservation laws in their integral form, e.g.,
\begin{equation}
\int_V\vec{q}_t\,dx + \int_{\partial V} \vec{f}\cdot\hat{\vec{n}}\,dS = 0.
\end{equation}
For instance, in one spatial dimension we break the interval into non-overlapping intervals
\begin{equation}
V_i = \left[x_{i-\tfrac{1}{2}},x_{i+\tfrac{1}{2}}\right],
\end{equation}
and the integral equation of a balance law, with a source term, becomes
\begin{equation}\label{FVSource}
\frac{d}{dt}\int_{x_{i-{1}/{2}}}^{x_{i+{1}/{2}}} \vec{q}\,dx + \vec{f}^*\left(x_{i+{1}/{2}}\right) - \vec{f}^*\left(x_{i-{1}/{2}}\right) = \int_{x_{i-{1}/{2}}}^{x_{i+{1}/{2}}} \vec{s}\,dx.
\end{equation}
A common approximation is to assume that the solution and the source term are constant within the volume. Then we determine, for example, what is analogous to a midpoint quadrature approximation of the solution integral
\begin{equation}
\int_{x_{i-{1}/{2}}}^{x_{i+{1}/{2}}} \vec{q}\,dx \approx \int_{x_{i-{1}/{2}}}^{x_{i+{1}/{2}}} \vec{q}_i\,dx = \vec{q}_i\Delta x_i.
\end{equation}
Note that the solution is typically discontinuous at the boundaries of the volumes. To resolve this, we introduce the idea of a ``numerical flux,'' $\vec{f}^*(\vec{q}^L,\vec{q}^R)$, often derived from the (approximate) solution of a Riemann problem. That is, $\vec{f}^*$ is a function that takes the two states of the solution at an element interface and returns a single flux value. For consistency, we require that 
\begin{equation}\label{consistency}
\vec{f}^*(\vec{q},\vec{q}) = \vec{f},
\end{equation}
that is, the numerical flux is equivalent to the physical flux if the states on each side of the interface are identical. A significant portion of this paper is devoted to the derivation of a numerical flux that conserves the discrete entropy of the system for the 1D ideal MHD equations \eqref{1DIdealMHDwithSource}. So we defer the discussion of the numerical flux to Sec. \ref{Sec:EntropyConservingFlux}. 

We must also address how to discretize the source term $\vec{s}$ in \eqref{FVSource}. There is a significant amount of freedom in the source term discretization. Previous work in \cite{fjordholm2011,winters2015} has demonstrated that designing entropy conserving methods for equations with a source term requires special treatment. In the later derivations a consistent source term discretization necessary for entropy conservation will reveal itself. So, the in-depth discussion of the discrete treatment of the source term is discussed in Sec. \ref{Sec:EntropyConservingFlux}. 

For full clarity, we choose the general form of the finite volume discretization to be
\begin{equation}\label{FVMethodOurs}
{\color{black}{\left(\vec{q}_t\right)_i}} +\frac{1}{\Delta x_i}\left(\vec{f}_{i+\tfrac{1}{2}}^* - \vec{f}_{i-\tfrac{1}{2}}^*\right) = \vec{s}_i = \frac{1}{2}\left(\vec{s}_{i+\tfrac{1}{2}} + \vec{s}_{i-\tfrac{1}{2}}\right),
\end{equation}
where, in a general sense, the discrete source term in cell $i$ will contribute {\color{black}{at each}} interface $i + 1/2$ and $i -1/2$.

\section{Entropy Analysis}\label{EntropyAnalysis}

In this section we define the entropy variables, entropy Jacobian, and other quantities necessary to develop an entropy stable approximation for the ideal MHD equations. We note that one can find a fully general and detailed description of entropy stability theory in, for example, \cite{barth99,fjordholm2012}. 

In the case of ideal MHD a suitable entropy is the physical entropy density (scaled by the constant $(\gamma -1)$ for convenience)
\begin{equation}\label{IdealMHDEntropyFunction}
U(\vec{q}) = -\frac{\rho s}{\gamma -1},
\end{equation}
where $s = \ln(p) - \gamma\ln(\rho)$ is the physical entropy and $\vec{q}$ is the vector of conservative variables. The minus sign in \eqref{IdealMHDEntropyFunction} is conventional in the theory of hyperbolic conservation laws to ensure a decreasing entropy function. The entropy flux for 1D ideal MHD is
\begin{equation}
F(\vec{q}) = uU = -\frac{\rho u s}{\gamma-1}.
\end{equation}

The entropy variables are defined as
\begin{equation}\label{entropyVariables}
\vec{v} \coloneqq U_{\vec{q}} = \left[ \frac{\gamma - s}{\gamma - 1} -\frac{\rho\|\vec{u}\|^2}{2p},\;\frac{\rho u}{p},\;\frac{\rho v}{p},\;\frac{\rho w}{p},\;-\frac{\rho}{p},\;\frac{\rho B_{\color{black}{1}}}{p},\;\frac{\rho B_{\color{black}{2}}}{p},\;\frac{\rho B_{\color{black}{3}}}{p}\right]^T.
\end{equation}
The entropy variables \eqref{entropyVariables} are equipped with the symmetric positive definite (s.p.d) Jacobian matrices
\begin{equation}
\vec{q}_{\vec{v}}\coloneqq\matrix{H}^{-1},
\end{equation}
and
\begin{equation}\label{entropyJacobian}
 \vec{v}_{\vec{q}}\coloneqq\matrix{H} = \begin{bmatrix}
\rho & \rho u & \rho v & \rho w & \rho e - \frac{1}{2}\|\vec{B}\|^2 & 0 & 0 & 0 \\[0.1cm]
\rho u & \rho u^2 + p & \rho u v & \rho u w & \rho {H} u & 0 & 0 & 0 \\[0.1cm]
\rho v & \rho u v & \rho v^2 + p& \rho v w & \rho {H} v & 0 & 0 & 0 \\[0.1cm]
\rho w & \rho u w & \rho v w & \rho w^2 + p & \rho {H} w & 0 & 0 & 0 \\[0.1cm]
\rho e - \frac{1}{2}\|\vec{B}\|^2 & \rho{H} u & \rho{H} v & \rho{H} w & \rho{H}^2 - \frac{a^2 p }{\gamma-1} + \frac{a^2\|\vec{B}\|^2}{\gamma} & \frac{pB_1}{\rho}& \frac{pB_2}{\rho}& \frac{pB_3}{\rho}\\[0.1cm]
0 & 0 & 0 & 0 & \frac{pB_1}{\rho} & \frac{p}{\rho} & 0 & 0 \\[0.1cm]
0 & 0 & 0 & 0 & \frac{pB_2}{\rho} & 0 & \frac{p}{\rho} & 0 \\[0.1cm]
0 & 0 & 0 & 0 & \frac{pB_3}{\rho} & 0 & 0 & \frac{p}{\rho} 
\end{bmatrix},
\end{equation}
where
\begin{equation}
a^2 = \frac{p\gamma}{\rho},\quad\rho e = \frac{p}{\gamma-1} + \frac{\rho}{2}\|\vec{u}\|^2 + \frac{1}{2}\|\vec{B}\|^2,\quad H = \frac{a^2}{\gamma-1} + \frac{1}{2}\|\vec{u}\|^2.
\end{equation}
Finally, as it will be of use in later derivations, we compute the entropy potential to be
\begin{equation}\label{EntropyPotential}
\phi = \vec{v}\cdot\vec{f} - F = \rho u + \frac{\rho u\|\vec{B}\|^2}{2p} - \frac{\rho B_1(\vec{u}\cdot\vec{B})}{p}.
\end{equation}

\subsection{Discrete Entropy Conservation for the 1D Ideal MHD Equations}\label{Sec:DiscreteEntropy}

Next, we introduce the concept of discrete entropy conservation. Let's assume that we have two adjacent states $(L,R)$ with cell areas $(\Delta x_L,\Delta x_R)$. We discretize the ideal MHD equations semi-discretely and examine the approximation at the $i+\tfrac{1}{2}$ interface. We suppress the interface index unless it is necessary for clarity. {\color{black}{Also note the factor of one half from the source term discretization in \eqref{FVMethodOurs}.}}
\begin{equation}\label{FVupdate}
\begin{aligned}
\Delta x_L\pderivative{\vec{q}_L}{t} &= \vec{f}_L - \vec{f}^*  + {\color{black}{\frac{1}{2}}}\Delta x_L{\vec{s}}_{i+\tfrac{1}{2}},\\
\Delta x_R\pderivative{\vec{q}_R}{t} &= \vec{f}^*-\vec{f}_R + {\color{black}{\frac{1}{2}}}\Delta x_R{\vec{s}}_{i+\tfrac{1}{2}}.
\end{aligned}
\end{equation}
We can interpret the update \eqref{FVupdate} as a finite volume scheme where we have left and right cell-averaged values separated by a common flux interface.

We premultiply the expressions \eqref{FVupdate} by the entropy variables to convert to entropy space. From the chain rule we know that $U_t = \vec{v}^T\vec{q}_t$, hence a semi-discrete entropy update is
\begin{equation}\label{EntropyUpdate}
\begin{aligned}
\Delta x_L\pderivative{U_L}{t} &= \vec{v}^T_L\left(\vec{f}_L - \vec{f}^*  +{\color{black}{\frac{1}{2}}}\Delta x_L{\vec{s}}_{i+\tfrac{1}{2}}\right), \\
\Delta x_R\pderivative{U_R}{t} &= \vec{v}^T_R\left(\vec{f}^*-\vec{f}_R + {\color{black}{\frac{1}{2}}}\Delta x_R{\vec{s}}_{i+\tfrac{1}{2}}\right).
\end{aligned}
\end{equation}
If we denote the jump in a state as $\jump{\cdot} = (\cdot)_R - (\cdot)_L$ and the average of a state as $\average{\cdot} = ((\cdot)_R + (\cdot)_L)/2$, then the total update will be
\begin{equation}\label{TotalUpdate}
\pderivative{}{t}(\Delta x_L U_L + \Delta x_R U_R) =  \jump{\,\vec{v}\,}^T\vec{f}^*-\jump{\,\vec{v}\cdot\vec{f}\,}  + \average{\Delta x\vec{v}}^T{\vec{s}}_{i+\tfrac{1}{2}}.
\end{equation}
We want the discrete entropy update to satisfy the discrete entropy conservation law. To achieve this, we require
{\color{black}{\begin{equation}\label{entropyConservationCondition1}
\jump{\,\vec{v}\,}^T\vec{f}^* -\jump{\,\vec{v}\cdot\vec{f}\,} + \average{\Delta x\vec{v}}^T{\vec{s}}_{i+\tfrac{1}{2}}  = -\jump{\, F\,}.
\end{equation}}}
We combine the known entropy potential $\phi$ in \eqref{EntropyPotential} and the linearity of the jump operator to rewrite the entropy conservation condition \eqref{entropyConservationCondition1} as 
\begin{equation}\label{entropyConservationCondition2}
\jump{\,\vec{v}\,}^T\vec{f}^* = \jump{\, \rho u \,} +\jump{\,\frac{\rho u\|\vec{B}\|^2}{2p}\,} - \jump{\,\frac{\rho B_1(\vec{u}\cdot\vec{B})}{p}\,} - \average{\Delta x \vec{v}}^T{\vec{s}}_{i+\tfrac{1}{2}}.
\end{equation}
We denote the constraint \eqref{entropyConservationCondition2} as the discrete entropy conserving condition. This is a single condition on the numerical flux vector $\vec{f}^*$, so there are many potential solutions for the entropy conserving flux. Recall, however, that we have the additional requirement that the numerical flux must be consistent \eqref{consistency}. We develop the expression for $\vec{f}^*$ in Sec. \ref{Sec:EntropyConservingFlux} where we will see that the discretization of the source term ${\vec{s}_{i+1/2}}$ plays an important role to ensure that \eqref{entropyConservationCondition2} is satisfied.

\section{Derivation of an Entropy Stable Numerical Flux}\label{EntropyFlux}

With the necessary entropy variable and Jacobian definitions as well as the formulation of the discrete entropy conserving condition \eqref{entropyConservationCondition2} we are ready to derive an affordable entropy conserving numerical flux in Sec. \ref{Sec:EntropyConservingFlux}. As we previously noted, entropy conserving methods may suffer breakdown in the presence of shocks \cite{carpenter_esdg}. Thus, in Sec. \ref{Sec:StableFlux}, we will design an entropy stable numerical flux that uses the entropy conserving flux as a base and incorporates a dissipation term required for stability.

\subsection{Entropy Conserving Numerical Flux for the 1D Ideal MHD Equations}\label{Sec:EntropyConservingFlux}

We have previously defined the arithmetic mean. To derive an entropy conserving flux we will also require the logarithmic mean
\begin{equation}\label{logMean}
(\cdot)^{\ln} \coloneqq \frac{(\cdot)_L - (\cdot)_R}{\ln((\cdot)_L) - \ln((\cdot)_R)}.
\end{equation}
A {\color{black}{numerically stable}} procedure to compute the logarithmic mean is described by Ismail and Roe \cite{ismail2009} (Appendix B). 

\begin{thm}(Entropy Conserving Numerical Flux)
If we introduce the parameter vector
\begin{equation}\label{zinProof}
\vec{z} = \left[\sqrt{\frac{\rho}{p}},\sqrt{\frac{\rho}{p}}u,\sqrt{\frac{\rho}{p}}v,\sqrt{\frac{\rho}{p}}w,\sqrt{\rho p},B_1, B_2, B_3\right]^T,
\end{equation}
the averaged quantities for the primitive variables and products
\begin{equation}
\begin{aligned}
&\hat{\rho} = \average{z_1}z_5^{\ln},\;\;\hat{u}_1=\frac{\average{z_2}}{\average{z_1}},\;\; \hat{v}_1=\frac{\average{z_3}}{\average{z_1}},\;\;\hat{w}_1 = \frac{\average{z_4}}{\average{z_1}},\;\; \hat{p}_1 = \frac{\average{z_5}}{\average{z_1}},\;\;\hat{p}_2 = \frac{\gamma+1}{2\gamma}\frac{z_5^{\ln}}{z_1^{\ln}} + \frac{\gamma-1}{2\gamma}\frac{\average{z_5}}{\average{z_1}}, \\[0.1cm]
&\qquad\hat{u}_2 = \frac{\average{z_1 z_2}}{\average{z_1^2}},\;\;\hat{v}_2 = \frac{\average{z_1 z_3}}{\average{z_1^2}},\;\;\hat{w}_2 = \frac{\average{z_1 z_4}}{\average{z_1^2}},\;\;\hat{B_1} = \average{z_6},\;\;\hat{B_2} = \average{z_7},\;\;\hat{B_3} = \average{z_8}, \\[0.1cm]
&\qquad\;\;\;\accentset{\circ}{B_1} = \average{z_6^2},\;\;\accentset{\circ}{B_2} = \average{z_7^2},\;\;\accentset{\circ}{B_3} = \average{z_8^2},\;\;\widehat{B_1 B_2} = \average{z_6 z_7},\;\;\widehat{B_1B_3} = \average{z_6 z_8},
\end{aligned}
\end{equation}
and discretize the source term in the finite volume method to contribute to each element as
\begin{equation}\label{SourceTermDisc}
\vec{s}_i = {\color{black}{\frac{1}{2}}}\left(\vec{s}_{i+\tfrac{1}{2}} + \vec{s}_{i-\tfrac{1}{2}}\right) = -\frac{1}{2}\left(
\jump{B_1}_{i+\tfrac{1}{2}}\begin{bmatrix}
0\\
0\\
0\\
0\\
0\\
\frac{\average{z_1 z_2}\hat{B_1}}{\average{\Delta x z_1^2 B_1}}\\[0.15cm]
\frac{\average{z_1 z_3}\hat{B_2}}{\average{\Delta x z_1^2 B_2}}\\[0.15cm]
\frac{\average{z_1 z_4}\hat{B_3}}{\average{\Delta x z_1^2 B_3}}
\end{bmatrix}_{i+\tfrac{1}{2}}
+
\jump{B_1}_{i-\tfrac{1}{2}}\begin{bmatrix} 
0\\
0\\
0\\
0\\
0\\
\frac{\average{z_1 z_2}\hat{B_1}}{\average{\Delta x z_1^2 B_1}}\\[0.15cm]
\frac{\average{z_1 z_3}\hat{B_2}}{\average{\Delta x z_1^2 B_2}}\\[0.15cm]
\frac{\average{z_1 z_4}\hat{B_3}}{\average{\Delta x z_1^2 B_3}}
\end{bmatrix}_{i-\tfrac{1}{2}}
\right),
\end{equation}
then we can determine a discrete, entropy conservative flux to be
\begin{equation}\label{Eq:entropyconservative}
\vec{f}^{*,ec} = \begin{bmatrix}
\hat{\rho}\hat{u}_1 \\
\hat{p}_1 + \hat{\rho}\hat{u}_1^2 + \frac{1}{2}\left(\accentset{\circ}{B_1}+\accentset{\circ}{B_2}+\accentset{\circ}{B_3}\right) - \accentset{\circ}{B_1} \\
\hat{\rho}\hat{u}_1\hat{v}_1 - \widehat{B_1 B_2} \\ 
\hat{\rho}\hat{u}_1\hat{w}_1 -\widehat{B_1 B_3} \\
\frac{\gamma \hat{u}_1\hat{p}_2}{\gamma -1} + \frac{\hat{\rho}\hat{u}_1}{2}(\hat{u}_1^2 + \hat{v}_1^2 + \hat{w}_1^2) + \hat{u}_2\left(\hat{B}_2^2 +\hat{B}_3^2\right)- \hat{B_1}\left(\hat{v}_2\hat{B_2} +\hat{w}_2\hat{B_3}\right) \\
0 \\
\hat{u}_2\hat{B_2} - \hat{v}_2\hat{B_1} \\
\hat{u}_2\hat{B_3} - \hat{w}_2\hat{B_1}
\end{bmatrix}.
\end{equation}
\end{thm}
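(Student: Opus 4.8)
The plan is to verify directly that the flux \eqref{Eq:entropyconservative}, together with the source discretization \eqref{SourceTermDisc}, satisfies the single scalar constraint \eqref{entropyConservationCondition2}, and then to confirm consistency \eqref{consistency}. The entire computation is organized around the parameter vector \eqref{zinProof}, whose purpose is to render every entropy variable in \eqref{entropyVariables} either a low-degree polynomial or a logarithm in the components of $\vec{z}$. Indeed, from $\rho = z_1 z_5$, $p = z_5/z_1$, $u = z_2/z_1$, $v = z_3/z_1$, $w = z_4/z_1$ and $B_k = z_{5+k}$ one reads off $v_2 = z_1 z_2$, $v_3 = z_1 z_3$, $v_4 = z_1 z_4$, $v_5 = -z_1^2$, $v_6 = z_1^2 z_6$, $v_7 = z_1^2 z_7$, $v_8 = z_1^2 z_8$, while the thermodynamic component collapses to $v_1 = \frac{\gamma}{\gamma-1} + \ln z_5 + \frac{\gamma+1}{\gamma-1}\ln z_1 - \frac{1}{2}(z_2^2 + z_3^2 + z_4^2)$.

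First I would compute each jump $\jump{v_i}$ using the two elementary identities $\jump{ab} = \average{a}\jump{b} + \average{b}\jump{a}$ and $\jump{\ln a} = \jump{a}/a^{\ln}$, the latter being exactly the definition of the logarithmic mean \eqref{logMean}. This turns $\jump{\vec{v}}$ into an explicit linear combination of the fundamental jumps $\jump{z_1},\dots,\jump{z_8}$ with coefficients built from arithmetic and logarithmic means; note that the appearance of $z_1^{\ln}$ and $z_5^{\ln}$ in the averaged quantities, in particular in $\hat{\rho}$ and $\hat{p}_2$, traces back precisely to the $\ln z_1$ and $\ln z_5$ terms of $v_1$. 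Substituting these jumps and the given flux components into the left-hand side $\jump{\vec{v}}^T\vec{f}^{*,ec}$ yields a polynomial in means and jumps of the $z_k$.

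Next I would expand the right-hand side of \eqref{entropyConservationCondition2} in the same variables. The three physical jump terms coming from the entropy potential \eqref{EntropyPotential} are rewritten with the same product rule, and the source contraction is handled using the decisive cancellation built into \eqref{SourceTermDisc}: since $v_6 = z_1^2 B_1$, $v_7 = z_1^2 B_2$ and $v_8 = z_1^2 B_3$, the factor $\average{\Delta x\, z_1^2 B_k}$ sitting inside $\average{\Delta x\vec{v}}$ cancels the identical denominator in the $k$-th source component, leaving
\begin{equation}
\average{\Delta x\vec{v}}^T\vec{s}_{i+\frac{1}{2}} = -\jump{B_1}\left(\average{z_1 z_2}\hat{B_1} + \average{z_1 z_3}\hat{B_2} + \average{z_1 z_4}\hat{B_3}\right),
\end{equation}
which is free of $\Delta x$ and supplies exactly the magnetic cross-terms that the bare flux cannot produce. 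With both sides now written as linear combinations of the independent jumps $\jump{z_1},\dots,\jump{z_8}$ (rewriting $\jump{z_i^2} = 2\average{z_i}\jump{z_i}$ where needed), the identity \eqref{entropyConservationCondition2} is established by matching the coefficient of each such jump.

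I expect the main obstacle to be the energy component (row five of \eqref{Eq:entropyconservative}), whose contribution to $\jump{\vec{v}}^T\vec{f}^{*,ec}$ couples the pressure average $\hat{p}_2$, the two velocity averages $\hat{u}_1,\hat{u}_2$, and all the magnetic products simultaneously. Matching its coefficients of $\jump{z_5}$ and $\jump{z_1}$ against the $\jump{\rho u}$ and $\jump{\rho u\|\vec{B}\|^2/(2p)}$ terms on the right is where the specific form of $\hat{p}_2$ is forced, namely the convex combination of $z_5^{\ln}/z_1^{\ln}$ and $\average{z_5}/\average{z_1}$ with weights $(\gamma+1)/(2\gamma)$ and $(\gamma-1)/(2\gamma)$; this is the only place where logarithmic means must be balanced against arithmetic means, and it is the algebraic crux of the proof. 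Once the coefficient bookkeeping closes there, consistency \eqref{consistency} follows immediately by setting $(\cdot)_L = (\cdot)_R$: every arithmetic mean collapses to the common value, every logarithmic mean likewise returns that value (so $\hat{p}_2 \to p$), and $\jump{B_1}\to 0$ annihilates the source, so that $\vec{f}^{*,ec}\to\vec{f}$.
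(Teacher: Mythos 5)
Your proposal is correct and follows essentially the same route as the paper: both reduce the scalar condition \eqref{entropyConservationCondition2} to a coefficient match on the independent linear jumps $\jump{z_1},\dots,\jump{z_8}$, with the source discretization's denominators $\average{\Delta x\, z_1^2 B_k}$ cancelling against $\average{\Delta x\,\vec{v}}$ to supply the $\jump{z_6}$ terms that the flux (whose sixth component must vanish) cannot. The only difference is direction — the paper treats the $f_k^*$ as unknowns and solves the resulting eight equations, thereby deriving the flux and discovering the source discretization, whereas you verify the stated formulas — but the underlying algebra is identical.
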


\begin{proof}
To derive an affordable entropy conservative flux for the one-dimensional ideal MHD equations we first expand the discrete entropy conserving condition \eqref{entropyConservationCondition2} componentwise to find
\begin{equation}\label{componentJumpCondition}
\begin{aligned}
-f_1^*\left(\frac{\jump{s}}{\gamma-1} + \jump{\frac{\rho\|\vec{u}\|^2}{2p}}\right) + f_2^*\jump{\frac{\rho u}{p}}+ f_3^*\jump{\frac{\rho v}{p}}&+ f_4^*\jump{\frac{\rho w}{p}}- f_5^*\jump{\frac{\rho}{p}}+ f_6^*\jump{\frac{\rho B_1}{p}}+ f_7^*\jump{\frac{\rho B_2}{p}}+ f_8^*\jump{\frac{\rho B_3}{p}} \\ &=
\jump{\, \rho u \,} +\jump{\,\frac{\rho u\|\vec{B}\|^2}{2p}\,} - \jump{\,\frac{\rho B_1(\vec{u}\cdot\vec{B})}{p}\,} - \average{\Delta x \vec{v}}^T{\vec{s}}_{i+\tfrac{1}{2}}.
\end{aligned}
\end{equation}
To determine the unknown components of $\vec{f}^*$ we want to expand each jump term in \eqref{componentJumpCondition} into linear jump components. This will provide us with a system of eight equations from which we determine $\vec{f}^*$. To obtain linear jumps we define the parameter vector $\vec{z}$ such that there is no mixing of the hydrodynamic and magnetic field variables:
\begin{equation}
\vec{z} \coloneqq \left[\sqrt{\frac{\rho}{p}},\sqrt{\frac{\rho}{p}}u,\sqrt{\frac{\rho}{p}}v,\sqrt{\frac{\rho}{p}}w,\sqrt{\rho p},B_1, B_2, B_3\right]^T,
\end{equation}
with the identities
\begin{equation}\label{zIdentities}
\begin{aligned}
\frac{\rho}{p} = z_1^2,\quad\frac{\rho u}{p} = z_1z_2,\quad\frac{\rho v}{p} = z_1z_3,\quad\frac{\rho w}{p} = z_1z_4,\quad\frac{\rho u^2}{p} = z_2^2,\quad\frac{\rho v^2}{p} = z_3^2,\quad\frac{\rho w^2}{p} = z_4^2,\quad \rho u = z_2z_5, \\[0.1cm]
s = -(\gamma-1)\ln(z_5)-(\gamma+1)\ln(z_1),\quad\frac{\rho B_1}{p} = z_1^2 z_6,\quad\frac{\rho B_2}{p} = z_1^2 z_7,\quad\frac{\rho B_3}{p} = z_1^2 z_8,\quad\frac{\rho u B_1^2}{2p} = \frac{1}{2}z_1 z_2 z_6^2, \\[0.1cm]
\frac{\rho u B_2^2}{2p} = \frac{1}{2}z_1 z_2 z_7^2,\quad\frac{\rho u B_3^2}{2p} = \frac{1}{2}z_1 z_2 z_8^2,\quad\frac{\rho v B_1 B_2}{p} = z_1 z_3 z_6 z_7,\quad\frac{\rho w B_1 B_3}{p} = z_1 z_4 z_6 z_8.\qquad\quad
\end{aligned}
\end{equation}
Finally, we use the logarithmic mean \eqref{logMean} to rewrite the jump in the entropy as
\begin{equation}
\jump{s} = -(\gamma-1)\frac{\jump{z_5}}{z_5^{\ln}} - (\gamma+1)\frac{\jump{z_1}}{z_1^{\ln}}.
\end{equation}

We use the parameter identities \eqref{zIdentities} and algebraic identities of the jump operator
\begin{equation}
\jump{ab} = \average{a}\jump{b} + \average{b}\jump{a},\quad \jump{a^2} = 2\average{a}\jump{a},
\end{equation}
to rewrite the left and right hand sides of the entropy conserving condition \eqref{componentJumpCondition}. First we have from the left of the discrete entropy conserving condition \eqref{entropyConservationCondition2}:
\begin{equation}\label{LHSCondition}
\begin{aligned}
f_1^*&\left\{\left(\frac{\jump{z_5}}{z_5^{\ln}} + \frac{\gamma+1}{\gamma-1}\frac{\jump{z_1}}{z_1^{\ln}}\right) - \average{z_2}\jump{z_2} - \average{z_3}\jump{z_3} - \average{z_4}\jump{z_4}\right\} + f_2^*(\average{z_2}\jump{z_1} + \average{z_1}\jump{z_2}) \\[0.1cm]
&+ f_3^*(\average{z_3}\jump{z_1} + \average{z_1}\jump{z_3})+ f_4^*(\average{z_4}\jump{z_1} + \average{z_1}\jump{z_4}) - 2f_5^*\average{z_1}\jump{z_1} \\[0.1cm] &+ f_6^*(2\average{z_1}\average{z_6}\jump{z_1}+\average{z_1^2}\jump{z_6}) \\[0.1cm]&+ f_7^*(2\average{z_1}\average{z_7}\jump{z_1}+\average{z_1^2}\jump{z_7}) \\[0.1cm] &+ f_8^*(2\average{z_1}\average{z_8}\jump{z_1}+\average{z_1^2}\jump{z_8}).
\end{aligned}
\end{equation}
Next we expand the right hand side of \eqref{componentJumpCondition} into a combination of linear jumps:
\begin{align}\label{RHSCondition}\nonumber
\jump{\rho u} &= \jump{z_2 z_5} = \average{z_5}\jump{z_2} + \average{z_2}\jump{z_5}, \\[0.1cm]\nonumber
\jump{\frac{\rho u B_1^2}{2p}} &= \frac{1}{2}\jump{z_1 z_2 z_6^2} = \frac{1}{2}\average{z_2}\average{z_6^2}\jump{z_1} + \frac{1}{2}\average{z_1}\average{z_6^2}\jump{z_2} + \average{z_1z_2}\average{z_6}\jump{z_6}, \\[0.1cm]\nonumber
\jump{\frac{\rho u B_2^2}{2p}} &= \frac{1}{2}\jump{z_1 z_2 z_7^2} = \frac{1}{2}\average{z_2}\average{z_7^2}\jump{z_1} + \frac{1}{2}\average{z_1}\average{z_7^2}\jump{z_2} + \average{z_1z_2}\average{z_7}\jump{z_7}, \\[0.1cm]
\jump{\frac{\rho u B_3^2}{2p}} &= \frac{1}{2}\jump{z_1 z_2 z_8^2} = \frac{1}{2}\average{z_2}\average{z_8^2}\jump{z_1} + \frac{1}{2}\average{z_1}\average{z_8^2}\jump{z_2} + \average{z_1z_2}\average{z_8}\jump{z_8}, \\[0.1cm]\nonumber
\jump{\frac{\rho u B_1^2}{p}} &= \jump{z_1 z_2 z_6^2} = \average{z_2}\average{z_6^2}\jump{z_1} +\average{z_1}\average{z_6^2}\jump{z_2} + 2\average{z_1z_2}\average{z_6}\jump{z_6}, \\[0.1cm]\nonumber
\jump{\frac{\rho v B_1 B_2}{p}} &= \jump{z_1 z_3 z_6 z_7} = \average{z_3}\average{z_6 z_7}\jump{z_1} +\average{z_1}\average{z_6 z_7}\jump{z_3} + \average{z_1z_3}\average{z_7}\jump{z_6} +\average{z_1 z_3}\average{z_6}\jump{z_7}, \\[0.1cm]\nonumber
\jump{\frac{\rho w B_1 B_3}{p}} &= \jump{z_1 z_4 z_6 z_8} = \average{z_4}\average{z_6 z_8}\jump{z_1} +\average{z_1}\average{z_6 z_8}\jump{z_4} + \average{z_1z_4}\average{z_8}\jump{z_6} +  \average{z_1z_4}\average{z_6}\jump{z_8}.
\end{align}
Finally, we expand the source term contribution on the right hand side. For now we leave the specific discretization of the source term general as a consistent approximation will reveal itself in the later analysis. First we note that the derivative term in the Janhunen source term \eqref{JanhunenSource} is of the form 
\begin{equation}\label{firstOrderApprox}
    \pderivative{}{x}\!\left(B_1\right) \approx \frac{\jump{B_1}}{\Delta x}=\frac{\jump{z_6}}{\Delta x}. 
\end{equation}
The source term in cell $i$ contributes to interface $i+\tfrac{1}{2}$ and interface $i-\tfrac{1}{2}$. {\color{black}{We choose the source term to be of the form
\begin{equation}\label{sourceTermReminder1}
{\vec{s}}_{i+\tfrac{1}{2}} = \jump{z_6}\begin{bmatrix}0\\0\\0\\0\\0\\s_6\\s_7\\s_8\end{bmatrix},
\end{equation}
where we have extra degrees of freedom when selecting $s_6$, $s_7$, and $s_8$ in \eqref{sourceTermReminder1}. Then, the source term contribution is given by
\begin{equation}\label{sourceTermReminder2}
-\average{\Delta x \vec{v}}^T{\vec{s}}_{i+\tfrac{1}{2}} = -\jump{z_6}\left(\average{\Delta x z_1^2z_6}s_6 + \average{\Delta x z_1^2z_7}s_7 + \average{\Delta x z_1^2z_8}s_8\right).
\end{equation}}}
 
Every term in the discrete entropy conservation condition \eqref{componentJumpCondition} is now rewritten into linear jump components of the parameter vector $\vec{z}$. Though algebraically laborious this provides us with a set of eight equations for which we can determine the yet unknown components in the entropy conserving numerical flux. Next we gather the like terms of each jump component. Once we have grouped all the like terms for each linear jump it will become clear how to discretize the Janhunen source term in order to guarantee consistency.  Gathering terms from \eqref{LHSCondition}, \eqref{RHSCondition}, and \eqref{sourceTermReminder2} we determine the system of eight equations:
\begin{align}
\jump{z_1}:& \quad\frac{\gamma+1}{\gamma-1}\frac{f_1^*}{z_1^{\ln}} + f_2^*\average{z_2} + f_3^*\average{z_3} + f_4^*\average{z_4} -2 f_5^*\average{z_1}+ 2f_6^*\average{z_1}\average{z_6} + 2f_7^*\average{z_1}\average{z_7} \label{Jump1}\\\nonumber 
&\;+2f_8^*\average{z_1}\average{z_8} = \frac{\average{z_2}}{2}\left(\average{z_6^2}+\average{z_7^2}+\average{z_8^2}\right) - \average{z_2}\average{z_6^2} - \average{z_3}\average{z_6 z_7} - \average{z_4}\average{z_6 z_8},  \\[0.1cm]
\jump{z_2}:& \quad-f_1^*\average{z_2} + f_2^*\average{z_1} = \average{z_5} + \frac{\average{z_1}}{2}\left(\average{z_6^2}+\average{z_7^2}+\average{z_8^2}\right) - \average{z_1}\average{z_6^2},\label{Jump2}\\[0.1cm]
\jump{z_3}:& \quad-f_1^*\average{z_3} + f_3^*\average{z_1} = -\average{z_1}\average{z_6 z_7}, \label{Jump3}\\[0.1cm]
\jump{z_4}:& \quad-f_1^*\average{z_4} + f_4^*\average{z_1} = -\average{z_1}\average{z_6 z_8}, \label{Jump4}\\[0.1cm]
\jump{z_5}:& \quad\frac{f_1^*}{z_5^{\ln}} = \average{z_2}, \label{Jump5}\\[0.1cm]
\jump{z_6}:& \quad f_6^*\average{z_1^2} = -\average{z_1z_2}\average{z_6} -\average{z_1z_3}\average{z_7} -\average{z_1z_4}\average{z_8} {\color{black}{- \average{\Delta x z_1^2z_6}s_6 }}\label{Jump6}\\\nonumber
&\qquad\qquad\qquad {\color{black}{- \average{\Delta x z_1^2z_7}s_7 - \average{\Delta x z_1^2z_8}s_8}}, \\[0.1cm]
\jump{z_7}:& \quad f_7^*\average{z_1^2} = \average{z_1 z_2}\average{z_7} - \average{z_1 z_3}\average{z_6}, \label{Jump7} \\[0.1cm]
\jump{z_8}:& \quad f_8^*\average{z_1^2} =  \average{z_1 z_2}\average{z_8} - \average{z_1 z_4}\average{z_6}. \label{Jump8} 
\end{align}

With the collection of equations \eqref{Jump1} - \eqref{Jump8} we find a rather alarming result. We know that the sixth component of the physical flux for the ideal MHD system is zero, i.e. $f_6 = 0$. However, we have found in our entropy conservation condition that the sixth component of the numerical flux (computed from \eqref{Jump6}) is 
\begin{equation}\label{sixthComponentProblem}
\begin{aligned}
f_6^* &= -\frac{1}{\average{z_1^2}}\left(\average{z_1z_2}\average{z_6} +\average{z_1z_3}\average{z_7} +\average{z_1z_4}\average{z_8}+ \average{\Delta x z_1^2z_6}s_6\right.\\&\left.\;\;\;\;\;+ \average{\Delta x z_1^2z_7}s_7 + \average{\Delta x z_1^2z_8}s_8\right).
\end{aligned}
\end{equation}
In general we cannot guarantee that \eqref{sixthComponentProblem} will vanish. In one dimension the argument could be made that $\jump{z_6} = \jump{B_1} = 0$ (as it is constant) and there is, in fact, no issue. However, this assumption is too restrictive to discuss higher dimensional entropy conservative flux formul\ae. Our assumption from Sec. \ref{GoverningEquations} that $B_1\ne constant$ revealed extra terms which otherwise would have been hidden from the analysis. 

To remove this inconsistency introduced by the $\jump{z_6}$ equation we discretize the source term to cancel the problematic terms \eqref{sixthComponentProblem}. 
{\color{black}{We compare the structure of the extra terms in \eqref{sixthComponentProblem} and the degrees of freedom $s_6$, $s_7$, and $s_8$ to determine a consistent discretization to cancel the extraneous terms in the $\jump{z_6}$ equation in \eqref{sixthComponentProblem}:
\begin{equation}\label{SourceTermDiscinProof}
\begin{aligned}
s_6 &= -\frac{\average{z_1z_2}\average{z_6}}{ \average{\Delta x z_1^2z_6}},\\
s_7 &= -\frac{\average{z_1z_3}\average{z_7}}{ \average{\Delta x z_1^2z_7}},\\
s_8 &= -\frac{\average{z_1z_4}\average{z_8}}{ \average{\Delta x z_1^2z_8}}.
\end{aligned}
\end{equation}
The source term at the $i-1/2$ interface has an identical structure to \eqref{SourceTermDiscinProof}. We collect the total source term discretization in cell $i$ for clarity:
 \begin{equation}\label{SourceInI}
 \vec{s}_i = \frac{1}{2}\left(\vec{s}_{i+\tfrac{1}{2}} + \vec{s}_{i-\tfrac{1}{2}}\right),
 \end{equation}
where
\begin{equation}\label{SourceTermDiscinProof2}
\begin{aligned}
\vec{s}_{i+\tfrac{1}{2}} &=-\jump{B_1}_{i+\tfrac{1}{2}}\left[
0,\;
0,\;
0,\;
0,\;
0,\;
\frac{\average{z_1z_2}\average{z_6}}{\average{\Delta x z_1^2z_6}},\;
\frac{\average{z_1z_3}\average{z_7}}{\average{\Delta x z_1^2z_7}},\;
\frac{\average{z_1z_4}\average{z_8}}{\average{\Delta x z_1^2z_8}}
\right]_{i+\tfrac{1}{2}}^T,\\
\vec{s}_{i-\tfrac{1}{2}} &=-\jump{B_1}_{i-\tfrac{1}{2}}\left[
0,\;
0,\;
0,\;
0,\;
0,\;
\frac{\average{z_1z_2}\average{z_6}}{\average{\Delta x z_1^2z_6}},\;
\frac{\average{z_1z_3}\average{z_7}}{\average{\Delta x z_1^2z_7}},\;
\frac{\average{z_1z_4}\average{z_8}}{\average{\Delta x z_1^2z_8}}
\right]_{i-\tfrac{1}{2}}^T.
\end{aligned}
\end{equation}
It is straightforward to check the consistency of the source term discretization \eqref{SourceInI}. }}

We substitute the source term discretization \eqref{SourceTermDiscinProof} into the entropy constraint \eqref{Jump6} and find
that the source term components exactly cancel the extraneous terms in the $\jump{z_6}$ equation \eqref{sixthComponentProblem}. Thus, we recover a consistent term for the sixth numerical flux component and it is now true that 
\begin{equation}
f_6^* = 0.
\end{equation}
Finally, we are able solve the remaining seven equations \eqref{Jump1} - \eqref{Jump5}, \eqref{Jump7}, and \eqref{Jump8} for the numerical flux components:
\begin{align}
f_1^* &= \average{z_2} z_5^{\ln}, \label{flux1}\\[0.1cm] 
f_2^* &= \frac{\average{z_5}}{\average{z_1}} + \frac{\average{z_2}^2 z_5^{\ln}}{\average{z_1}} + \frac{1}{2}\left(\average{z_6^2}+\average{z_7^2}+\average{z_8^2}\right) - \average{z_6^2}, \label{flux2} \\[0.1cm] 
f_3^* &= \frac{\average{z_2}\average{z_3}z_5^{\ln}}{\average{z_1}} - \average{z_6 z_7}, \label{flux3}\\[0.1cm] 
f_4^* &= \frac{\average{z_2}\average{z_4}z_5^{\ln}}{\average{z_1}} - \average{z_6 z_8}, \label{flux4}\\[0.1cm] 
f_5^* &= \frac{\gamma}{\gamma-1}\frac{\average{z_2}}{\average{z_1}}\left(\frac{\gamma +1}{2\gamma}\frac{z_5^{\ln}}{z_1^{\ln}} + \frac{\gamma-1}{2\gamma}\frac{\average{z_5}}{\average{z_1}}\right) + \frac{\average{z_2}z_5^{\ln}}{2}\left(\frac{\average{z_2}^2}{\average{z_1}^2} + \frac{\average{z_3}^2}{\average{z_1}^2} + \frac{\average{z_4}^2}{\average{z_1}^2}\right) \label{flux5} \\ 
&\qquad +\frac{\average{z_7}}{\average{z_1^2}}\left(\average{z_1 z_2}\average{z_7} - \average{z_1 z_3}\average{z_6}\right) + \frac{\average{z_8}}{\average{z_1^2}}\left(\average{z_1 z_2}\average{z_8} - \average{z_1 z_4}\average{z_6}\right), \nonumber\label{flux6}\\[0.1cm] 
f_6^* &= 0, \\[0.1cm] 
f_7^* &= \frac{1}{\average{z_1^2}}\left( \average{z_1 z_2}\average{z_7} - \average{z_1 z_3}\average{z_6}\right),\label{flux7}\\[0.1cm] 
f_8^* &= \frac{1}{\average{z_1^2}}\left( \average{z_1 z_2}\average{z_8} - \average{z_1 z_4}\average{z_6}\right).\label{flux8}
\end{align}

The newly derived numerical flux \eqref{flux1} - \eqref{flux8} conserves the discrete entropy by construction. Next we verify that the numerical flux $\vec{f}^{*,ec}$ is consistent to the physical flux. It will make the demonstration of consistency more straightforward if we rewrite the fifth component of the physical flux, $f_5$, by substituting the definition of $\rho e$:
\begin{equation}
\begin{aligned}
f_5 = u(\rho e + p + \frac{1}{2}\|\vec{B}\|^2) - B_1(\vec{u}\cdot{\vec{B}}) &= u\left(\frac{\rho}{2}\|\vec{u}\|^2 + \frac{p}{\gamma-1} + \frac{1}{2}\|\vec{B}\|^2+ p + \frac{1}{2}\|\vec{B}\|^2\right) - B_1(uB_1 + vB_2 + wB_3), \\ 
&= \frac{\rho u}{2}\|\vec{u}\|^2 + \frac{\gamma u p}{\gamma -1} + uB_2^2 + uB_3^2 - vB_1B_2 - wB_1B_3.
\end{aligned}
\end{equation}
Now, if we assume that the left and right states are identical in the numerical flux \eqref{flux1} - \eqref{flux8}, we find that 
\begin{align}
f_1^* &\rightarrow \rho u &= f_1 \label{consistent1},\\[0.1cm] 
f_2^* &\rightarrow p + \rho u^2 + \frac{1}{2}\|\vec{B}\|^2 - B_1^2 &= f_2 \label{consistent2},\\[0.1cm] 
f_3^* &\rightarrow \rho uv - B_1 B_2 &= f_3 \label{consistent3},\\[0.1cm] 
f_4^* &\rightarrow \rho uw - B_1B_3 &= f_4 \label{consistent4},\\[0.1cm] 
f_5^* &\rightarrow \frac{\gamma u p}{\gamma-1} + \frac{\rho u}{2}\|\vec{u}\|^2 + uB_2^2 + uB_3^2 - vB_1B_2 - wB_1B_3 &= f_5 \label{consistent5},\\[0.1cm] 
f_6^* &\rightarrow 0 &= f_6 \label{consistent6},\\[0.1cm] 
f_7^* &\rightarrow uB_2 - vB_1 &= f_7 \label{consistent7},\\[0.1cm] 
f_8^* &\rightarrow uB_3 - wB_1 &= f_8 \label{consistent8}.
\end{align}
Thus, we have shown that the numerical flux given by \eqref{flux1} - \eqref{flux8} is consistent and entropy conservative.

Though the parametrization vector $\vec{z}$ was necessary to develop the entropy conserving numerical flux it obfuscates the information about which quantities are being averaged and their contribution in the numerical flux. It is therefore convenient to define how the primitive variables, through averages, are related to the parametrized numerical flux given by \eqref{flux1} - \eqref{flux8}. We select a computationally efficient averaging procedure identical to that of Ismail and Roe \cite{ismail2009} for the hydrodynamic variables, however the magnetic field terms will necessitate extra variable definitions:
\begin{equation}
\begin{aligned}
&\hat{\rho} = \average{z_1}z_5^{\ln},\;\;\hat{u}_1=\frac{\average{z_2}}{\average{z_1}},\;\; \hat{v}_1=\frac{\average{z_3}}{\average{z_1}},\;\;\hat{w}_1 = \frac{\average{z_4}}{\average{z_1}},\;\; \hat{p}_1 = \frac{\average{z_5}}{\average{z_1}},\;\;\hat{p}_2 = \frac{\gamma+1}{2\gamma}\frac{z_5^{\ln}}{z_1^{\ln}} + \frac{\gamma-1}{2\gamma}\frac{\average{z_5}}{\average{z_1}}, \\[0.1cm]
&\qquad\hat{u}_2 = \frac{\average{z_1 z_2}}{\average{z_1^2}},\;\;\hat{v}_2 = \frac{\average{z_1 z_3}}{\average{z_1^2}},\;\;\hat{w}_2 = \frac{\average{z_1 z_4}}{\average{z_1^2}},\;\;\hat{B_1} = \average{z_6},\;\;\hat{B_2} = \average{z_7},\;\;\hat{B_3} = \average{z_8}, \\[0.1cm]
&\qquad\;\;\;\accentset{\circ}{B_1} = \average{z_6^2},\;\;\accentset{\circ}{B_2} = \average{z_7^2},\;\;\accentset{\circ}{B_3} = \average{z_8^2},\;\;\widehat{B_1 B_2} = \average{z_6 z_7},\;\;\widehat{B_1B_3} = \average{z_6 z_8},
\end{aligned}
\end{equation}
Then we can write, respectively, the entropy conserving flux and source term discretization in the more illuminating forms
\begin{equation}\label{MHDEntConsinProof}
\vec{f}^{*,ec} = \begin{bmatrix}
\hat{\rho}\hat{u}_1 \\
\hat{p}_1 + \hat{\rho}\hat{u}_1^2 + \frac{1}{2}\left(\accentset{\circ}{B_1}+\accentset{\circ}{B_2}+\accentset{\circ}{B_3}\right) - \accentset{\circ}{B_1} \\
\hat{\rho}\hat{u}_1\hat{v}_1 - \widehat{B_1 B_2} \\ 
\hat{\rho}\hat{u}_1\hat{w}_1 -\widehat{B_1 B_3} \\
\frac{\gamma \hat{u}_1\hat{p}_2}{\gamma -1} + \frac{\hat{\rho}\hat{u}_1}{2}(\hat{u}_1^2 + \hat{v}_1^2 + \hat{w}_1^2) + \hat{u}_2\left(\hat{B}_2^2 +\hat{B}_3^2\right)- \hat{B_1}\left(\hat{v}_2\hat{B_2} +\hat{w}_2\hat{B_3}\right) \\
0 \\
\hat{u}_2\hat{B_2} - \hat{v}_2\hat{B_1} \\
\hat{u}_2\hat{B_3} - \hat{w}_2\hat{B_1}
\end{bmatrix},
\end{equation}
and
\begin{equation}\label{SourceTermDiscat the End}
\vec{s}_i ={\color{black}{\frac{1}{2}}}\left( \vec{s}_{i+\tfrac{1}{2}} + \vec{s}_{i-\tfrac{1}{2}}\right) = -\frac{1}{2}\left(
\jump{B_1}_{i+\tfrac{1}{2}}\begin{bmatrix}
0\\
0\\
0\\
0\\
0\\
\frac{\average{z_1 z_2}\hat{B_1}}{\average{\Delta x z_1^2 B_1}}\\[0.15cm]
\frac{\average{z_1 z_3}\hat{B_2}}{\average{\Delta x z_1^2 B_2}}\\[0.15cm]
\frac{\average{z_1 z_4}\hat{B_3}}{\average{\Delta x z_1^2 B_3}}
\end{bmatrix}_{i+\tfrac{1}{2}}
+
\jump{B_1}_{i-\tfrac{1}{2}}\begin{bmatrix} 
0\\
0\\
0\\
0\\
0\\
\frac{\average{z_1 z_2}\hat{B_1}}{\average{\Delta x z_1^2 B_1}}\\[0.15cm]
\frac{\average{z_1 z_3}\hat{B_2}}{\average{\Delta x z_1^2 B_2}}\\[0.15cm]
\frac{\average{z_1 z_4}\hat{B_3}}{\average{\Delta x z_1^2 B_3}}
\end{bmatrix}_{i-\tfrac{1}{2}}
\right).
\end{equation}
\end{proof}

\begin{rem} \textit{(Consistency with Entropy Conserving Euler Flux)}
If the flow occurs in a medium without a magnetic field then the ideal MHD model becomes the compressible Euler equations. The structure of $\vec{f}^{*,ec}$ \eqref{MHDEntConsinProof} can be separated into an Euler component and magnetic field component, i.e.,
\begin{equation}
\vec{f}^{*,ec} = \begin{bmatrix}
\hat{\rho}\hat{u}_1 \\
\hat{p}_1 + \hat{\rho}\hat{u}_1^2\\
\hat{\rho}\hat{u}_1\hat{v}_1  \\ 
\hat{\rho}\hat{u}_1\hat{w}_1\\
\frac{\gamma \hat{u}_1\hat{p}_2}{\gamma -1} + \frac{\hat{\rho}\hat{u}_1}{2}(\hat{u}_1^2 + \hat{v}_1^2 + \hat{w}_1^2) \\
0 \\
0 \\
0
\end{bmatrix}
+
\begin{bmatrix}
0 \\
\frac{1}{2}\left(\accentset{\circ}{B_1}+\accentset{\circ}{B_2}+\accentset{\circ}{B_3}\right) - \accentset{\circ}{B_1} \\
- \widehat{B_1 B_2} \\ 
-\widehat{B_1 B_3} \\
\hat{u}_2\left(\hat{B}_2^2 +\hat{B}_3^2\right)- \hat{B_1}\left(\hat{v}_2\hat{B_2} +\hat{w}_2\hat{B_3}\right) \\
0 \\
\hat{u}_2\hat{B_2} - \hat{v}_2\hat{B_1} \\
\hat{u}_2\hat{B_3} - \hat{w}_2\hat{B_1}
\end{bmatrix}.
\end{equation}
Thus, if the magnetic field components are zero we find 
\begin{equation}
\vec{f}^{*,ec} = \begin{bmatrix}
\hat{\rho}\hat{u}_1 \\
\hat{p}_1 + \hat{\rho}\hat{u}_1^2\\
\hat{\rho}\hat{u}_1\hat{v}_1  \\ 
\hat{\rho}\hat{u}_1\hat{w}_1\\
\frac{\gamma \hat{u}_1\hat{p}_2}{\gamma -1} + \frac{\hat{\rho}\hat{u}_1}{2}(\hat{u}_1^2 + \hat{v}_1^2 + \hat{w}_1^2) \\
0 \\
0 \\
0
\end{bmatrix}
+
\begin{bmatrix}
0 \\
0\\
0\\
0\\
0 \\
0\\
0\\
0
\end{bmatrix}.
\end{equation}
and $\vec{f}^{*,ec}$ becomes the entropy conserving flux for the Euler equations described by Ismail \& Roe \cite{ismail2009}. {\color{black}{This separation of the Euler components and magnetic components is useful as it grants flexibility in the underlying flux for the hydrodynamic components. In \ref{EKEP}, inspired by the work of Chandrashekar \cite{chandrashekar2013}, we outline an alternative numerical flux that is entropy and kinetic energy conserving.}}
\end{rem}

\begin{rem} \textit{(Multi-Dimensional Entropy Conserving Fluxes)}
A similar form of the proof of entropy conservation for the flux $\vec{f}$ in the $x-$direction can be used to derive the entropy conservative fluxes for the ideal MHD equations in $y$ and $z-$directions. Full details are given in \ref{3DFluxes} of this paper.
\end{rem}

\subsection{Dissipation Terms for an Entropy Stable Flux}\label{Sec:StableFlux}

To create an entropy stable numerical flux function we use the entropy conserving flux \eqref{Eq:entropyconservative} as a base and subtract a general form of numerical dissipation, e.g,
\begin{equation}\label{dissipation}
\vec{f}^* = \vec{f}^{*,ec} - \frac{1}{2}\matrix{D}\jump{\vec{q}},
\end{equation}
where $\matrix{D}$ is a dissipation matrix. Of utmost concern for entropy stability of the approximation is to formulate the dissipation term \eqref{dissipation} such that it is guaranteed to cause a negative contribution in the discrete entropy equation. {\color{black}{To guarantee entropy stability we will reformulate the dissipation term \eqref{dissipation} to incorporate the jump in the entropy variables (rather than the jump in conservative variables) \cite{barth99}.}} The remainder of this section is divided as follows: we will select a specific form for the dissipation matrix $\matrix{D}$ in Sec. \ref{Sec:DissMat}. Next, in Sec. \ref{Sec:Eigen}, we examine the eigenstructure of the particular dissipation matrix chosen. Finally, Sec. \ref{Sec:EntropySclaed} presents a specific entropy scaling on the eigenvectors of the dissipation matrix to guarantee the negativity of the dissipation term. 

\subsubsection{The Dissipation Matrix}\label{Sec:DissMat}

First, we select the dissipation matrix to be $|\widehat{\matrix{A}}|$ that is the absolute value of the flux Jacobian for the ideal MHD 8-wave formulation in the $x-$direction:
\begin{equation}\label{fluxJacobianConservativeVars}
\widehat{\matrix{A}}\coloneqq \vec{f}_{\vec{q}} + \matrix{P} = \matrix{A} + \matrix{P},
\end{equation}
where $\matrix{A}$ is the flux Jacobian for the homogeneous ideal MHD equations and $\matrix{P}$ is the Powell source term \cite{powell1994} written in matrix form, i.e.,
\begin{equation}\label{PowellMatrix}
\matrix{P}\pderivative{\vec{q}}{x} = \begin{bmatrix}
0 & 0 & 0 & 0 & 0 & 0 & 0 & 0 \\
0 & 0 & 0 & 0 & 0 & B_1 & 0 & 0 \\
0 & 0 & 0 & 0 & 0 & B_2 & 0 & 0 \\
0 & 0 & 0 & 0 & 0 & B_3 & 0 & 0 \\
0 & 0 & 0 & 0 & 0 & \vec{u}\cdot\vec{B} & 0 & 0 \\
0 & 0 & 0 & 0 & 0 & u & 0 & 0 \\
0 & 0 & 0 & 0 & 0 & v & 0 & 0 \\
0 & 0 & 0 & 0 & 0 & w & 0 & 0 
\end{bmatrix}
\pderivative{}{x}\begin{bmatrix}
\rho \\
\rho u\\
\rho v \\
\rho w\\
\rho e \\
B_1\\
B_2\\
B_3
\end{bmatrix} = \pderivative{B_1}{x}\begin{bmatrix}
0 \\
B_1 \\
B_2\\
B_3\\
\vec{u}\cdot\vec{B} \\
u\\
v\\
w
\end{bmatrix}.
\end{equation}
The fact that we force the positivity of $\matrix{D} = |\widehat{\matrix{A}}|$ does not guarantee that the dissipative term \eqref{dissipation}
is negative \cite{barth99}. Thus, in the remaining sections we will motivate a reformulation of the dissipation term \eqref{dissipation} in order to restore negativity.

It is important to distinguish that we {\color{black}{use}} the Janhunen source term \eqref{1DIdealMHDwithSource} to {\color{black}{derive an}} entropy {\color{black}{conservative numerical flux function}} in Sec. \ref{Sec:EntropyConservingFlux}. However, to design an entropy stable approximation we require {\color{black}{that}} the {\color{black}{eigendecomposition of the}} flux Jacobian matrix {\color{black}{can be related to the entropy Jacobian \eqref{entropyJacobian}. This particular scaling, first examined by Merriam \cite{merriam1989} and explored more thoroughly by Barth \cite{barth99}, requires that the PDE system is symmetrizable.}} Previous {\color{black}{analysis of}} the ideal MHD equations \cite{barth99,godunov1972} have demonstrated that the Powell source term is necessary to restore a symmetric MHD system. We {\color{black}{reiterate}} that the altered flux Jacobian is used only to derive the dissipation term. {\color{black}{Just as Lax-Friedrichs differs from Roe in the construction of a dissipation term, we use the Powell source term only to build our dissipation term.}} Thus, no inconsistency with the previous {\color{black}{entropy conserving flux}} derivations is introduced.

\subsubsection{The Eigenstructure of the Matrix $\widehat{\matrix{A}}$}\label{Sec:Eigen}
The background discussion of the eigenstructure of the augmented flux Jacobian matrix $\widehat{\matrix{A}}$ is algebraically intense, so for clarity we divide it into the following steps:
\begin{enumerate}
\item We compute the eigendecomposition for the symmetrizable MHD system written in the primitive variables.
\item We use previous results from Roe and Balsara \cite{roe1996} and rescale the eigenvectors to remove degeneracies.
\item We recover the, now stabilized, eigendecomposition for the matrix $\widehat{\matrix{A}}$.
\end{enumerate}

To discuss the eigenstructure of the matrix $\widehat{\matrix{A}}$ it is easiest to work with primitive variables, which we denote $\vec{p_r}$, and convert back to conservative variables, denoted by $\vec{q}$, when necessary. We first write the ideal MHD system modified by the Powell source term in terms of the conservative variables
\begin{equation}\label{MHDPowellSystem}
\pderivative{\vec{q}}{t} + \widehat{\matrix{A}}\pderivative{\vec{q}}{x} = 0.
\end{equation}
We are free to move between primitive and conservative variables in the system \eqref{MHDPowellSystem} with the matrix
\begin{equation}\label{MMatrix}
\matrix{M} = \vec{q}_{\vec{p_r}} = 
\begin{bmatrix}
1 & 0 & 0 & 0 & 0 & 0 & 0 & 0 \\
u & \rho & 0 & 0 & 0 &0&  0 & 0 \\
v & 0 & \rho & 0 & 0 &0& 0 & 0 \\
w & 0 & 0 & \rho & 0 & 0 &0& 0 \\
\frac{\|u\|^2}{2} & \rho u & \rho v & \rho w & \frac{1}{\gamma - 1} &B_1& B_2& B_3 \\
0 & 0 & 0 & 0 & 0 & 1 & 0&0 \\
0 & 0 & 0 & 0 & 0 & 0 &1 &0 \\
0 & 0 & 0 & 0 & 0 & 0 & 0&1 
\end{bmatrix},
\end{equation}
and from conservative to primitive variables with $\matrix{M}^{-1}$. Then we can rewrite the system \eqref{MHDPowellSystem} in terms of the primitive variables $\vec{p_r}$ 
\begin{equation}\label{MHDPowellPrim}
\pderivative{\vec{p_r}}{t} + \widehat{\matrix{B}}\pderivative{\vec{p_r}}{x} = 0,
\end{equation}
where 
\begin{equation}\label{BtoA}
\widehat{\matrix{B}} = \matrix{M}^{-1}\widehat{\matrix{A}}\matrix{M}.
\end{equation}

To describe the eigenstructure of the 8-wave ideal MHD system flux Jacobian in conservative variables $\widehat{\matrix{A}}$ we first investigate the eigendecompostion of $\widehat{\matrix{B}}$, the flux Jacobian in primitive variables:
\begin{equation}\label{Bmatrix}
\widehat{\matrix{B}} =
\begin{bmatrix}
u & \rho & 0 & 0 & 0 &0& 0 & 0 \\
0 & u & 0 & 0 & \frac{1}{\rho}&0&\frac{B_2}{\rho} & \frac{B_3}{\rho}  \\
0 & 0 & u & 0 & 0&0& -\frac{B_1}{\rho} & 0  \\
0 & 0 & 0 & u & 0 &0&0& -\frac{B_1}{\rho}  \\
0 & \gamma p & 0 & 0 & u &0& 0 & 0 \\
0 & 0 & 0 & 0 & 0&u& 0 & 0 \\
0 & B_2 & -B_1 & 0 &0& 0 & u & 0 \\
0 & B_3 & 0 & -B_1 & 0&0 & 0& u
\end{bmatrix}.
\end{equation}
From \eqref{BtoA} we see that we can convert the resulting eigendecomposition to conservative variables with straightforward matrix multiplication and the identity
\begin{equation}
\widehat{\matrix{A}} = \matrix{M}\widehat{\matrix{B}}\matrix{M}^{-1}.
\end{equation}
So, once we compute the eigendecomposition
\begin{equation}
\widehat{\matrix{B}} = \matrix{R}\widehat{\boldsymbol{\Lambda}}\matrix{R}^{-1},
\end{equation}
we can recover the eigendecomposition of the flux Jacobian matrix in conservative variables as
\begin{equation}
\widehat{\matrix{A}} = \matrix{M}\widehat{\matrix{B}}\matrix{M}^{-1} = \matrix{M}\matrix{R}\widehat{\boldsymbol{\Lambda}}\matrix{R}^{-1}\matrix{M}^{-1}=\widehat{\matrix{R}}\widehat{\boldsymbol{\Lambda}}\widehat{\matrix{R}}^{-1},\quad {\color{black}{\widehat{\matrix{R}} = \matrix{M}\matrix{R}}}.
\end{equation}

We begin with a naively scaled eigendecomposition of the matrix $\widehat{\matrix{B}}$. The 8-wave formulation supports eight traveling wave solutions with eigenvalues
\begin{equation}\label{eigenvalues}
\lambda_{\pm f} = u \pm c_f,\quad \lambda_{\pm s} =u \pm c_s,\quad \lambda_{\pm a} = u+c_a,\quad \lambda_{E} =u,\quad \lambda_{D} =u,
\end{equation}
where $c_f$, $c_s$ are the fast and slow magnetoacoustic wave speeds and $c_a$ is the Alfv\'{e}n wave speed. The double eigenvalue $u$ represent the entropy wave and divergence wave. The divergence wave is a direct result of including the Powell source. As we previously mentioned, the Powell source term turns the divergence wave into an advected scalar which is directly reflected in the eigenstructure. The values for the characteristic wave speeds may be written as 
\begin{equation}\label{characteristicSpeeds}
c_a^2 = b_1^2,\qquad c_{f,s}^2 = \frac{1}{2}(a^2+b^2)\pm\frac{1}{2}\sqrt{(a^2+b^2)^2 - 4a^2b_1^2},
\end{equation}
with the conventional notation
\begin{equation}
\vec{b} = \frac{\vec{B}}{\sqrt{\rho}},\quad b^2=b_1^2+b_2^2+b_3^2,\quad b_{\perp}^2 = b_2^2+b_3^2,\quad a^2 = \frac{p\gamma}{\rho}.
\end{equation}
In \eqref{characteristicSpeeds} the plus sign is for the fast speed $c_f$ and minus sign is the slow speed $c_s$. We also have the complete set of naively scaled eigenvectors (as presented in \cite{barth99})
\begin{itemize}
\item[] \underline{Entropy and Divergence Waves}: $\lambda_{E,D} = u$
\begin{equation}\label{entropyB}
\vec{r}_E = \begin{bmatrix} 1 \\ 0 \\0 \\0 \\0 \\0 \\0 \\0 \end{bmatrix},\quad\vec{r}_D = \begin{bmatrix} 0 \\ 0 \\0 \\0 \\0 \\1 \\0 \\0 \end{bmatrix},
\end{equation}
\item[] \underline{Alfv\'{e}n Waves}: $\lambda_{\pm a} = u\pm b_1$ 
\begin{equation}\label{AlfvenwaveB}
\vec{r}_{\pm a} = \begin{bmatrix}
0 \\
0 \\
\mp B_3 \\
\pm B_2 \\
0 \\
0 \\
-\sqrt{\rho} B_3 \\
\sqrt{\rho}B_2
\end{bmatrix},
\end{equation}
\item[]\underline{Magnetoacoustic Waves}: $\lambda_{\pm f,\pm s} = u\pm c_{f,s}$ 
\begin{equation}\label{MHDB}
\vec{r}_{\pm f} = \begin{bmatrix}
\rho \\[0.1cm]
\pm c_{f} \\[0.1cm]
\mp \frac{c_{f}b_1b_2}{c_{f}^2-b_1^2} \\[0.1cm]
\mp \frac{c_{f}b_1b_3}{c_{f}^2-b_1^2} \\[0.1cm]
\rho a^2 \\[0.1cm]
0 \\[0.1cm]
\frac{\sqrt{\rho}c_{f}^2b_2}{c_{f}^2-b_1^2} \\[0.1cm]
\frac{\sqrt{\rho}c_{f}^2b_3}{c_{f}^2-b_1^2} 
\end{bmatrix},
\qquad
\vec{r}_{\pm s} = \begin{bmatrix}
\rho \\[0.1cm]
\pm c_{s} \\[0.1cm]
\mp \frac{c_{s}b_1b_2}{c_{s}^2-b_1^2} \\[0.1cm]
\mp \frac{c_{s}b_1b_3}{c_{s}^2-b_1^2} \\[0.1cm]
\rho a^2 \\[0.1cm]
0 \\[0.1cm]
\frac{\sqrt{\rho}c_{s}^2b_2}{c_{s}^2-b_1^2} \\[0.1cm]
\frac{\sqrt{\rho}c_{s}^2b_3}{c_{s}^2-b_1^2} 
\end{bmatrix}.
\end{equation}
\end{itemize}
In this form the magnetoacoustic eigenvectors exhibit several forms of degeneracy that are carefully described by Roe and Balsara \cite{roe1996}.

We follow the same rescaling procedure of Roe and Balsara for the fast/slow magnetoacoustic eigenvectors \eqref{MHDB}. The algebra is simplified greatly if we introduce the parameters
\begin{equation}\label{rescaleParams}
\alpha_f^2 = \frac{a^2 - c_s^2}{c_f^2 - c_s^2},\quad\alpha_s^2 = \frac{c_f^2 - a^2}{c_f^2 - c_s^2}.
\end{equation}
The parameters \eqref{rescaleParams} have several useful properties:
\begin{equation}\label{rescaleIdent}
\alpha_f^2+\alpha_s^2 = 1,\quad \alpha_f^2c_f^2 + \alpha_s^2c_s^2 = a^2,\quad \alpha_f\alpha_s = \frac{a^2b_{\perp}}{c_f^2-c_s^2}.
\end{equation}
The parameters $\alpha_{f,s}$ measure how closely the fast/slow waves approximate the behavior of acoustic waves \cite{roe1996}. In the rescaling process we utilize several identities that arise from the quartic equation for the magnetoacoustic wave speeds $\pm c_{f,s}$
\begin{equation}
c^4 -(a^2+b^2)c^2 + a^2b_1^2 = 0,
\end{equation}
which are
\begin{equation}\label{identities2}
\begin{aligned}
c_fc_s &= a|b_1|, \\[0.1cm] c_f^2 + c_s^2 &= a^2 + b^2, \\[0.1cm] c_{f,s}^4-a^2b_1^2 &= c_{f,s}^2\left(c_{f,s}^2 - c_{s,f}^2\right),\\[0.1cm] \left(c_{f,s}^2 - a^2\right)\left(c_{f,s}^2 - b_1^2\right) &= c_{f,s}^2b_{\perp}^2.
\end{aligned}
\end{equation}
Applying the identities \eqref{rescaleIdent} and \eqref{identities2} we rewrite the eigenvectors for the fast/slow waves in a more stable form in terms of the parameters $\alpha_{f,s}$. We also rewrite the Alfv\'{e}n wave vectors in terms of the variable $\vec{b}$ for convenience:
\begin{itemize}
\item[] \underline{Entropy and Divergence Waves}: $\lambda_{E,D} = u$
\begin{equation}\label{entropyR}
\vec{r}_E = \begin{bmatrix} 1 \\ 0 \\0 \\0 \\0 \\0 \\0 \\0 \end{bmatrix},\quad\vec{r}_D = \begin{bmatrix} 0 \\ 0 \\0 \\0 \\0 \\1 \\0 \\0 \end{bmatrix},
\end{equation}
\item[] \underline{Alfv\'{e}n Waves}: $\lambda_{\pm a} = u\pm b_1$ 
\begin{equation}\label{AlfvenR}
\vec{r}_{\pm a} = \begin{bmatrix}
0 \\
0 \\
\mp \sqrt{\rho}\,b_3 \\
\pm \sqrt{\rho}\,b_2 \\
0 \\
0 \\
-\rho b_3 \\
\rho b_2
\end{bmatrix},
\end{equation}
\item[] \underline{Magnetoacoustic Waves}: $\lambda_{\pm f,\pm s} = u\pm c_{f,s}$ 
\begin{equation}\label{MHDR}
\vec{r}_{\pm f} = \begin{bmatrix}
\alpha_f\rho \\[0.1cm]
\pm \alpha_f c_{f} \\[0.1cm]
\mp \alpha_s c_s \beta_2 sgn(b_1)\\[0.1cm]
\mp \alpha_s c_s \beta_3 sgn(b_1) \\[0.1cm]
\alpha_f \rho a^2 \\[0.1cm]
0 \\[0.1cm]
\alpha_s a \beta_2 \sqrt{\rho} \\[0.1cm]
\alpha_s a \beta_3 \sqrt{\rho} 
\end{bmatrix},
\qquad 
\vec{r}_{\pm s} = \begin{bmatrix}
\alpha_s\rho \\[0.1cm]
\pm \alpha_s c_s \\[0.1cm]
\pm \alpha_f c_f \beta_2 sgn(b_1)\\[0.1cm]
\pm \alpha_f c_f \beta_3 sgn(b_1) \\[0.1cm]
\alpha_s \rho a^2 \\[0.1cm]
0 \\[0.1cm]
-\alpha_f a \beta_2 \sqrt{\rho} \\[0.1cm]
-\alpha_f a \beta_3 \sqrt{\rho} 
\end{bmatrix},
\end{equation}
\end{itemize}
where the normalized, tangential magnetic field components are given by
\begin{equation}
\qquad
\beta_{2,3} = \frac{b_{2,3}}{b_{\perp}}.
\end{equation}
These quantities $\beta_{2,3}$ are indeterminate if $b_{\perp}\approx 0$. It was suggested in \cite{brio1988} that, in this degenerate case, the value of $\beta$ may be given arbitrarily, e.g., $1/\sqrt{2}$. Such a definition preserves the orthogonality of the eigenvectors. We denote the matrix of rescaled right eigenvectors $\matrix{R}$ of $\widehat{\matrix{B}}$ with columns given by the vectors $\eqref{entropyR} -\eqref{MHDR}$ in the following order
\begin{equation}\label{rightEV1}
{\matrix{R}} = \Big[ {\,\vec{r}}_{-f} \big|\,{\vec{r}}_{-a} \big|\,{\vec{r}}_{-s} \big|\,{\vec{r}}_{E} \big|\,{\vec{r}}_{D} \big|\,{\vec{r}}_{+s} \big|\,{\vec{r}}_{+a} \big|\,{\vec{r}}_{+f} \Big],
\end{equation}
and ${\matrix{L}} = {\matrix{R}}^{-1}$.

Recall that the right eigenvectors for the matrix $\widehat{\matrix{A}}$ are given by $\widehat{\matrix{R}} = \matrix{M}\matrix{R}$. That is, we simply multiply the matrix $\matrix{M}$ \eqref{MMatrix} and the matrix of rescaled right eigenvectors \eqref{rightEV1}. We summarize the simplified right eigenvectors for $\widehat{\matrix{R}} = \matrix{M}\matrix{R}$:
\begin{itemize}
\item[] \underline{Entropy and Divergence Waves}: $\lambda_{E,D} = u$
\begin{equation}\label{entropyAS}
\widehat{\vec{r}}_E = \begin{bmatrix} 1 \\ u \\v \\w \\ \frac{\|\vec{u}\|^2}{2} \\[0.05cm]0 \\0 \\0 \end{bmatrix},\quad\widehat{\vec{r}}_D = \begin{bmatrix} 0 \\ 0 \\0 \\0 \\\sqrt{\rho}\,b_1 \\[0.05cm]1 \\0 \\0 \end{bmatrix},
\end{equation}
\item[] \underline{Alfv\'{e}n Waves}: $\lambda_{\pm a} = u\pm b_1$ 
\begin{equation}\label{AlfvenAS}
\widehat{\vec{r}}_{\pm a} = \begin{bmatrix}
0 \\
0 \\
\mp \rho^{\frac{3}{2}}\,b_3 \\
\pm \rho^{\frac{3}{2}}\,b_2 \\
\pm \rho^{\frac{3}{2}}(b_2 w - b_3 v) \\
0 \\
-\rho b_3 \\
\rho b_2
\end{bmatrix},
\end{equation}
\item[] \underline{Magnetoacoustic Waves}: $\lambda_{\pm f,\pm s} = u\pm c_{f,s}$ 
\begin{equation}\label{MHDAS}
\widehat{\vec{r}}_{\pm f} = \begin{bmatrix}
\alpha_f\rho \\[0.1cm]
\alpha_f\rho(u \pm c_{f}) \\[0.1cm]
\rho\left(\alpha_f v \mp \alpha_s c_s \beta_2 sgn(b_1) \right) \\[0.1cm]
\rho\left(\alpha_f w \mp \alpha_s c_s \beta_3 sgn(b_1) \right) \\[0.1cm]
\Psi_{\pm f} \\[0.1cm]
0 \\[0.1cm]
\alpha_s a \beta_2 \sqrt{\rho} \\[0.1cm]
\alpha_s a \beta_3 \sqrt{\rho} 
\end{bmatrix},
\qquad 
\widehat{\vec{r}}_{\pm s} = \begin{bmatrix}
\alpha_s\rho \\[0.1cm]
\alpha_s\rho\left(u \pm c_s\right) \\[0.1cm]
\rho\left(\alpha_s v \pm \alpha_f c_f \beta_2 sgn(b_1)\right) \\[0.1cm]
\rho\left(\alpha_s w \pm \alpha_f c_f \beta_3 sgn(b_1)\right) \\[0.1cm]
\Psi_{\pm s} \\[0.1cm]
0 \\[0.1cm]
-\alpha_f a \beta_2 \sqrt{\rho} \\[0.1cm]
-\alpha_f a \beta_3 \sqrt{\rho} 
\end{bmatrix},
\end{equation}
where we introduce the auxiliary variables
\begin{equation}
\begin{aligned}
\Psi_{\pm f} &= \frac{\alpha_f\rho}{2}\|\vec{u}\|^2 + a\alpha_s\rho b_{\perp} + \frac{\alpha_f\rho a^2}{\gamma - 1} \pm \alpha_f c_f \rho u \mp \alpha_s c_s \rho\,sgn(b_1)\left(v\beta_2 + w\beta_3\right), \\ 
\Psi_{\pm s} &= \frac{\alpha_s\rho}{2}\|\vec{u}\|^2 - a\alpha_f\rho b_{\perp} + \frac{\alpha_s\rho a^2}{\gamma-1} \pm \alpha_s c_s \rho u \pm \alpha_f c_f \rho\,sgn(b_1)\left(v\beta_2 + w\beta_3\right).
\end{aligned}
\end{equation}
\end{itemize}
For convenience in later derivations, we denote the matrix of right eigenvectors $\widehat{\matrix{R}}$ with columns given by the vectors $\eqref{entropyAS} -\eqref{MHDAS}$ in the following order
\begin{equation}\label{rightEV}
\widehat{\matrix{R}} = \Big[ \hat{\,\vec{r}}_{-f} \big|\,\hat{\vec{r}}_{-a} \big|\,\hat{\vec{r}}_{-s} \big|\,\hat{\vec{r}}_{E} \big|\,\hat{\vec{r}}_{D} \big|\,\hat{\vec{r}}_{+s} \big|\,\hat{\vec{r}}_{+a} \big|\,\hat{\vec{r}}_{+f} \Big],
\end{equation}
and $\widehat{\matrix{L}} = \widehat{\matrix{R}}^{-1}$.

\subsubsection{Entropy Scaled Right Eigenvectors}\label{Sec:EntropySclaed}

Now we have a symmetrizable matrix $\widehat{\matrix{A}}$ with a complete set of eigenvalues and right eigenvectors. We next utilize a previous result from Barth \cite{barth99} which provides a systematic approach to restructure a general eigenvalue problem to a symmetric eigenvalue problem. To do so we rescale the right eigenvectors of an eigendecomposition with respect to a right symmetrizer matrix in the following way:
\begin{lem}(Eigenvector Scaling)
Let $\matrix{A}\in\mathbb{R}^{n\times n}$ be an arbitrary diagonalizable matrix and $S$ the set of all right symmetrizers:
\begin{equation}
S=\left\{\matrix{B}\in\mathbb{R}^{n\times n}\,\big|\;\matrix{B}\;is\; s.p.d,\;\;\matrix{AB} = (\matrix{AB})^T\right\}.
\end{equation}
Further, let $\matrix{R}\in\mathbb{R}^{n\times n}$ denote the right eigenvector matrix which diagonalizes $\matrix{A}$, i.e., $\matrix{A}=\matrix{R}\boldsymbol\Lambda\matrix{R}^{-1}$, with $r$ distinct eigenvalues. Then for each $\matrix{B}\in S$ there exists a symmetric block diagonal matrix $\matrix{T}$ that block scales columns of $\matrix{R}$, {\color{black}{$\widetilde{\matrix{R}} = \matrix{RT}$}}, such that
\begin{equation}
\matrix{B}=\widetilde{\matrix{R}}\widetilde{\matrix{R}}^T,\; \matrix{A}=\widetilde{\matrix{R}}\boldsymbol\Lambda\widetilde{\matrix{R}}^{-1},
\end{equation}
which implies
\begin{equation}
\matrix{AB}=\widetilde{\matrix{R}}\boldsymbol\Lambda\widetilde{\matrix{R}}^{T}.
\end{equation}
\end{lem}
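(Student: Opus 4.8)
The plan is to pass to the coordinate system defined by the eigenvectors and thereby reduce the claim to a single statement about a congruence-transformed matrix. First I would set
\begin{equation}
\matrix{C} \coloneqq \matrix{R}^{-1}\matrix{B}\,\matrix{R}^{-T},\qquad \matrix{R}^{-T}=(\matrix{R}^{-1})^T,
\end{equation}
which is symmetric positive definite because it is a congruence of the s.p.d.\ matrix $\matrix{B}$. Since any admissible factor must satisfy $\matrix{B} = \widetilde{\matrix{R}}\widetilde{\matrix{R}}^T = \matrix{R}\matrix{T}\matrix{T}^T\matrix{R}^T$, the entire problem collapses to producing a \emph{symmetric, block diagonal} matrix $\matrix{T}$ with $\matrix{T}\matrix{T}^T = \matrix{C}$, where the block structure is aligned with the grouping of the columns of $\matrix{R}$ by their repeated eigenvalues.

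The hard part, and the step that actually makes this block structure available, is to show that $\matrix{C}$ is itself block diagonal with respect to the $r$ distinct eigenvalues; everything after that is bookkeeping. For this I would exploit the symmetrizer hypothesis $\matrix{AB} = (\matrix{AB})^T$. Conjugating and using $\matrix{R}^{-1}\matrix{A}\matrix{R} = \boldsymbol\Lambda$ gives
\begin{equation}
\matrix{R}^{-1}(\matrix{AB})\,\matrix{R}^{-T} = \left(\matrix{R}^{-1}\matrix{A}\matrix{R}\right)\left(\matrix{R}^{-1}\matrix{B}\,\matrix{R}^{-T}\right) = \boldsymbol\Lambda\matrix{C}.
\end{equation}
Because $\matrix{AB}$ is symmetric, so is its congruence $\boldsymbol\Lambda\matrix{C}$, and since $\matrix{C}$ and $\boldsymbol\Lambda$ are each symmetric this forces $\boldsymbol\Lambda\matrix{C} = \matrix{C}\boldsymbol\Lambda$. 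Reading the commutator entrywise yields $(\lambda_i-\lambda_j)\,\matrix{C}_{ij}=0$, so $\matrix{C}_{ij}=0$ whenever $\lambda_i\neq\lambda_j$, i.e.\ $\matrix{C}$ is block diagonal with one s.p.d.\ block $\matrix{C}_k$ per distinct eigenvalue. That the $\lambda_i$ are genuinely real --- which is what makes this vanishing argument meaningful in $\mathbb{R}^{n\times n}$ --- follows from the identity $\matrix{B}^{-1}\matrix{A} = \matrix{A}^T\matrix{B}^{-1}$ (equivalent to $\matrix{AB}$ symmetric), expressing that $\matrix{A}$ is self-adjoint with respect to the s.p.d.\ inner product $\langle x,y\rangle = x^T\matrix{B}^{-1}y$.

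Finally I would build $\matrix{T}$ block by block. Each $\matrix{C}_k$ is a principal submatrix of the s.p.d.\ matrix $\matrix{C}$, hence s.p.d., and so admits a unique symmetric positive definite square root $\matrix{T}_k = \matrix{C}_k^{1/2}$. Assembling $\matrix{T} = \mathrm{blockdiag}(\matrix{T}_1,\dots,\matrix{T}_r)$ produces a symmetric block diagonal matrix with $\matrix{T}\matrix{T}^T = \matrix{T}^2 = \matrix{C}$, so $\widetilde{\matrix{R}} = \matrix{R}\matrix{T}$ satisfies $\widetilde{\matrix{R}}\widetilde{\matrix{R}}^T = \matrix{R}\matrix{C}\matrix{R}^T = \matrix{B}$ at once. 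For the diagonalization identity I would observe that within the $k$-th block $\boldsymbol\Lambda$ equals $\lambda_k\matrix{I}$, so $\matrix{T}$ commutes with $\boldsymbol\Lambda$; therefore
\begin{equation}
\widetilde{\matrix{R}}\,\boldsymbol\Lambda\,\widetilde{\matrix{R}}^{-1} = \matrix{R}\matrix{T}\boldsymbol\Lambda\matrix{T}^{-1}\matrix{R}^{-1} = \matrix{R}\boldsymbol\Lambda\matrix{R}^{-1} = \matrix{A}.
\end{equation}
Combining the two identities then gives $\matrix{AB} = \widetilde{\matrix{R}}\boldsymbol\Lambda\widetilde{\matrix{R}}^{-1}\widetilde{\matrix{R}}\widetilde{\matrix{R}}^T = \widetilde{\matrix{R}}\boldsymbol\Lambda\widetilde{\matrix{R}}^T$, completing the argument. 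I expect the block-diagonality step to be the only genuine obstacle, since it is the one place where the symmetrizer hypothesis and the reality of the spectrum must be used together.
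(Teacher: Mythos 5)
Your proof is correct and complete. Note that the paper itself does not prove this lemma at all --- its ``proof'' is a one-line citation to Barth --- so there is no in-paper argument to compare against; what you have written is essentially the standard argument from that reference: pass to $\matrix{C}=\matrix{R}^{-1}\matrix{B}\matrix{R}^{-T}$, use the symmetrizer hypothesis to show $\boldsymbol\Lambda\matrix{C}=\matrix{C}\boldsymbol\Lambda$ and hence that $\matrix{C}$ is block diagonal over the distinct eigenvalues, and take the s.p.d.\ square root of each block. Your aside on the reality of the spectrum (via self-adjointness of $\matrix{A}$ in the $\matrix{B}^{-1}$ inner product) is a genuine point that is often glossed over and is needed for the entrywise vanishing argument to make sense over $\mathbb{R}$. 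The only presentational caveat is that ``$\matrix{C}$ is block diagonal'' presumes the columns of $\matrix{R}$ are ordered so that repeated eigenvalues are contiguous; otherwise the conclusion is block diagonality with respect to the partition induced by the eigenvalues, which is how the lemma is applied in the paper anyway.
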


\begin{proof} The proof of the eigenvector scaling lemma is given in \cite{barth99}.\end{proof}

\begin{thm} (Entropy Stable - Roe Type Stabilization (ES\textrm{-}Roe)) If we apply the diagonal scaling matrix
\begin{equation}\label{scalingMatrix}
\matrix{T} = diag\left(\frac{1}{\sqrt{2\rho\gamma}},\,\sqrt{\frac{p}{2\rho^3b_{\perp}^2}},\,\frac{1}{\sqrt{2\rho\gamma}},\,\sqrt{\frac{\rho(\gamma-1)}{\gamma}},\,\sqrt{\frac{p}{\rho}},\,\frac{1}{\sqrt{2\rho\gamma}},\,\sqrt{\frac{p}{2\rho^3b_{\perp}^2}},\,\frac{1}{\sqrt{2\rho\gamma}}\right),
\end{equation}
to the matrix of right eigenvectors $\widehat{\matrix{R}}$ \eqref{rightEV}, then we obtain the Merriam identity \cite{merriam1989} (Eq. 7.3.1 pg. 77) 
\begin{equation}\label{MerriamIdentity}
\matrix{H} = \widetilde{\matrix{R}}\widetilde{\matrix{R}}^T = \left(\widehat{\matrix{R}}\matrix{T}\right) \left(\widehat{\matrix{R}}\matrix{T}\right)^T = \widehat{\matrix{R}}\matrix{S}\widehat{\matrix{R}}^T,
\end{equation}
that relates the right eigenvectors of $\widehat{\matrix{A}}$ to the entropy Jacobian matrix \eqref{entropyJacobian}. For convenience, we introduce the diagonal scaling matrix $\matrix{S}=\matrix{T}\,^2$ in \eqref{MerriamIdentity}. We then have the guaranteed entropy stable flux interface contribution
\begin{equation}\label{minimalDiss}
\vec{f}^{*,ES\textrm{-}Roe} = \vec{f}^{*,ec} - \frac{1}{2} \widehat{\matrix{R}}|\widehat{\boldsymbol\Lambda}|\matrix{S}\widehat{\matrix{R}}^T\jump{\vec{v}}.
\end{equation}
\end{thm}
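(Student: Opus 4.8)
The plan is to recognize that the entire statement reduces to establishing the Merriam identity \eqref{MerriamIdentity}, after which entropy stability follows almost for free. First I would place the problem under the hypotheses of the Eigenvector Scaling Lemma. The entropy Jacobian $\matrix{H}$ in \eqref{entropyJacobian} is symmetric positive definite, and the Powell-modified flux Jacobian $\widehat{\matrix{A}}$ in \eqref{fluxJacobianConservativeVars} is symmetrized by the entropy variables --- this is precisely Godunov's observation that the eight-wave system is symmetric hyperbolic, so that $\widehat{\matrix{A}}\matrix{H}$ is symmetric and $\matrix{H}$ is a right symmetrizer. The lemma then guarantees the \emph{existence} of a symmetric block-diagonal $\matrix{T}$ with $\widetilde{\matrix{R}} = \widehat{\matrix{R}}\matrix{T}$ and $\matrix{H} = \widetilde{\matrix{R}}\widetilde{\matrix{R}}^T$; the remaining task is to pin down the entries of $\matrix{T}$ and show they coincide with \eqref{scalingMatrix}.

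Second, I would argue that $\matrix{T}$ is in fact diagonal and extract a closed form for its entries. The factorization $\matrix{H} = \widetilde{\matrix{R}}\widetilde{\matrix{R}}^T$ is equivalent to the columns of $\widetilde{\matrix{R}}$ being orthonormal in the $\matrix{H}^{-1}$ inner product, i.e. $\widetilde{\matrix{R}}^T\matrix{H}^{-1}\widetilde{\matrix{R}} = \matrix{I}$. Since all eigenvalues in \eqref{eigenvalues} are simple except the double value $u$ carried by $\widehat{\vec{r}}_E$ and $\widehat{\vec{r}}_D$, the block structure forces $\matrix{T}$ to be diagonal provided these two vectors are already $\matrix{H}^{-1}$-orthogonal; I would verify $\widehat{\vec{r}}_E^T\matrix{H}^{-1}\widehat{\vec{r}}_D = 0$ directly. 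Diagonality then gives the scaling factors explicitly as
\begin{equation}
T_{jj} = \left(\widehat{\vec{r}}_j^T\matrix{H}^{-1}\widehat{\vec{r}}_j\right)^{-1/2}.
\end{equation}

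Third --- and this is where the real work lies --- I would evaluate these eight quadratic forms and check they reproduce \eqref{scalingMatrix}. The computation is dramatically simplified by passing to primitive variables: because $\widehat{\matrix{R}} = \matrix{M}\matrix{R}$ with $\matrix{M}$ from \eqref{MMatrix} and $\matrix{H} = \matrix{M}\,\matrix{H}_{pr}\,\matrix{M}^T$ for the primitive symmetrizer $\matrix{H}_{pr}$, the transformation matrices cancel and $\widehat{\vec{r}}_j^T\matrix{H}^{-1}\widehat{\vec{r}}_j = \matrix{r}_j^T\matrix{H}_{pr}^{-1}\matrix{r}_j$, so every factor may be computed from the compact rescaled eigenvectors \eqref{entropyR}--\eqref{MHDR}. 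The entropy and divergence factors $\sqrt{\rho(\gamma-1)/\gamma}$ and $\sqrt{p/\rho}$ follow quickly; the Alfv\'{e}n factor $\sqrt{p/(2\rho^3 b_\perp^2)}$ and, above all, the magnetoacoustic factors $1/\sqrt{2\rho\gamma}$ are the delicate ones. Here I expect to lean heavily on the $\alpha_{f,s}$ relations \eqref{rescaleIdent} and the quartic wave-speed identities \eqref{identities2} to collapse the $c_{f,s}$-dependence and show that the fast and slow quadratic forms both reduce to $2\rho\gamma$, the crux that makes the two acoustic-type factors identical. I would also record how the $b_\perp\to 0$ degeneracy is absorbed through the arbitrary choice of $\beta_{2,3}$, which preserves orthogonality and hence the factorization.

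Finally, with \eqref{MerriamIdentity} in hand the stability claim is short. Because $\matrix{T}$ and $|\widehat{\boldsymbol\Lambda}|$ are diagonal they commute, so the dissipation operator in \eqref{minimalDiss} is $\widehat{\matrix{R}}|\widehat{\boldsymbol\Lambda}|\matrix{S}\widehat{\matrix{R}}^T = \widetilde{\matrix{R}}|\widehat{\boldsymbol\Lambda}|\widetilde{\matrix{R}}^T$, which is symmetric positive semidefinite since $|\widehat{\boldsymbol\Lambda}|\ge 0$. Contracting the dissipative term with $\jump{\vec{v}}$ then yields $-\tfrac12\jump{\vec{v}}^T\widetilde{\matrix{R}}|\widehat{\boldsymbol\Lambda}|\widetilde{\matrix{R}}^T\jump{\vec{v}} \le 0$, a guaranteed entropy decrease added on top of the entropy-conservative base flux $\vec{f}^{*,ec}$; by the discussion of Sec. \ref{Sec:DissMat} the use of the Powell Jacobian solely in the dissipation introduces no inconsistency. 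The main obstacle is unquestionably the third step, the explicit magnetoacoustic normalization, where the wave-speed algebra must be orchestrated carefully so that both the $c_f$- and $c_s$-scalings land on the same value $1/\sqrt{2\rho\gamma}$.
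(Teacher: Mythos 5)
Your proposal is correct and follows essentially the same route as the paper: invoke Barth's eigenvector scaling lemma with $\matrix{H}$ as the right symmetrizer of $\widehat{\matrix{A}}$ to obtain the Merriam identity, then conclude entropy stability because the dissipation contraction $-\tfrac{1}{2}\jump{\vec{v}}^T\widehat{\matrix{R}}|\widehat{\boldsymbol\Lambda}|\matrix{S}\widehat{\matrix{R}}^T\jump{\vec{v}}$ is a nonpositive quadratic form. The only difference is that you make explicit (via the $\matrix{H}^{-1}$-orthonormalization $T_{jj}=(\widehat{\vec{r}}_j^T\matrix{H}^{-1}\widehat{\vec{r}}_j)^{-1/2}$ and the reduction to primitive variables) the computation of $\matrix{T}$ that the paper compresses into ``a considerable amount of algebraic manipulation.''
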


\begin{proof}
We define the dissipation term in the numerical flux \eqref{dissipation} to be
\begin{equation}\label{dissTerm1}
-\frac{1}{2}\matrix{D}\jump{\vec{q}} = -\frac{1}{2} |\widehat{\matrix{A}}|\jump{\vec{q}} = -\frac{1}{2} \widehat{\matrix{R}}|\widehat{\boldsymbol\Lambda}|\widehat{\matrix{L}}\jump{\vec{q}},
\end{equation}
where the eigendecomposition of $\widehat{\matrix{A}} = \widehat{\matrix{R}}\widehat{\boldsymbol\Lambda}\widehat{\matrix{L}}$ is given by \eqref{eigenvalues} and \eqref{rightEV}. We define entropy stability to mean the approximation guarantees that the entropy within the system is a decreasing function, satisfying the following inequality
\begin{equation}
\pderivative{U}{t} + \pderivative{F}{x} - \vec{v}^T\vec{s} \leq 0.
\end{equation}
From the previously computed discrete entropy update \eqref{TotalUpdate} {\color{black}{and the condition \eqref{entropyConservationCondition1} we find}} the total entropy within an element (now including the dissipative term \eqref{dissTerm1}) {\color{black}{to be}}
\begin{equation}\label{TotalUpdate2}
\begin{aligned}
\pderivative{}{t}(\Delta x_L U_L + \Delta x_R U_R) &=  \jump{\vec{v}}^T\vec{f}^* -\jump{\vec{v}\cdot\vec{f}}  + \average{\Delta x\vec{v}}{\color{black}{^T}}{\vec{s}}_{i+\tfrac{1}{2}}, \\
&=  \jump{\vec{v}}^T\vec{f}^{*,ec} - \frac{1}{2}\jump{\vec{v}}^T \widehat{\matrix{R}}|\widehat{\boldsymbol\Lambda}|\widehat{\matrix{L}}\jump{\vec{q}}-\jump{\vec{v}\cdot\vec{f}} + \average{\Delta x\vec{v}}{\color{black}{^T}}{\vec{s}}_{i+\tfrac{1}{2}}, \\
{\color{black}{\pderivative{}{t}(\Delta x_L U_L + \Delta x_R U_R)}} &={\color{black}{ - \jump{F}- \frac{1}{2}\jump{\vec{v}}^T \widehat{\matrix{R}}|\widehat{\boldsymbol\Lambda}|\widehat{\matrix{L}}\jump{\vec{q}},}} \\
\pderivative{}{t}(\Delta x_L U_L + \Delta x_R U_R) + \jump{F} &= - \frac{1}{2}\jump{\vec{v}}^T \widehat{\matrix{R}}|\widehat{\boldsymbol\Lambda}|\widehat{\matrix{L}}\jump{\vec{q}},
\end{aligned}
\end{equation}
from the design of the entropy conserving flux $\vec{f}^{*,ec}$. To ensure entropy stability, we must guarantee that the RHS term in \eqref{TotalUpdate2} is non-positive. Unfortunately, it was shown by Barth \cite{barth99} that the term 
\begin{equation}\label{RHSEntropy}
- \frac{1}{2}\jump{\vec{v}}^T \widehat{\matrix{R}}|\widehat{\boldsymbol\Lambda}|\widehat{\matrix{L}}\jump{\vec{q}},
\end{equation}
may become positive in the presence of very strong shocks. However, we know from entropy symmetrization theory, e.g \cite{barth99,merriam1989}, that the entropy Jacobian $\matrix{H}$, given by \eqref{entropyJacobian}, is a right symmetrizer for the flux Jacobian that incorporates the Powell source term $\widehat{\matrix{A}}$. Therefore, with the proper scaling matrix $\matrix{T}$ we acquire the Merriam identity 
\begin{equation}\label{MerriamIdentityinproof}
\matrix{H} = \widetilde{\matrix{R}}\widetilde{\matrix{R}}^T = \left(\widehat{\matrix{R}}\matrix{T}\right) \left(\widehat{\matrix{R}}\matrix{T}\right)^T = \widehat{\matrix{R}}\matrix{S}\widehat{\matrix{R}}^T.
\end{equation}
The rescaling of the right eigenvectors of $\widehat{\matrix{A}}$ to satisfy the Merriam identity \eqref{MerriamIdentityinproof} is sufficient to guarantee the negativity of \eqref{RHSEntropy}. We see from \eqref{RHSEntropy} and \eqref{MerriamIdentityinproof}
\begin{equation}\label{signSatified2}
\begin{aligned}
-\frac{1}{2}\jump{\vec{v}}^T \widehat{\matrix{R}}|\widehat{\boldsymbol\Lambda}|\widehat{\matrix{L}}\jump{\vec{q}} &\simeq - \frac{1}{2}\jump{\vec{v}}^T \widehat{\matrix{R}}|\widehat{\boldsymbol\Lambda}|\widehat{\matrix{L}}\vec{q}_{\vec{v}}\jump{\vec{v}},  \\
&= - \frac{1}{2}\jump{\vec{v}}^T \widehat{\matrix{R}}|\widehat{\boldsymbol\Lambda}|\widehat{\matrix{L}}\matrix{H}\jump{\vec{v}}, \\
&=- \frac{1}{2}\jump{\vec{v}}^T \widehat{\matrix{R}}|\widehat{\boldsymbol\Lambda}|\widehat{\matrix{L}}\left(\widehat{\matrix{R}}\matrix{S}\widehat{\matrix{R}}^T\right)\jump{\vec{v}}, \\
&=- \frac{1}{2}\jump{\vec{v}}^T \widehat{\matrix{R}}|\widehat{\boldsymbol\Lambda}|\matrix{S}\widehat{\matrix{R}}^T\jump{\vec{v}},
\end{aligned}
\end{equation}
where we used that $\widehat{\matrix{L}}=\widehat{\matrix{R}}^{-1}$. So with the appropriate diagonal scaling matrix $\matrix{S}$ we have shown that \eqref{signSatified2} is guaranteed negative because the product is a quadratic form scaled by a negative. We use the right eigenvectors from \eqref{rightEV}, the constraint \eqref{MerriamIdentityinproof}, and after a considerable amount of algebraic manipulation we determine the diagonal scaling matrix
\begin{equation}\label{scalingMatrixinProof}
\matrix{T} = diag\left(\frac{1}{\sqrt{2\rho\gamma}},\,\sqrt{\frac{p}{2\rho^3b_{\perp}^2}},\,\frac{1}{\sqrt{2\rho\gamma}},\,\sqrt{\frac{\rho(\gamma-1)}{\gamma}},\,\sqrt{\frac{p}{\rho}},\,\frac{1}{\sqrt{2\rho\gamma}},\,\sqrt{\frac{p}{2\rho^3b_{\perp}^2}},\,\frac{1}{\sqrt{2\rho\gamma}}\right).
\end{equation}
\end{proof}

\begin{rem} \textit{(Entropy Stable - Local Lax-Friedrichs Type Stabilization (ES-LLF))}
There are other possible, negativity guaranteeing (but more dissipative) choices for the dissipation term \eqref{dissipation}. For example, if we make the simple choice of dissipation matrix to be
\begin{equation}\label{LFDMat}
\matrix{D} = |\lambda_{max}|\matrix{I},
\end{equation}
where $\lambda_{max}$ is the largest eigenvalue of the system from \eqref{eigenvalues} and $\matrix{I}$ is the identity matrix, then we can rewrite the dissipation term
\begin{equation}\label{signSatified}
\begin{aligned}
-\frac{1}{2} |\lambda_{max}|\matrix{I}\jump{\vec{q}} &\simeq -\frac{1}{2} |\lambda_{max}|\matrix{H}\jump{\vec{v}}, \\
 &=- \frac{1}{2}|\lambda_{max}|\widehat{\matrix{R}}\matrix{S}\widehat{\matrix{R}}^T\jump{\vec{v}},
\end{aligned}
\end{equation}
and we obtain a local Lax-Friedrichs type interface stabilization
\begin{equation}\label{LFDiss}
\begin{aligned}
\vec{f}^{*,ES\textrm{-}LLF} &=  \vec{f}^{*,ec} - \frac{1}{2}|\lambda_{max}|\matrix{H}\jump{\vec{v}}, \\
&= \vec{f}^{*,ec} - \frac{1}{2}|\lambda_{max}|\widehat{\matrix{R}}\matrix{S}\widehat{\matrix{R}}^T\jump{\vec{v}},
\end{aligned}
\end{equation} 
where, again, we use the Merriam identity \eqref{MerriamIdentity} for the entropy Jacobian $\matrix{H}$.
\end{rem}
{\color{black}{\begin{rem}\textit{(Evaluation of Dissipative Terms)} For the entropy stable numerical results in Sec. \ref{ESRiemann} and \ref{ROTOR} we use the arithmetic mean of the left and right states at an interface to create the matrices $\widehat{\matrix{R}}$, $\matrix{S}$, $\widehat{\boldsymbol{\Lambda}}$, and $\matrix{H}$ in the diffusion terms for the ES-Roe scheme \eqref{minimalDiss} or the ES-LLF scheme \eqref{LFDiss}.
\end{rem}}}

\section{Numerical Results}\label{NumericalResults}

In this section, we numerically verify the theoretical findings for the entropy conserving and entropy stable approximations for the ideal MHD equations. To integrate the semi-discrete formulation in time we use a low storage five-stage, fourth-order accurate Runge-Kutta time integrator of Carpenter and Kennedy \cite{Carpenter&Kennedy:1994}. First, in Sec. \ref{EOC}, we consider a test problem with a known analytical solution to demonstrate the accuracy of the entropy conserving method as well as the two stabilized formulations. Next, Sec. \ref{EntropyCons} demonstrates the entropy conservation of the approximation for a three Riemann problem configurations on regular and irregular grids. In Sec. \ref{ECRiemann} we demonstrate the computed solution of the three Riemann problems for the entropy conserving method. These solutions will exhibit significant oscillations in shocked regions. Finally, in Sec. \ref{ESRiemann} we compare the two entropy stable approximations \eqref{minimalDiss} and \eqref{LFDiss} against a high-resolution approximation comparable to that presented in the literature \cite{brio1988,rossmanith2002,ryu1994,torrilhon2003}.

\subsection{Convergence}\label{EOC}

For the convergence test, we switch to the manufactured solution technique and generate a smooth and periodic solution 
\begin{equation}
\begin{bmatrix}
\rho(x,t)\\
u(x,t)\\
v(x,t)\\
w(x,t)\\
p(x,t)\\
B_1(x,t) \\
B_2(x,t)\\
B_3(x,t)
\end{bmatrix}
=%
\begin{bmatrix}
2+\sin{(2\pi\,(x-t))}\\
1\\
1\\
1\\
(2+\sin{(2\pi\,(x-t))})^2\\
1\\
2+\sin{(2\pi\,(x-t))}\\
2+\sin{(2\pi\,(x-t))}
\end{bmatrix},
\end{equation}
with an additional analytic source term on the right hand side
\begin{equation}
\begin{bmatrix}
s_1(x,t)\\
s_2(x,t)\\
s_3(x,t)\\
s_4(x,t)\\
s_5(x,t)\\
s_6(x,t)\\
s_7(x,t)\\
s_8(x,t)
\end{bmatrix}
=%
\begin{bmatrix}
0\\
4\rho\rho_x \\
-\rho_x\\
-\rho_x\\
4\rho\rho_x-2\rho_x\\
0\\
0\\
0
\end{bmatrix},
\end{equation}
on the domain $\Omega=[-1,1]$ with periodic boundary conditions and final time $T=2$. We select the time step for the RK method small enough such that the error in the approximation is dominated by the error in the spatial discretizations. For all computations, we select a regular grid chosen according to the number of grid cells listed in the tables below as well as an irregular stretched mesh with a fixed ratio of 
\begin{equation}\label{gridRatio}
\frac{\Delta x_{max}}{\Delta x_{min}}=10.
\end{equation}

First, we test the entropy conserving (EC) scheme on a regular grid. The $L_2$-errors for all conserved quantities are shown in Tbl.~\ref{tab:EOC_EC_regular}. We note that for the specific regular grid used in the numerical experiment, the entropy conserving scheme is second order accurate. The average accuracy is hovering around $1.85$, however looking closely at the finest grid results, it is clear that the experimental order of convergence is close to $2$. 
The higher order convergence for the EC finite volume scheme is an effect of approximating the solution on a regular grid. The scheme drops to first order accuracy when an irregular grid is chosen as shown in Tbl.~\ref{tab:EOC_EC_irregular}, where we stretch the grid with a constant factor of \eqref{gridRatio}.

\begin{table}[!ht]
\footnotesize
\begin{center}
\begin{tabular}{lcccccccc}
\toprule
$\#$ elements & $\rho$ & $\rho u$ & $\rho v$ & $\rho w$ & $\rho e$ & $B_1$ & $B_2$ & $B_3$ \\[0.075cm]\toprule
50 & $5.29\text{E \!-}03$ &  $5.53\text{E \!-}03$ & $5.53\text{E \!-}03$ & $5.53\text{E \!-}03$ & $3.07\text{E \!-}03$ & $0.00$ &  $1.64\text{E \!-}03$ & $1.64\text{E \!-}03$ \\\midrule
100& $2.12\text{E \!-}03$&  $2.36\text{E \!-}03$&  $ 2.36\text{E \!-}03$ &  $2.36\text{E \!-}03$ & $ 1.05\text{E \!-}03$ & $ 0.00$ &  $2.67\text{E \!-}04$ & $2.67\text{E \!-}04$ \\\midrule
200 & $9.65\text{E \!-}04$&   $9.39\text{E \!-}04$&  $ 9.39\text{E \!-}04$ & $ 9.39\text{E \!-}04$ & $ 2.56\text{E \!-}04$ & $ 0.00$ & $ 1.50\text{E \!-}04$ & $ 1.50\text{E \!-}04$\\\midrule
400 & $ 2.40\text{E \!-}04$ & $ 1.33\text{E \!-}04$ &  $1.33\text{E \!-}04$ & $1.33\text{E \!-}04$ & $ 6.39\text{E \!-}05$ & $ 0.00$ & $ 3.56\text{E \!-}05$ & $ 3.56\text{E \!-}05$ \\\midrule\midrule
avg EOC& $\mathbf{1.93}$ & $\mathbf{1.79}$ & $\mathbf{1.79}$ & $\mathbf{1.79}$ & $\mathbf{1.86}$ & --& $\mathbf{1.85}$ & $\mathbf{1.85}$ \\
\bottomrule
\end{tabular}
\end{center}
\caption{$L_2$ error of approximation to demonstrate the experimental order of convergence (EOC) for the \textit{entropy conserving (EC)} scheme on a regular grid.}
\label{tab:EOC_EC_regular}
\end{table}%

\begin{table}[!ht]
\footnotesize
\begin{center}
\begin{tabular}{lcccccccc}
\toprule
$\#$ elements & $\rho$ & $\rho u$ & $\rho v$ & $\rho w$ & $\rho e$ & $B_1$ & $B_2$ & $B_3$ \\[0.075cm]\toprule
50   & $1.32\text{E \!-}01$&  $1.74\text{E \!-}01$&  $ 1.24\text{E \!-}01$ &  $1.24\text{E \!-}01$ & $ 1.57\text{E \!-}00$ & $ 0.00$ &  $1.67\text{E \!-}01$ & $1.67\text{E \!-}01$ \\\midrule
100 & $6.20\text{E \!-}02$ &  $7.63\text{E \!-}02$ & $5.59\text{E \!-}02$ & $5.59\text{E \!-}02$ & $7.80\text{E \!-}01$ & $0.00$ &  $8.26\text{E \!-}02$ & $8.26\text{E \!-}02$ \\\midrule
200 & $3.12\text{E \!-}02$&   $3.56\text{E \!-}02$&  $ 2.64\text{E \!-}02$ & $ 2.64\text{E \!-}02$ & $ 3.85\text{E \!-}01$ & $ 0.00$ & $ 4.07\text{E \!-}02$ & $ 4.07\text{E \!-}02$\\\midrule
400 & $ 1.54\text{E \!-}02$ & $ 1.72\text{E \!-}02$ &  $1.29\text{E \!-}02$ & $1.29\text{E \!-}02$ & $ 1.91\text{E \!-}01$ & $ 0.00$ & $ 2.02\text{E \!-}02$ & $ 2.02\text{E \!-}02$ \\\midrule\midrule
avg EOC& $\mathbf{1.03}$ & $\mathbf{1.11}$ & $\mathbf{1.09}$ & $\mathbf{1.09}$ & $\mathbf{1.01}$ & --& $\mathbf{1.02}$ & $\mathbf{1.02}$ \\
\bottomrule
\end{tabular}
\end{center}
\caption{$L_2$ error of approximation to demonstrate the experimental order of convergence (EOC) for the \textit{entropy conserving (EC)} scheme on an irregular grid with a stretching factor of \eqref{gridRatio}.}
\label{tab:EOC_EC_irregular}
\end{table}%

Next, we demonstrate the convergence of the two entropy stable finite volume schemes. The convergence results for the ES-Roe method are shown for the regular grid test in Tbl.~\ref{tab:EOC_ES-Roe_regular} and the irregular grid test in Tbl.~\ref{tab:EOC_ES-Roe_irregular}, where we see the average experimental convergence rate for either the regular or irregular grids are both first order accurate. Similar results for the ES-LLF scheme are given for the regular grid in Tbl.~\ref{tab:EOC_ES-LLF_regular} and the irregular grid in Tbl.~\ref{tab:EOC_ES-LLF_irregular}. We see that the ES-LLF method is decidedly first order accurate as well. 

\begin{table}[!ht]
\footnotesize
\begin{center}
\begin{tabular}{lcccccccc}
\toprule
$\#$ elements & $\rho$ & $\rho u$ & $\rho v$ & $\rho w$ & $\rho e$ & $B_1$ & $B_2$ & $B_3$ \\[0.075cm]\toprule
50 & $2.49\text{E \!-}02$ &  $1.90\text{E \!-}01$ & $3.46\text{E \!-}02$ & $3.46\text{E \!-}02$ & $4.07\text{E \!-}01$ & $0.00$ &  $4.01\text{E \!-}02$ & $4.01\text{E \!-}02$ \\\midrule
100& $1.50\text{E \!-}02$&  $1.01\text{E \!-}01$&  $1.01\text{E \!-}02$ &  $1.01\text{E \!-}02$ & $ 2.04\text{E \!-}01$ & $ 0.00$ &  $1.99\text{E \!-}02$ & $1.99\text{E \!-}02$ \\\midrule
200 & $9.00\text{E \!-}03$&   $5.85\text{E \!-}02$&  $ 5.50\text{E \!-}03$ & $ 5.50\text{E \!-}03$ & $ 1.12\text{E \!-}01$ & $ 0.00$ & $ 1.18\text{E \!-}02$ & $ 1.18\text{E \!-}02$\\\midrule
400 & $ 3.26\text{E \!-}03$ & $ 3.05\text{E \!-}02$ &  $2.85\text{E \!-}03$ & $2.85\text{E \!-}03$ & $ 6.04\text{E \!-}02$ & $ 0.00$ & $ 5.93\text{E \!-}03$ & $ 5.93\text{E \!-}03$ \\\midrule\midrule
avg EOC& $\mathbf{0.98}$ & $\mathbf{0.88}$ & $\mathbf{0.91}$ & $\mathbf{0.91}$ & $\mathbf{0.92}$ & --& $\mathbf{0.92}$ & $\mathbf{0.92}$ \\
\bottomrule
\end{tabular}
\end{center}
\caption{$L_2$ error of approximation to demonstrate the experimental order of convergence (EOC) for the \textit{ES-Roe} scheme on a regular grid.}
\label{tab:EOC_ES-Roe_regular}
\end{table}%

\begin{table}[!ht]
\footnotesize
\begin{center}
\begin{tabular}{lcccccccc}
\toprule
$\#$ elements & $\rho$ & $\rho u$ & $\rho v$ & $\rho w$ & $\rho e$ & $B_1$ & $B_2$ & $B_3$ \\[0.075cm]\toprule
50 & $8.05\text{E \!-}02$ &  $3.38\text{E \!-}01$ & $8.17\text{E \!-}02$ & $8.17\text{E \!-}02$ & $5.52\text{E \!-}01$ & $0.00$ &  $4.50\text{E \!-}02$ & $4.50\text{E \!-}02$ \\\midrule
100& $4.25\text{E \!-}02$&  $1.75\text{E \!-}01$&  $5.13\text{E \!-}02$ &  $5.13\text{E \!-}02$ & $ 2.55\text{E \!-}01$ & $ 0.00$ &  $2.30\text{E \!-}02$ & $2.30\text{E \!-}02$ \\\midrule
200 & $2.19\text{E \!-}02$&   $9.05\text{E \!-}02$&  $2.74\text{E \!-}02$ & $ 2.74\text{E \!-}02$ & $ 1.35\text{E \!-}01$ & $ 0.00$ & $ 1.22\text{E \!-}02$ & $ 1.22\text{E \!-}02$\\\midrule
400 & $ 1.11\text{E \!-}02$ & $ 4.65\text{E \!-}02$ &  $1.41\text{E \!-}03$ & $1.41\text{E \!-}02$ & $ 6.54\text{E \!-}02$ & $ 0.00$ & $ 6.27\text{E \!-}03$ & $ 6.27\text{E \!-}03$ \\\midrule\midrule
avg EOC& $\mathbf{0.95}$ & $\mathbf{0.95}$ & $\mathbf{0.95}$ & $\mathbf{0.95}$ & $\mathbf{1.02}$ & --& $\mathbf{0.95}$ & $\mathbf{0.95}$ \\
\bottomrule
\end{tabular}
\end{center}
\caption{$L_2$ error of approximation to demonstrate the experimental order of convergence (EOC) for the \textit{ES-Roe} scheme on an irregular grid with stretching factor \eqref{gridRatio}.}
\label{tab:EOC_ES-Roe_irregular}
\end{table}%

\begin{table}[!ht]
\footnotesize
\begin{center}
\begin{tabular}{lcccccccc}
\toprule
$\#$ elements & $\rho$ & $\rho u$ & $\rho v$ & $\rho w$ & $\rho e$ & $B_1$ & $B_2$ & $B_3$ \\[0.075cm]\toprule
50 & $3.24\text{E \!-}02$ &  $2.16\text{E \!-}01$ & $3.94\text{E \!-}02$ & $3.94\text{E \!-}02$ & $2.77\text{E \!-}01$ & $0.00$ &  $3.06\text{E \!-}02$ & $3.06\text{E \!-}02$ \\\midrule
100& $2.03\text{E \!-}02$&  $1.14\text{E \!-}01$&  $2.02\text{E \!-}02$ &  $2.02\text{E \!-}02$ & $ 1.32\text{E \!-}01$ & $ 0.00$ &  $1.49\text{E \!-}02$ & $1.49\text{E \!-}02$ \\\midrule
200 & $1.19\text{E \!-}02$&   $6.82\text{E \!-}02$&  $ 1.13\text{E \!-}02$ & $ 1.13\text{E \!-}02$ & $ 6.70\text{E \!-}02$ & $ 0.00$ & $ 7.87\text{E \!-}03$ & $ 7.87\text{E \!-}03$\\\midrule
400 & $ 5.39\text{E \!-}03$ & $ 3.25\text{E \!-}02$ &  $5.77\text{E \!-}03$ & $5.77\text{E \!-}03$ & $ 3.50\text{E \!-}02$ & $ 0.00$ & $ 4.12\text{E \!-}03$ & $ 4.12\text{E \!-}03$ \\\midrule\midrule
avg EOC& $\mathbf{0.87}$ & $\mathbf{0.91}$ & $\mathbf{0.92}$ & $\mathbf{0.92}$ & $\mathbf{1.00}$ & --& $\mathbf{0.96}$ & $\mathbf{0.96}$ \\
\bottomrule
\end{tabular}
\end{center}
\caption{$L_2$ error of approximation to demonstrate the experimental order of convergence (EOC) for the \textit{ES-LLF} scheme on a regular grid.}
\label{tab:EOC_ES-LLF_regular}
\end{table}%

\begin{table}[!ht]
\footnotesize
\begin{center}
\begin{tabular}{lcccccccc}
\toprule
$\#$ elements & $\rho$ & $\rho u$ & $\rho v$ & $\rho w$ & $\rho e$ & $B_1$ & $B_2$ & $B_3$ \\[0.075cm]\toprule
50 & $5.23\text{E \!-}02$ &  $3.70\text{E \!-}01$ & $6.67\text{E \!-}02$ & $6.67\text{E \!-}02$ & $4.19\text{E \!-}01$ & $0.00$ &  $2.34\text{E \!-}02$ & $2.34\text{E \!-}02$ \\\midrule
100& $2.66\text{E \!-}02$&  $1.91\text{E \!-}01$&  $3.48\text{E \!-}02$ &  $3.48\text{E \!-}02$ & $ 2.11\text{E \!-}01$ & $ 0.00$ &  $1.16\text{E \!-}02$ & $1.16\text{E \!-}02$ \\\midrule
200 & $1.33\text{E \!-}02$&   $1.03\text{E \!-}01$&  $1.81\text{E \!-}02$ & $ 1.81\text{E \!-}02$ & $ 1.05\text{E \!-}02$ & $ 0.00$ & $ 6.04\text{E \!-}03$ & $ 6.04\text{E \!-}03$\\\midrule
400 & $ 6.91\text{E \!-}03$ & $ 6.56\text{E \!-}02$ &  $9.25\text{E \!-}03$ & $9.25\text{E \!-}03$ & $ 5.52\text{E \!-}03$ & $ 0.00$ & $ 3.12\text{E \!-}03$ & $ 3.12\text{E \!-}03$ \\\midrule\midrule
avg EOC& $\mathbf{0.97}$ & $\mathbf{0.96}$ & $\mathbf{0.95}$ & $\mathbf{0.95}$ & $\mathbf{0.97}$ & --& $\mathbf{0.97}$ & $\mathbf{0.97}$ \\
\bottomrule
\end{tabular}
\end{center}
\caption{$L_2$ error of approximation to demonstrate the experimental order of convergence (EOC) for the \textit{ES-LLF} scheme on an irregular grid with stretching factor \eqref{gridRatio}.}
\label{tab:EOC_ES-LLF_irregular}
\end{table}%

\subsection{Mass, Momentum, Energy, and Entropy Conservation}\label{EntropyCons}
By design we know the finite volume scheme for the ideal MHD equations with the Janhunen source term \eqref{1DIdealMHDwithSource} will conserve mass and momentum. From the derivations in Sec. \ref{EntropyFlux} we know that the scheme also exactly preserves the entropy on a general grid, if we use the newly designed flux $\vec{f}^{*,ec}$ \eqref{Sec:EntropyConservingFlux}. We measure the change in any of the conservative variables with
\begin{equation}
\label{eq:errortotalenergy}
\Delta e(T) \coloneqq |e_{int}(t=0) - e_{int}(T)|,
\end{equation}
where, for example, $e_{int}$ is the entropy function
\begin{equation}\label{thisAgain}
U =  -\frac{\rho s}{\gamma -1},
\end{equation} 
integrated over the whole domain.

To demonstrate the conservative properties we consider three one-dimensional Riemann problems from the literature. The test problems we consider are:
\begin{enumerate}
\item \textit{\underline{Brio and Wu Shock Tube}} \cite{brio1988}: 
\begin{equation}\label{StrongRiemann_Brio}
\begin{bmatrix}\rho, \rho u, \rho v, \rho w, p, B_1, B_2, B_3 \end{bmatrix}^T = \left\{
\begin{array}{lc}
\left[1,0,0,{\color{black}{0,1}},0.75,1,0\right]^T, & \textrm{if}\quad x \leq 0.5, \\ 
\left[0.125,0,0,0,0.1,0.75,-1,0\right]^T, & \textrm{if}\quad x > 0.5,
\end{array}
\right.
\end{equation}
on the domain $\Omega = [0,1]$, $\gamma = 2$, and final time $T=0.12$.
\item \textit{\underline{Ryu and Jones Riemann Problem}} \cite{ryu1994}: 
\begin{equation}\label{StrongRiemann_Ryu}
\begin{bmatrix}\rho, \rho u, \rho v, \rho w, p, B_1, B_2, B_3 \end{bmatrix}^T = \left\{
\begin{array}{lc}
\left[1,0,0,0,1,0.7,0,0\right]^T, & \textrm{if}\quad x \leq 0, \\ 
\left[0.3,0,0,1,0.2,0.7,1,0\right]^T, & \textrm{if}\quad x > 0,
\end{array}
\right.
\end{equation}
on the domain $\Omega = [-1,1]$, $\gamma = 5/3$, and final time $T=0.4$. 
\item \textit{\underline{Torrilhon Riemann Problem}} \cite{torrilhon2003}: 
\begin{equation}\label{StrongRiemann_Tor}
\begin{bmatrix}\rho, \rho u, \rho v, \rho w, p, B_1, B_2, B_3 \end{bmatrix}^T = \left\{
\begin{array}{lc}
\left[3,0,0,0,3,1.5,1,0\right]^T, & \textrm{if}\quad x \leq 0, \\ 
\left[1,0,0,0,1,1.5,\cos(1.5),\sin(1.5)\right]^T, & \textrm{if}\quad x > 0,
\end{array}
\right.
\end{equation}
on the domain $\Omega = [-1,1.5]$, $\gamma = 5/3$, and final time $T=0.4$. 
\end{enumerate}
 For each of the conservation test problems we set \textit{periodic boundary conditions}.

We compute the error in the change of each conservative variable for three values of the Courant-Friedrichs-Lewy (CFL) number: 1.0, 0.1, and 0.01. For each simulation the entropy conserving finite volume scheme used 100 regular grid cells to compute the results in Tbls. \ref{tab:conservation_Brio}, \ref{tab:conservation_Ryu}, and \ref{tab:conservation_Torr} and 100 irregular grid cells for the results in Tbls. \ref{tab:conservation_Brio_irregular}, \ref{tab:conservation_Ryu_irregular}, and \ref{tab:conservation_Torr_irregular}. 

For each value of the CFL number we obtain errors in the mass, momentum, energy, and magnetic field variables on the order of finite precision. This is not surprising as for one-dimensional MHD flow we know analytically that $B_1\equiv constant$ and thus the Janhunen source term is identically zero. We reiterate the source term was necessary for the proof of entropy conservation in Sec. \ref{Sec:EntropyConservingFlux}, but is expected to vanish in 1D. For multi-dimensional flows one would see that the addition of the Janhunen source term to enforce the divergence-free condition will cause the loss of conservation of the magnetic field quantities $\vec{B}$. 

{\color{black}{Due to the dissipative influence of the time integrator we know that the entropy should not be conserved. However, due to the entropy conservative flux the change in the total entropy will converge to zero as $\Delta t$ converges to zero. In each of the Tbls. \ref{tab:conservation_Brio} - \ref{tab:conservation_Torr_irregular} we demonstrate this property. }}
 The dissipation introduced by the temporal discretization is significantly reduced if we shrink the time step. We see that the {\color{black}{change}} of the entropy can be lowered to single or double machine precision by decreasing the CFL number, and hence the time step.

In each of the tables in this section it is also possible to see the temporal accuracy of the approximations. If we shrink the time step by a factor ten we see that the error in the entropy shrinks by a little over a factor of $10^4$, as we expect for a fourth order time integrator. 
\begin{table}[!ht]
\small
\begin{center}
\begin{tabular}{lccccccccc}
\toprule
CFL & $\rho$ & $\rho u$ & $\rho v$ & $\rho w$ & $\rho e$ & $B_1$ & $B_2$ & $B_3$ & $U$ \\[0.075cm]\toprule
$1.0$ & $2.00\text{E \!-}16$ & $2.08\text{E \!-}16$ & $4.51\text{E \!-}16$ &  $2.44\text{E \!-}15$ & $8.88\text{E \!-}16$ & $0.00$ & $3.13\text{E \!-}16$ & $2.22\text{E \!-}16$ & $5.64\text{E \!-}04$ \\\midrule
$0.1$ & $2.66\text{E \!-}16$ & $2.77\text{E \!-}16$ & $2.77\text{E \!-}16$ & $-2.22\text{E \!-}15$ & $1.11\text{E \!-}15$ & $0.00$ & $1.36\text{E \!-}16$ & $2.22\text{E \!-}16$ & $1.61\text{E \!-}08$ \\\midrule
$0.01$ & $2.66\text{E \!-}16$ & $2.22\text{E \!-}16$ & $2.22\text{E \!-}16$ & $-2.22\text{E \!-}15$ & $1.11\text{E \!-}15$ & $0.00$ & $2.01\text{E \!-}16$ & $8.88\text{E \!-}16$ & $1.41\text{E \!-}12$ \\\bottomrule
\end{tabular}
\end{center}
\caption{Conservation errors (integrated over the whole domain) of the entropy conserving approximation applied to the Brio and Wu shock tube problem \eqref{StrongRiemann_Brio} for different CFL numbers, final time $T=0.12$, and $100$ regular grid cells.}
\label{tab:conservation_Brio}
\end{table}%

\begin{table}[!ht]
\small
\begin{center}
\begin{tabular}{lccccccccc}
\toprule
CFL & $\rho$ & $\rho u$ & $\rho v$ & $\rho w$ & $\rho e$ & $B_1$ & $B_2$ & $B_3$ & $U$ \\[0.075cm]\toprule
$1.0$   & $2.22\text{E \!-}16$ & $2.22\text{E \!-}16$ & $2.22\text{E \!-}16$ &  $4.44\text{E \!-}16$ & $4.44\text{E \!-}16$ & $0.00$ & $2.22\text{E \!-}16$ & $4.34\text{E \!-}16$ & $2.86\text{E \!-}05$ \\\midrule
$0.1$   & $2.22\text{E \!-}16$ & $2.87\text{E \!-}16$ & $1.13\text{E \!-}16$ & $4.44\text{E \!-}16$ & $4.44\text{E \!-}16$ & $0.00$ & $2.22\text{E \!-}16$ & $2.22\text{E \!-}16$ & $1.97\text{E \!-}09$ \\\midrule
$0.01$ & $8.88\text{E \!-}16$ & $2.22\text{E \!-}16$ & $2.22\text{E \!-}16$ & $-2.22\text{E \!-}16$ & $4.44\text{E \!-}16$ & $0.00$ & $4.44\text{E \!-}16$ & $8.88\text{E \!-}16$ & $1.62\text{E \!-}13$ \\\bottomrule
\end{tabular}
\end{center}
\caption{Conservation errors (integrated over the whole domain) of the entropy conserving approximation applied to the Ryu and Jones Riemann problem \eqref{StrongRiemann_Ryu} for different CFL numbers, final time $T=0.4$, and $100$ regular grid cells.}
\label{tab:conservation_Ryu}
\end{table}%

\begin{table}[!ht]
\small
\begin{center}
\begin{tabular}{lccccccccc}
\toprule
CFL & $\rho$ & $\rho u$ & $\rho v$ & $\rho w$ & $\rho e$ & $B_1$ & $B_2$ & $B_3$ & $U$ \\[0.075cm]\toprule
$1.0$   & $1.55\text{E \!-}15$ & $2.22\text{E \!-}16$ & $2.22\text{E \!-}16$ &  $4.44\text{E \!-}16$ & $2.22\text{E \!-}16$ & $0.00$ & $2.22\text{E \!-}16$ & $2.22\text{E \!-}16$ & $1.02\text{E \!-}05$ \\\midrule
$0.1$   & $6.66\text{E \!-}16$ & $2.22\text{E \!-}16$ & $4.44\text{E \!-}16$ & $4.44\text{E \!-}16$ & $2.22\text{E \!-}16$ & $0.00$ & $6.66\text{E \!-}16$ & $1.11\text{E \!-}16$ & $1.08\text{E \!-}09$ \\\midrule
$0.01$ & $8.88\text{E \!-}16$ & $2.22\text{E \!-}16$ & $2.22\text{E \!-}16$ & $4.44\text{E \!-}16$ & $2.22\text{E \!-}16$ & $0.00$ & $-2.22\text{E \!-}16$ & $2.22\text{E \!-}16$ & $1.06\text{E \!-}13$ \\\bottomrule
\end{tabular}
\end{center}
\caption{Conservation errors (integrated over the whole domain) of the entropy conserving approximation applied to the Torrilhon Riemann problem \eqref{StrongRiemann_Tor} for different CFL numbers, final time $T=0.4$, and $100$ regular grid cells.}
\label{tab:conservation_Torr}
\end{table}%

\begin{table}[!ht]
\small
\begin{center}
\begin{tabular}{lccccccccc}
\toprule
CFL & $\rho$ & $\rho u$ & $\rho v$ & $\rho w$ & $\rho e$ & $B_1$ & $B_2$ & $B_3$ & $U$ \\[0.075cm]\toprule
$1.0$ & $5.55\text{E \!-}16$ & $4.44\text{E \!-}16$ & $2.22\text{E \!-}16$ &  $2.22\text{E \!-}16$ & $4.44\text{E \!-}16$ & $0.00$ & $1.84\text{E \!-}15$ & $8.88\text{E \!-}16$ & $5.54\text{E \!-}04$ \\\midrule
$0.1$ & $2.22\text{E \!-}16$ & $8.88\text{E \!-}16$ & $2.22\text{E \!-}16$ & $-2.22\text{E \!-}15$ & $2.22\text{E \!-}15$ & $0.00$ & $1.11\text{E \!-}15$ & $8.88\text{E \!-}16$ & $5.05\text{E \!-}08$ \\\midrule
$0.01$ & $4.44\text{E \!-}16$ & $2.22\text{E \!-}16$ & $2.22\text{E \!-}16$ & $-5.55\text{E \!-}15$ & $1.33\text{E \!-}15$ & $0.00$ & $8.82\text{E \!-}15$ & $4.44\text{E \!-}16$ & $2.73\text{E \!-}12$ \\\bottomrule
\end{tabular}
\end{center}
\caption{Conservation errors (integrated over the whole domain) of the entropy conserving approximation applied to the Brio and Wu shock tube problem \eqref{StrongRiemann_Brio} for different CFL numbers, final time $T=0.12$, and $100$ irregular grid cells with a stretching factor \eqref{gridRatio}.}
\label{tab:conservation_Brio_irregular}
\end{table}%

\begin{table}[!ht]
\small
\begin{center}
\begin{tabular}{lccccccccc}
\toprule
CFL & $\rho$ & $\rho u$ & $\rho v$ & $\rho w$ & $\rho e$ & $B_1$ & $B_2$ & $B_3$ & $U$ \\[0.075cm]\toprule
$1.0$   & $2.22\text{E \!-}16$ & $2.22\text{E \!-}16$ & $2.22\text{E \!-}16$ &  $4.44\text{E \!-}16$ & $2.22\text{E \!-}16$ & $0.00$ & $2.22\text{E \!-}16$ & $1.11\text{E \!-}16$ & $3.51\text{E \!-}04$ \\\midrule
$0.1$   & $2.22\text{E \!-}16$ & $2.22\text{E \!-}16$ & $1.11\text{E \!-}16$ & $1.11\text{E \!-}16$ & $2.22\text{E \!-}16$ & $0.00$ & $4.44\text{E \!-}16$ & $2.22\text{E \!-}16$ & $2.56\text{E \!-}08$ \\\midrule
$0.01$ & $2.88\text{E \!-}15$ & $2.22\text{E \!-}16$ & $2.22\text{E \!-}16$ & $1.33\text{E \!-}16$ & $8.88\text{E \!-}16$ & $0.00$ & $6.66\text{E \!-}16$ & $2.22\text{E \!-}16$ & $2.30\text{E \!-}12$ \\\bottomrule
\end{tabular}
\end{center}
\caption{Conservation errors (integrated over the whole domain) of the entropy conserving approximation applied to the Ryu and Jones Riemann problem \eqref{StrongRiemann_Ryu} for different CFL numbers, final time $T=0.4$, and $100$ irregular grid cells with a stretching factor \eqref{gridRatio}.}
\label{tab:conservation_Ryu_irregular}
\end{table}%

\begin{table}[!ht]
\small
\begin{center}
\begin{tabular}{lccccccccc}
\toprule
CFL & $\rho$ & $\rho u$ & $\rho v$ & $\rho w$ & $\rho e$ & $B_1$ & $B_2$ & $B_3$ & $U$ \\[0.075cm]\toprule
$1.0$   & $2.22-16$ & $2.22\text{E \!-}16$ & $2.22\text{E \!-}16$ &  $4.44\text{E \!-}16$ & $1.11\text{E \!-}16$ & $0.00$ & $2.22\text{E \!-}16$ & $2.22\text{E \!-}16$ & $6.28\text{E \!-}04$ \\\midrule
$0.1$   & $2.22\text{E \!-}16$ & $4.44\text{E \!-}16$ & $4.44\text{E \!-}16$ & $4.44\text{E \!-}16$ & $2.22\text{E \!-}16$ & $0.00$ & $-2.22\text{E \!-}16$ & $-5.55\text{E \!-}16$ & $5.00\text{E \!-}08$ \\\midrule
$0.01$ & $8.88\text{E \!-}16$ & $2.22\text{E \!-}16$ & $2.22\text{E \!-}16$ & $4.44\text{E \!-}16$ & $4.44\text{E \!-}16$ & $0.00$ & $-4.44\text{E \!-}16$ & $6.66\text{E \!-}16$ & $4.08\text{E \!-}12$ \\\bottomrule
\end{tabular}
\end{center}
\caption{Conservation errors (integrated over the whole domain) of the entropy conserving approximation applied to the Torrilhon Riemann problem \eqref{StrongRiemann_Tor} for different CFL numbers, final time $T=0.4$, and $100$ irregular grid cells with a stretching factor \eqref{gridRatio}.}
\label{tab:conservation_Torr_irregular}
\end{table}%
 
\subsection{Entropy Conserving Riemann Problem}\label{ECRiemann}

We now apply the entropy conserving (EC) scheme to the Riemann problems $\eqref{StrongRiemann_Brio} - \eqref{StrongRiemann_Tor}$, except we choose inflow/outflow type boundary conditions. The computation is performed on 200 regular grid cells with CFL number 0.1 and to the final time indicated by the test problem subscribed. {\color{black}{As we discussed in Sec. \ref{EntropyFlux}, entropy conserving methods suffer breakdown in the presence of shocks. The numerical approximation does not dissipate energy at a shock. Thus, we expect large post-shock oscillations for the EC scheme applied to a Riemann problem.}} To create a reference solution for each of the test problems we use the ES-Roe scheme on 5000 grid cells. The ES-LLF scheme gives the same reference solution with 10,000 grid cells.

\subsubsection{Brio and Wu Shock Tube}\label{BrioWuEC}

Figure \ref{fig:EC_Brio} shows results for the entropy conserving (EC) finite volume scheme for the Brio and Wu shock tube. The EC method captures the fast/slow rarefractions, slow compound wave in the density, $u$ and the pressure, as well as shocks. However, this comes at the expense of large post-shock oscillations. These oscillations are expected as energy must be dissipated across the shock but the EC scheme is basically dissipation free except for the influence of the time integrator.
\begin{figure}[!ht]
\begin{center}
{
\includegraphics[scale=0.55]{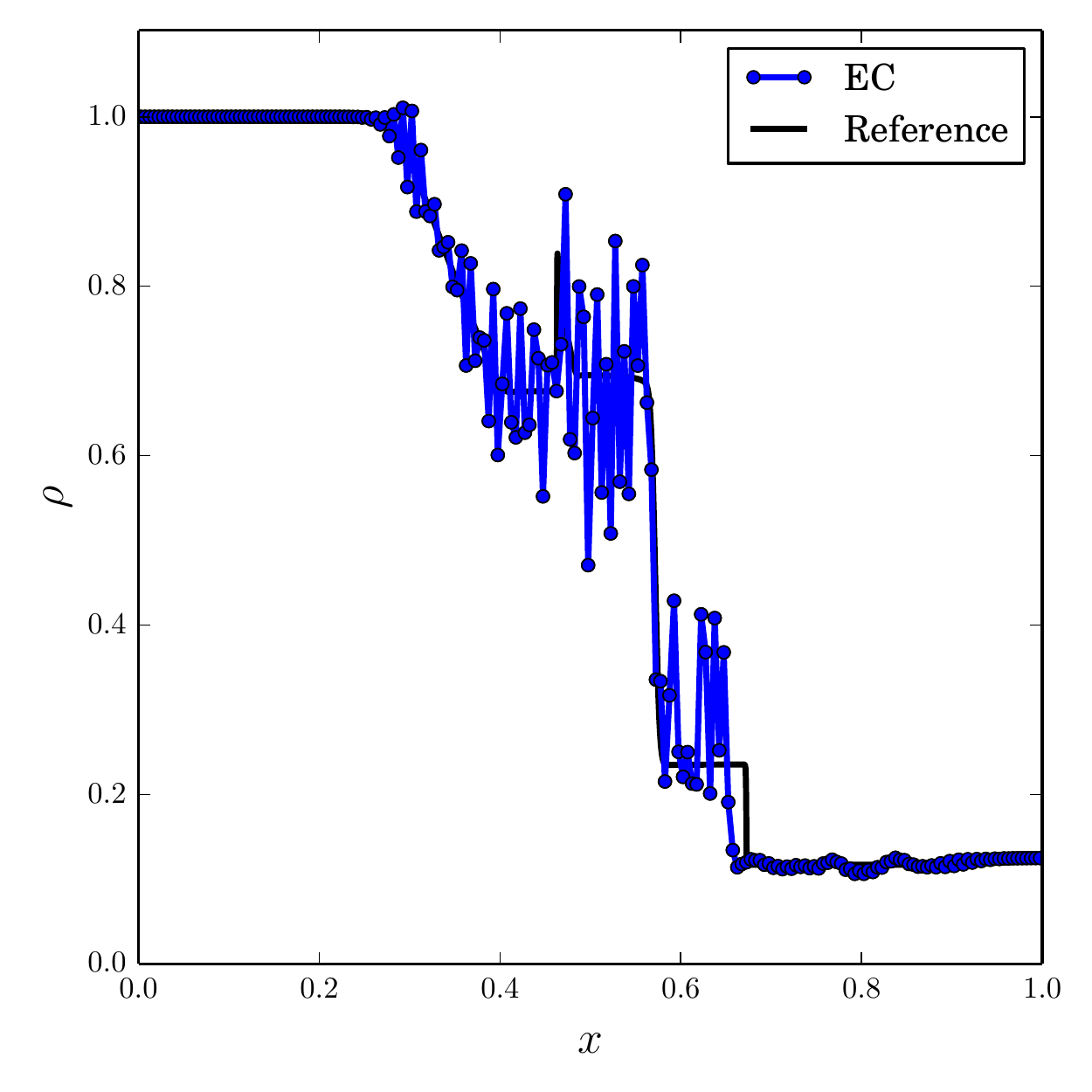}
}
{
\includegraphics[scale=0.55]{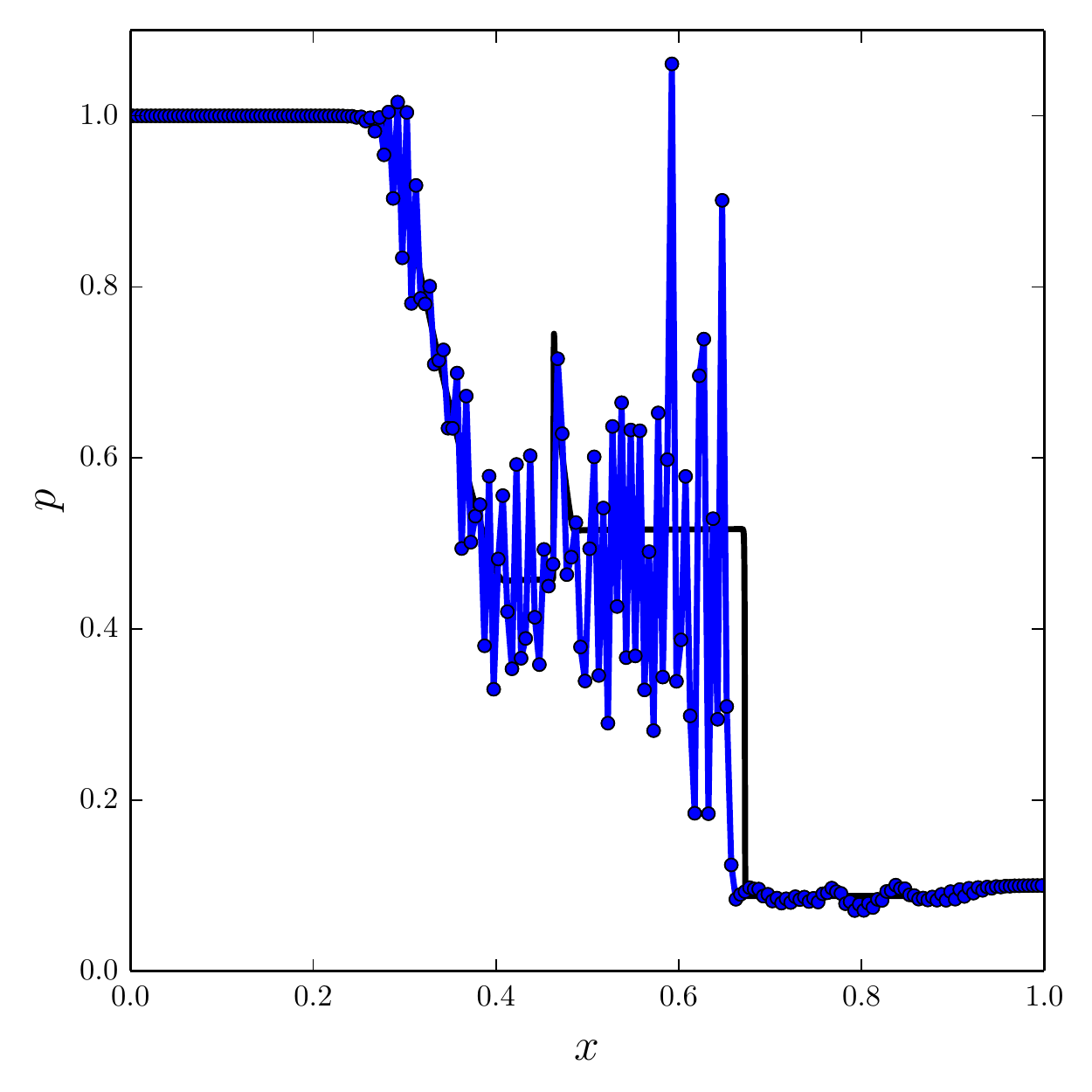}
}
\\
{
\includegraphics[scale=0.55]{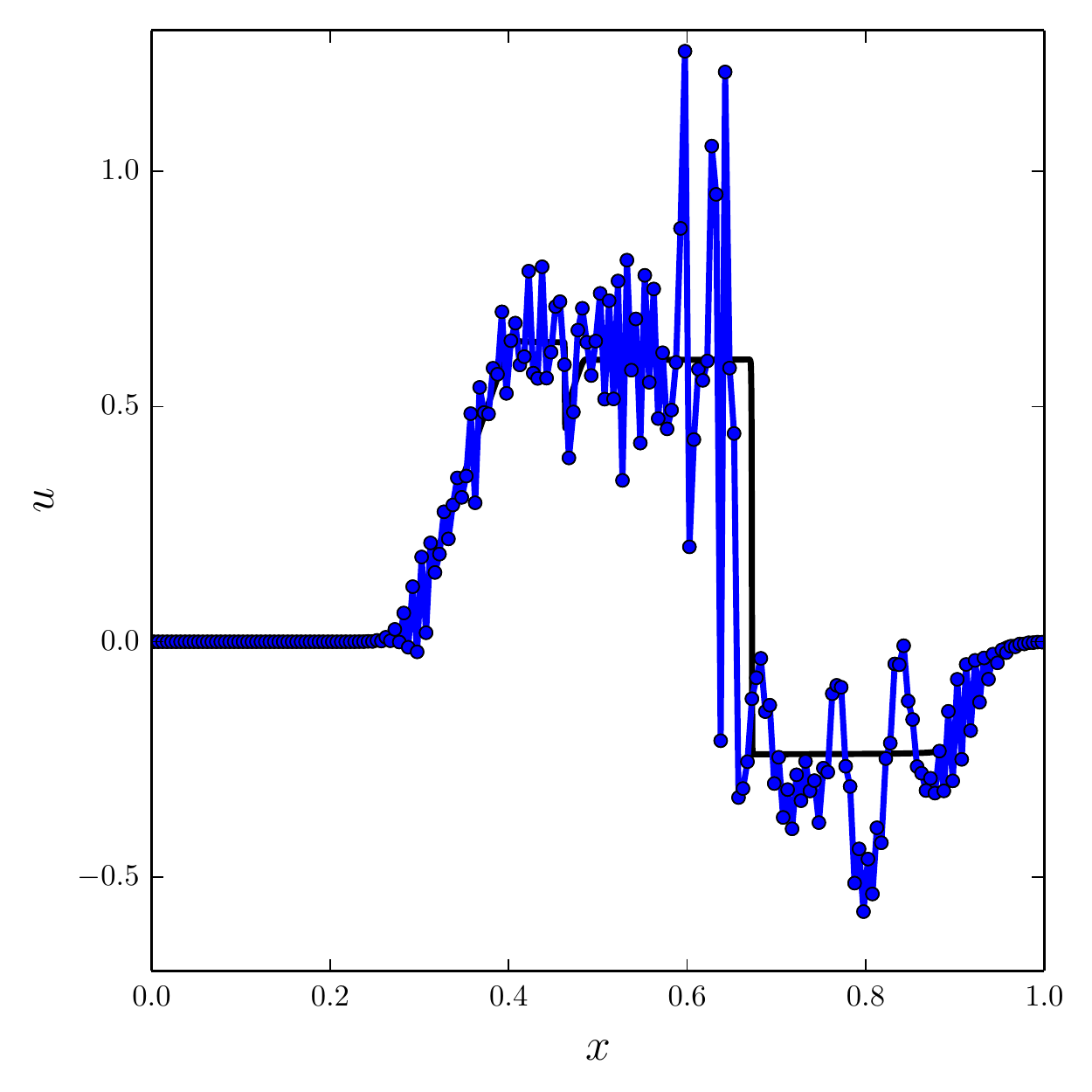}
}
{
\includegraphics[scale=0.55]{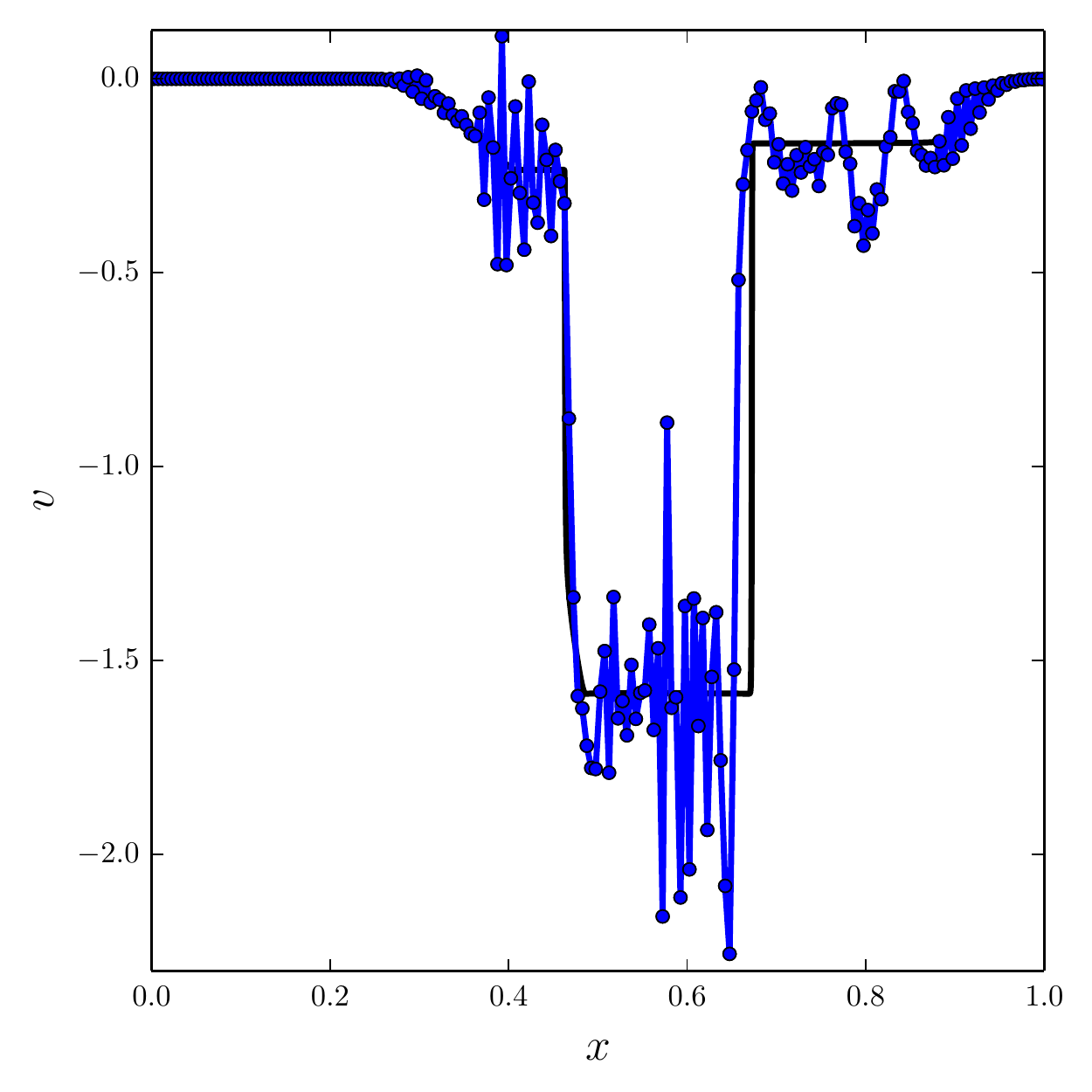}
}
\\
{
\includegraphics[scale=0.55]{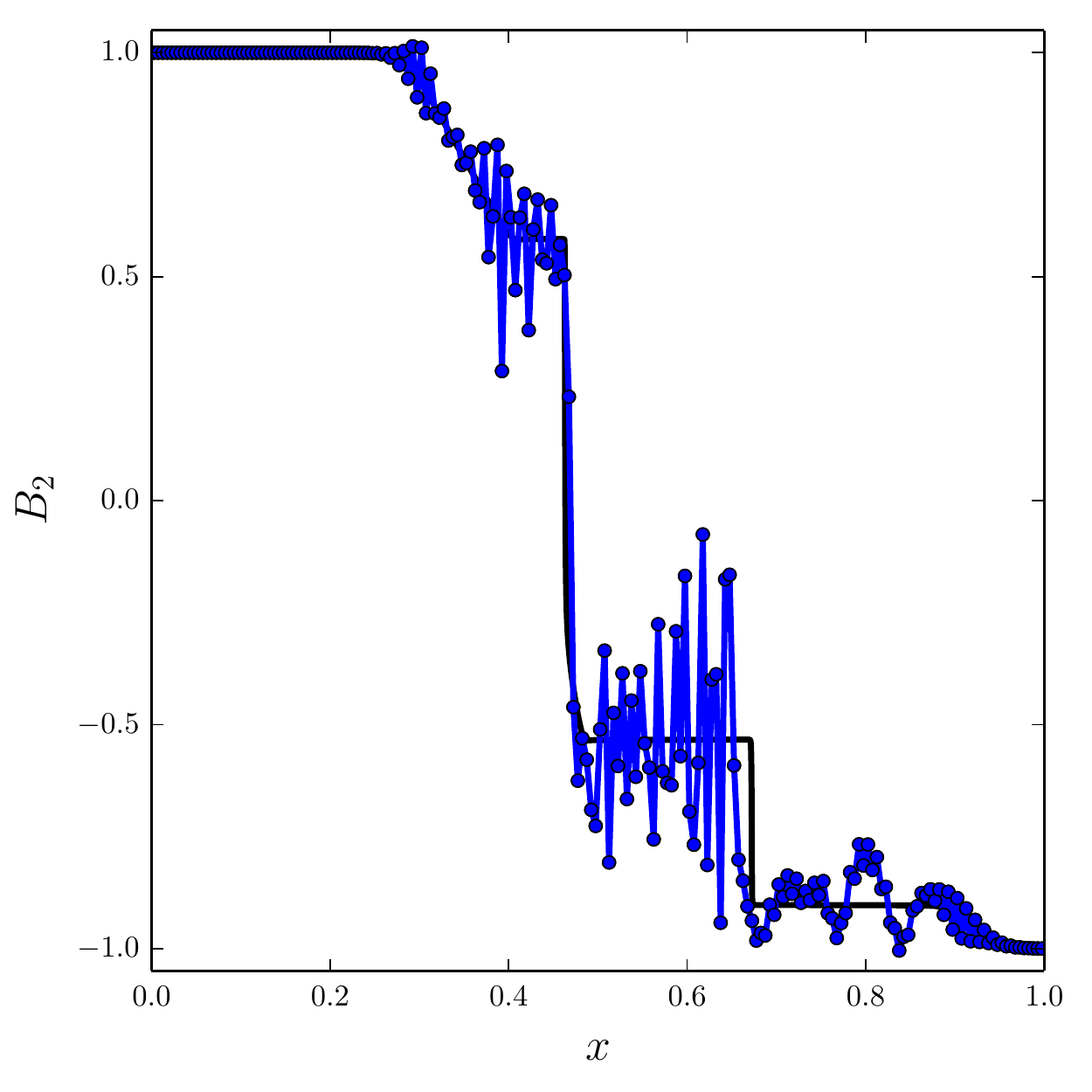}
}
\caption{In blue we present the entropy conserving approximations for the Brio and Wu shock tube of the density $\rho$, pressure $p$, $x$ and $y$ velocity components, and the $y$ of the magnetic field at $T = 0.12$. Solid black represents the reference solution.}
\label{fig:EC_Brio}
\end{center}
\end{figure}

\subsubsection{Ryu and Jones Riemann Problem}\label{RyuEC}

Figure \ref{fig:EC_Ryu} presents the computed EC solution for the Ryu and Jones Riemann problem. The EC method captures the fast/slow rarefractions and shocks well. But, again, there are large oscillations generated in the post-shock regions.
\begin{figure}[!ht]
\begin{center}
{
\includegraphics[scale=0.575]{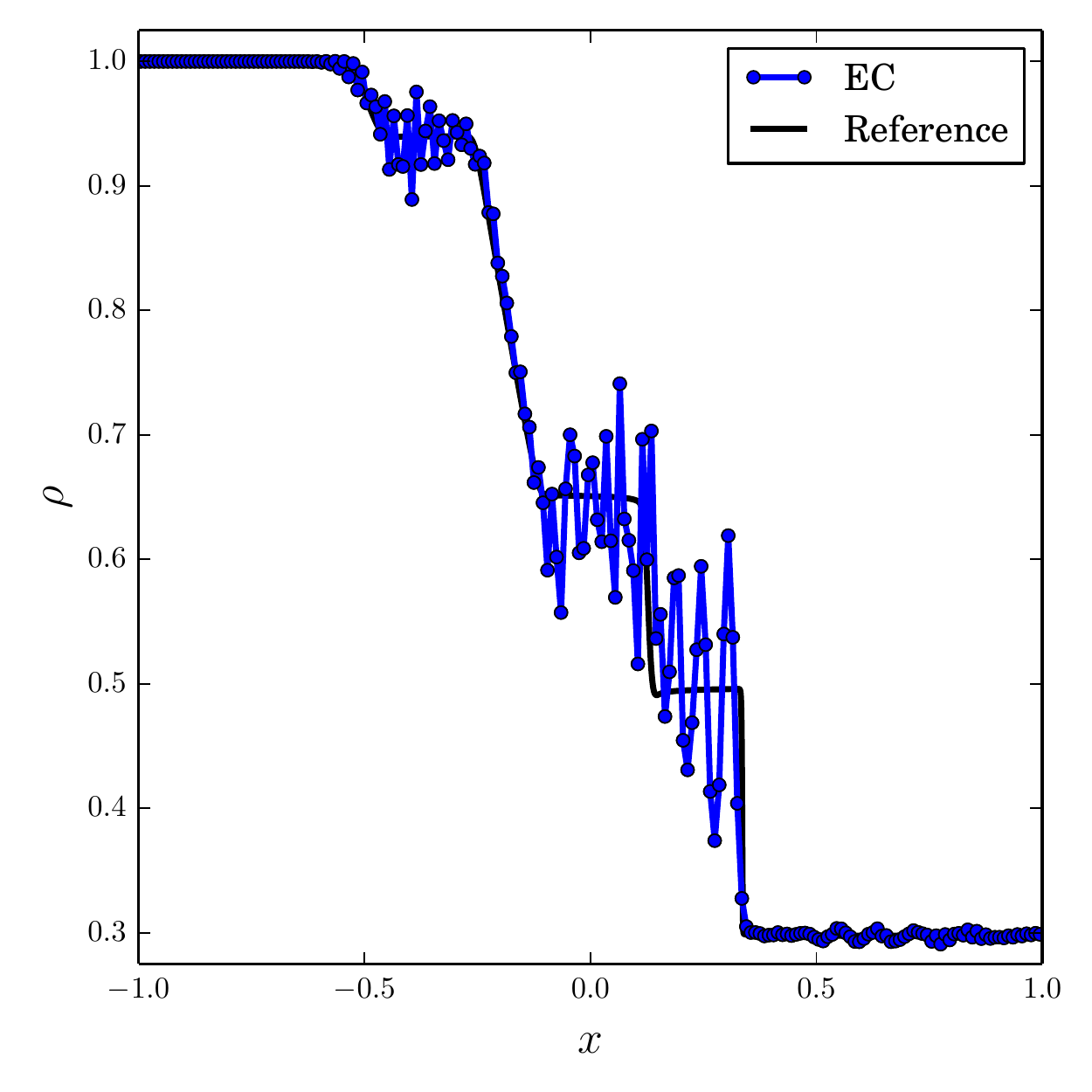}
}
{
\includegraphics[scale=0.575]{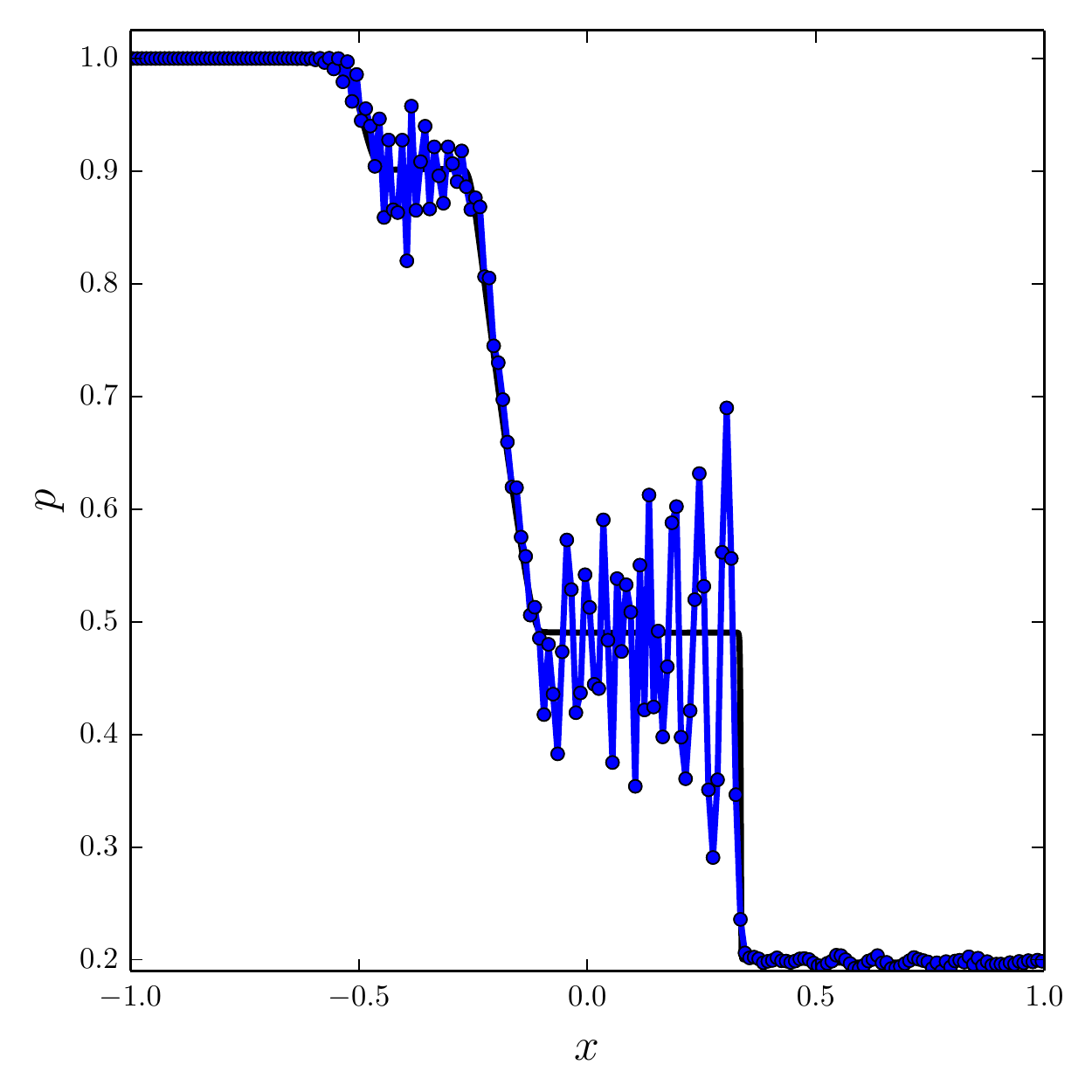}
}
\\
{
\includegraphics[scale=0.415]{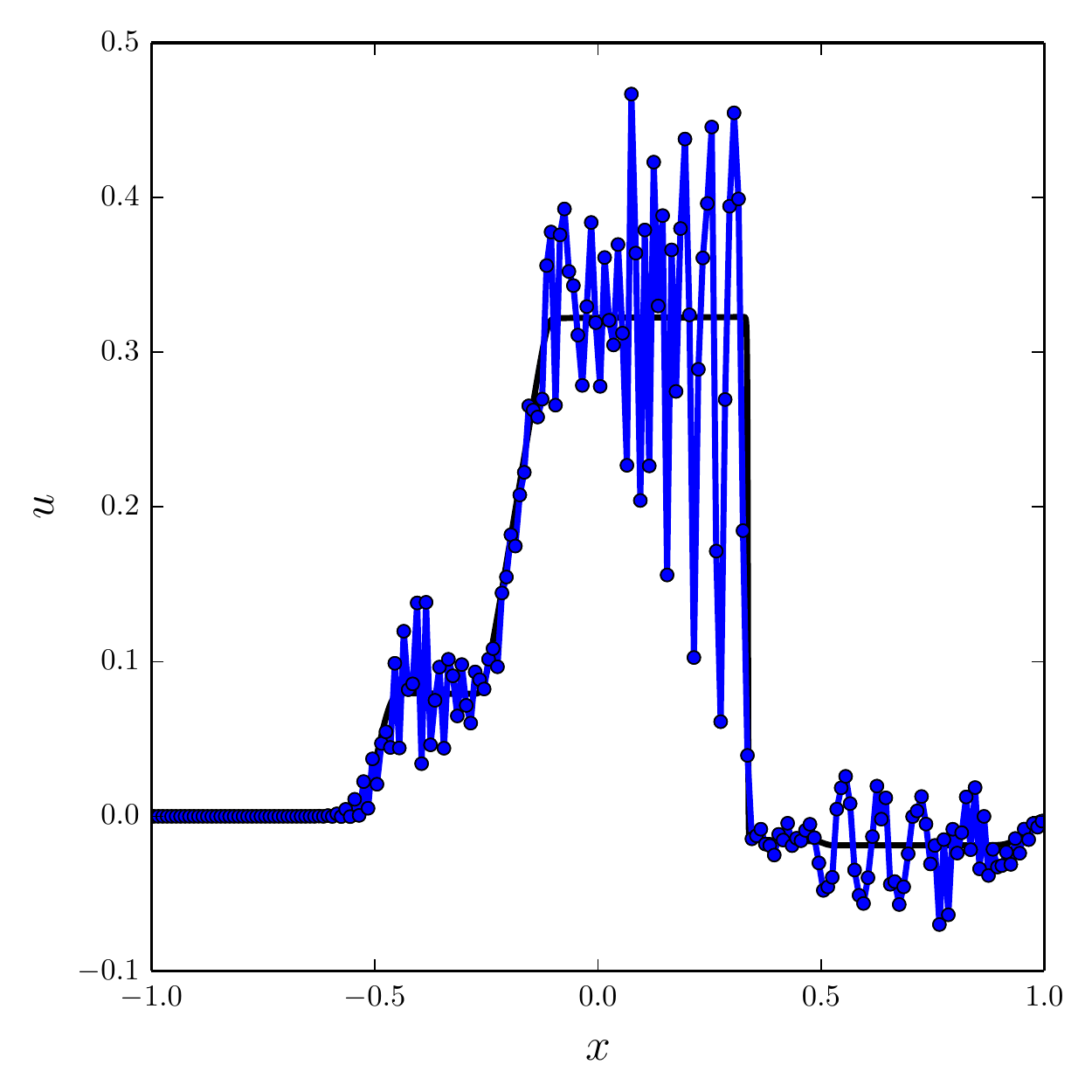}
}
{
\includegraphics[scale=0.415]{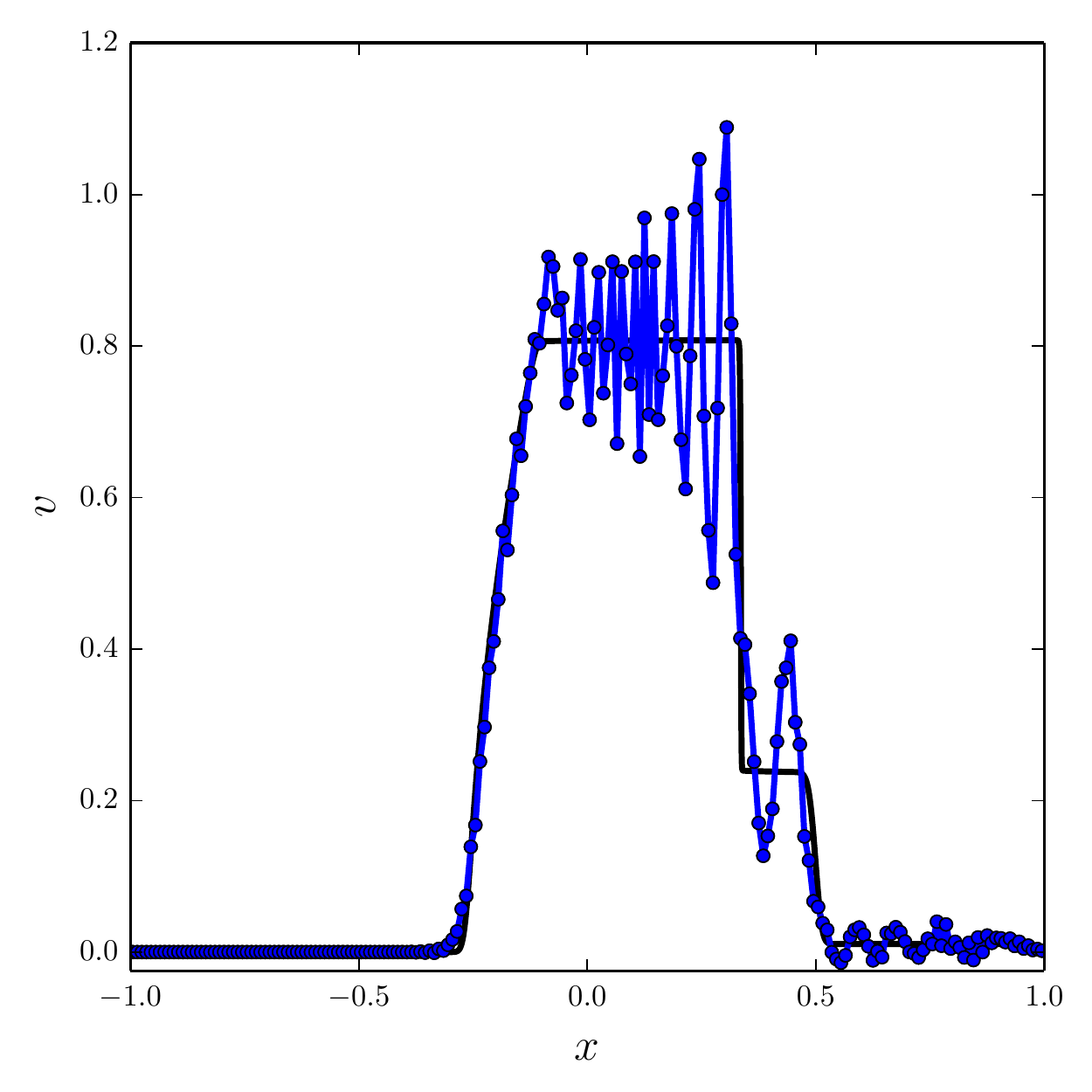}
}
{
\includegraphics[scale=0.415]{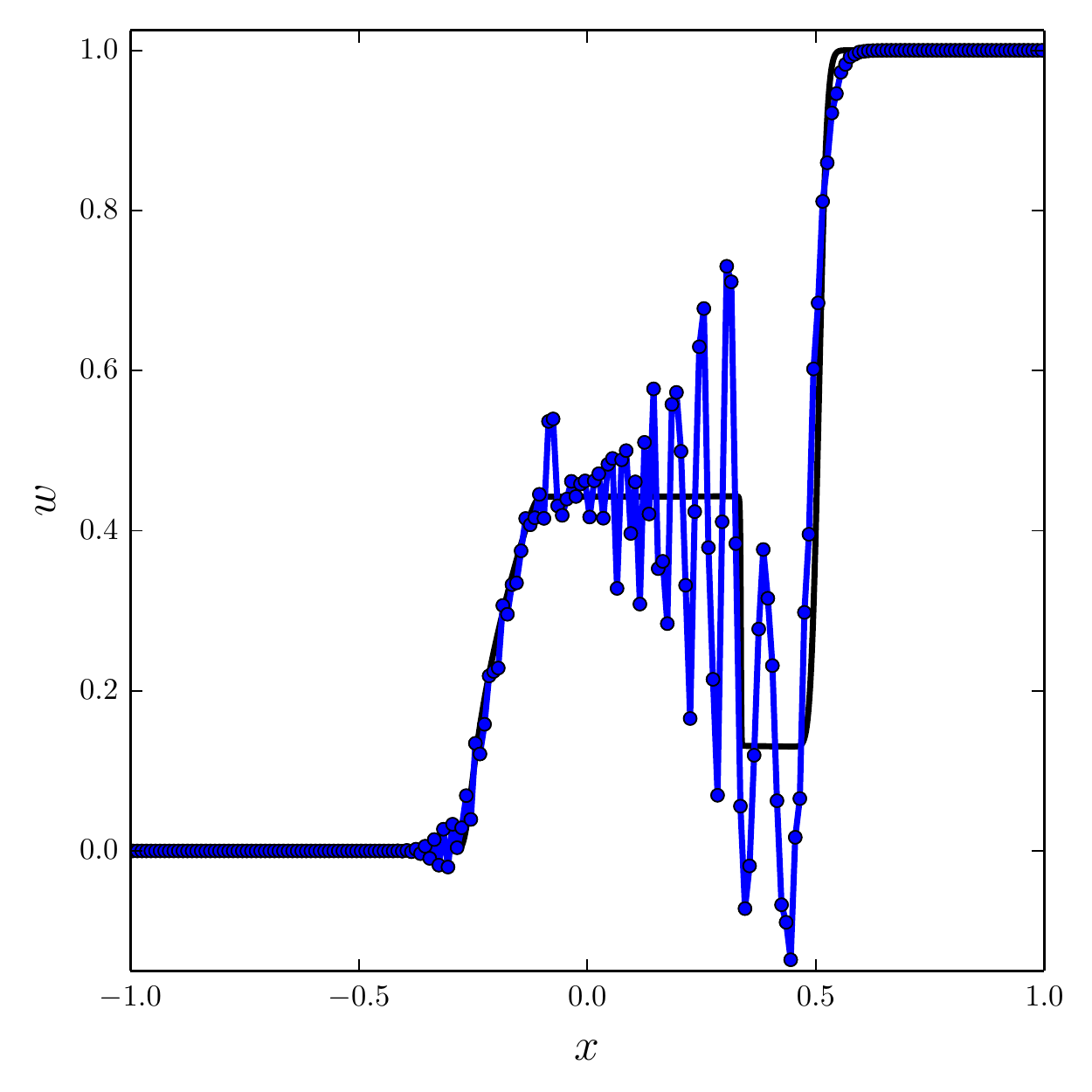}
}
\\
{
\includegraphics[scale=0.575]{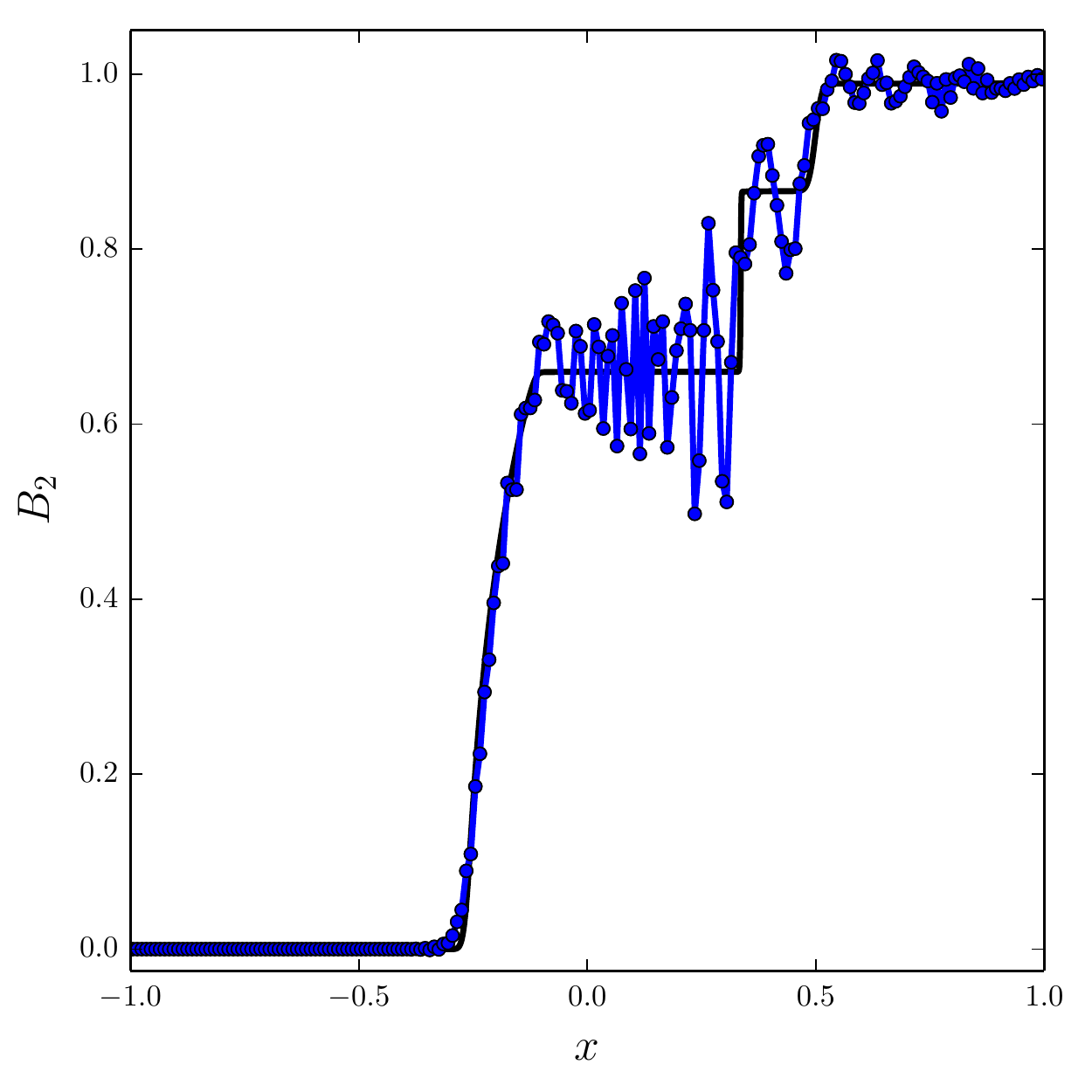}
}
{
\includegraphics[scale=0.575]{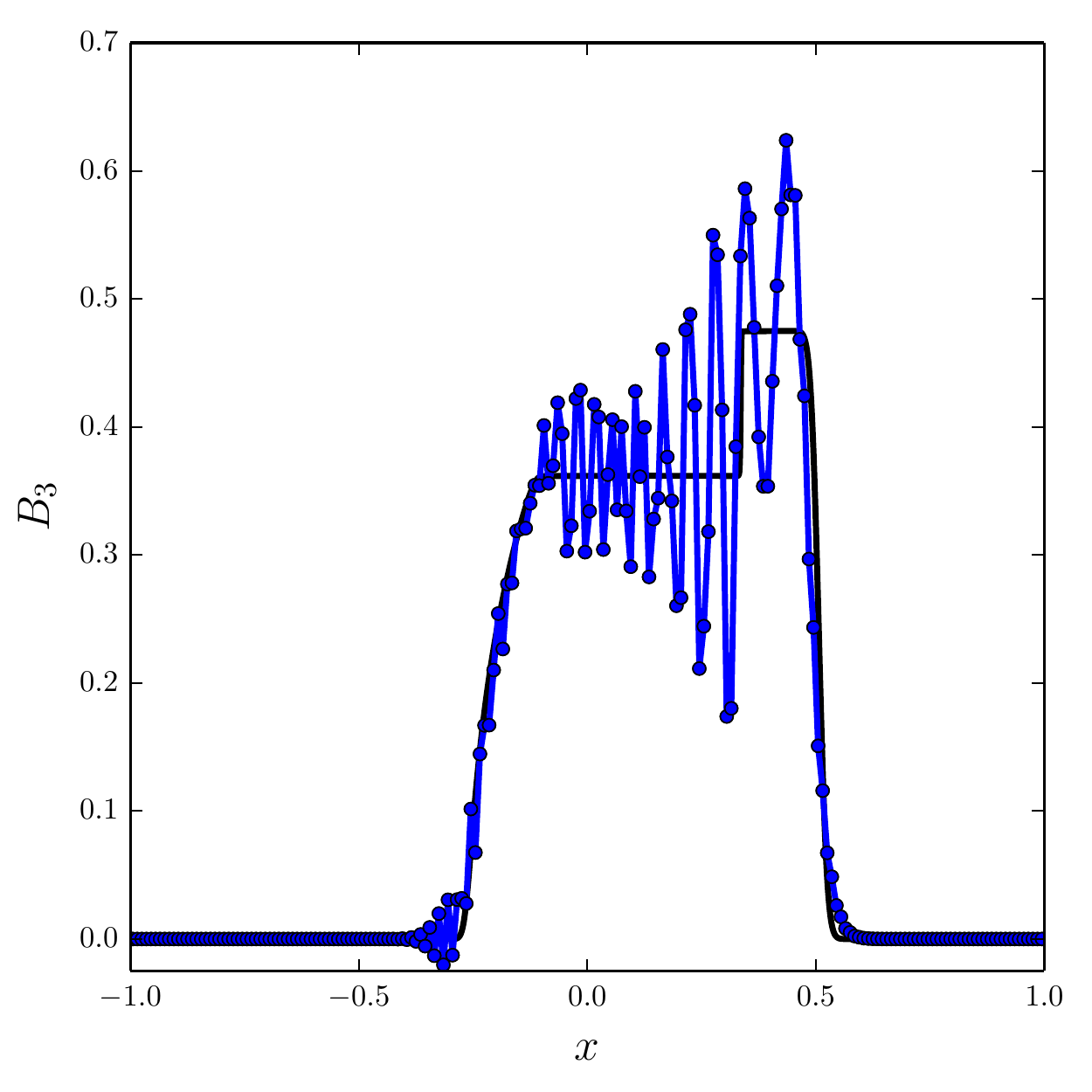}
}
\caption{We present, in blue, the entropy conserving approximations of the Ryu and Jones Riemann problem density $\rho$, pressure $p$, $x$, $y$, and $z$ velocity components, and the $y$ and $z$ components of the magnetic field at $T = 0.4$. Solid black represents the reference solution.}
\label{fig:EC_Ryu}
\end{center}
\end{figure}

\subsubsection{Torrilhon Riemann Problem}\label{TorrEC}

Last of the EC computations, we show the results of the EC approximate solution for the Torrilhon Riemann problem in Fig. \ref{fig:EC_Tor}. This computed solution tracks the complex features of the flow, but there large oscillations are still present, as we have come to expect for a discretely entropy conserving approximation.
\begin{figure}[!ht]
\begin{center}
{
\includegraphics[scale=0.575]{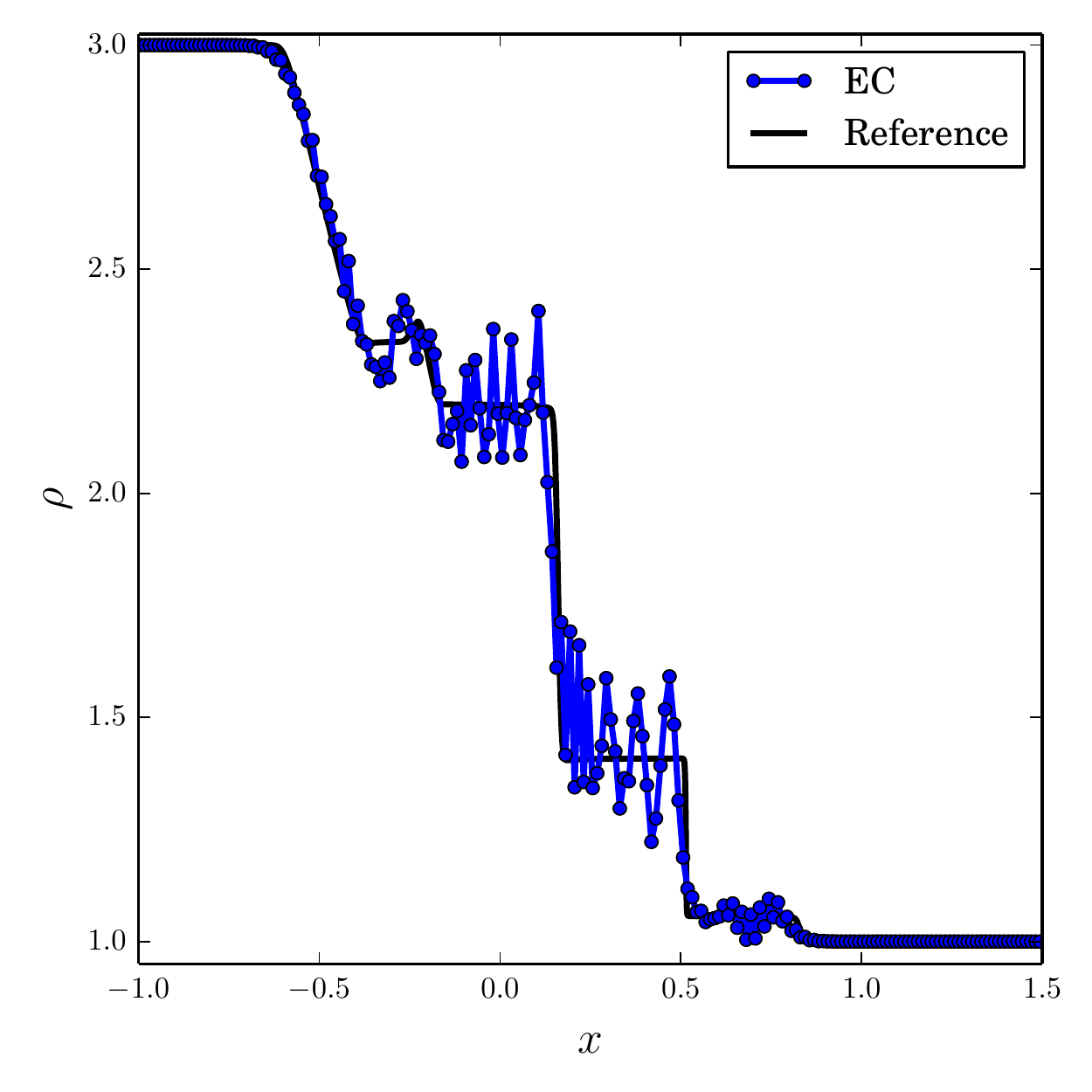}
}
{
\includegraphics[scale=0.575]{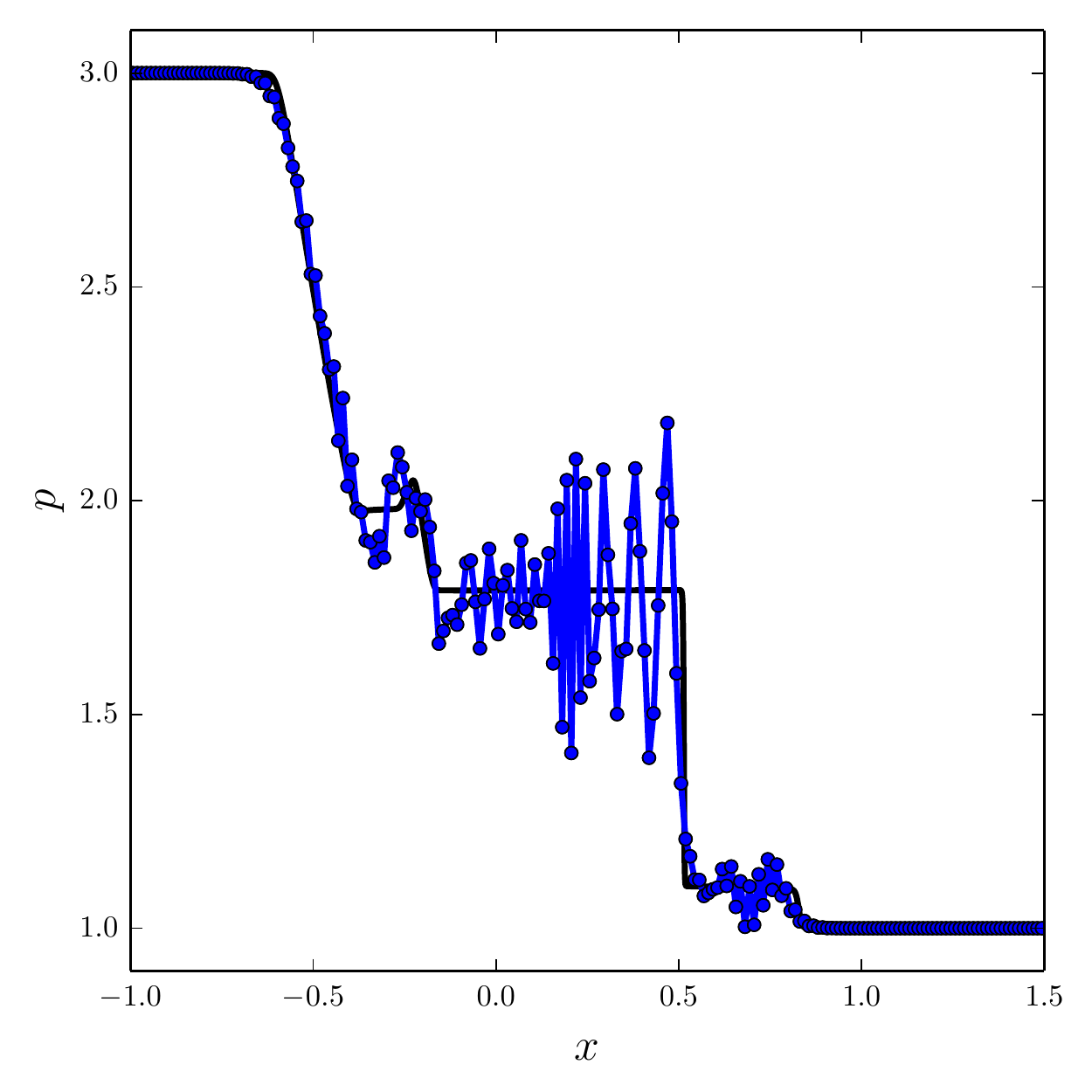}
}
\\
{
\includegraphics[scale=0.415]{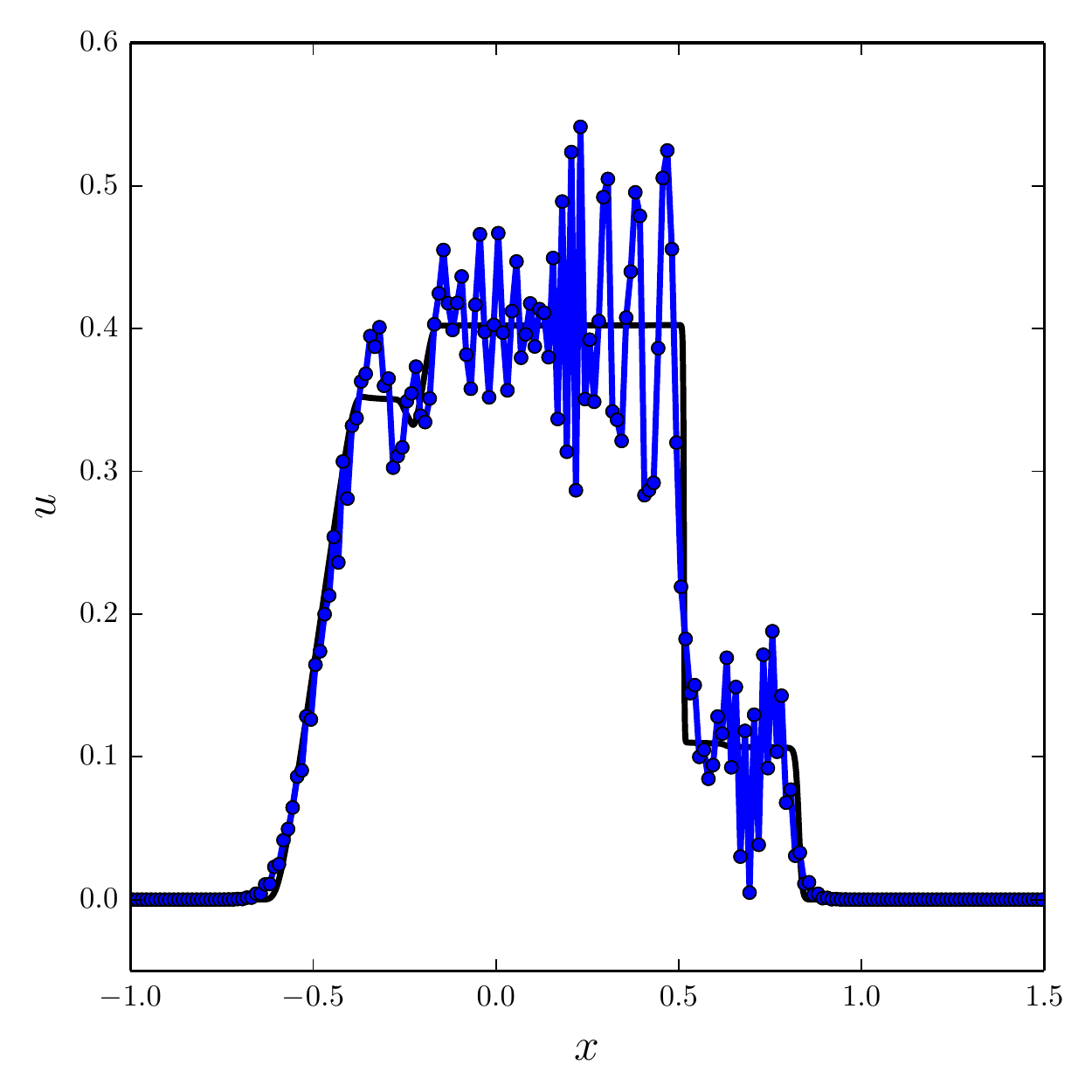}
}
{
\includegraphics[scale=0.415]{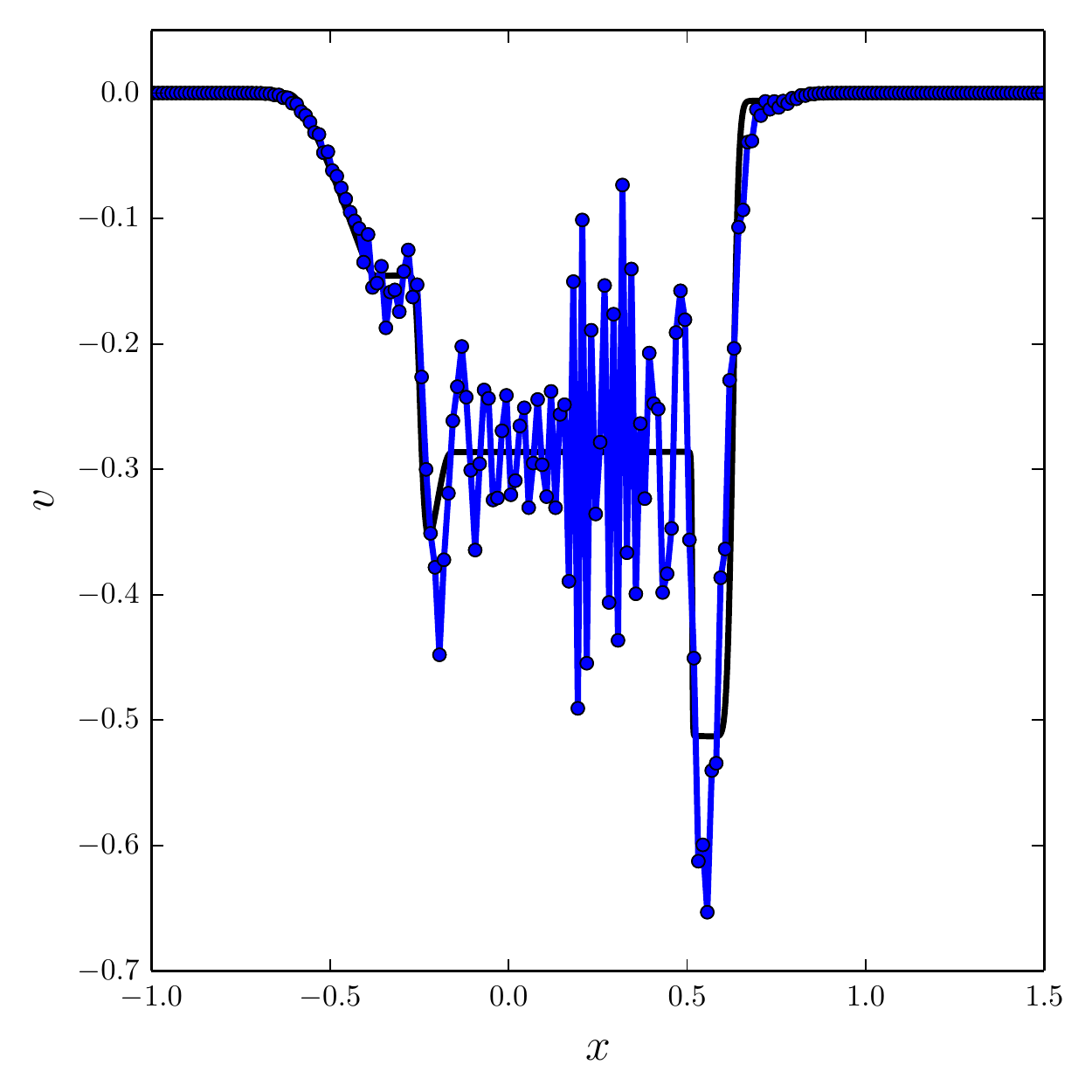}
}
{
\includegraphics[scale=0.415]{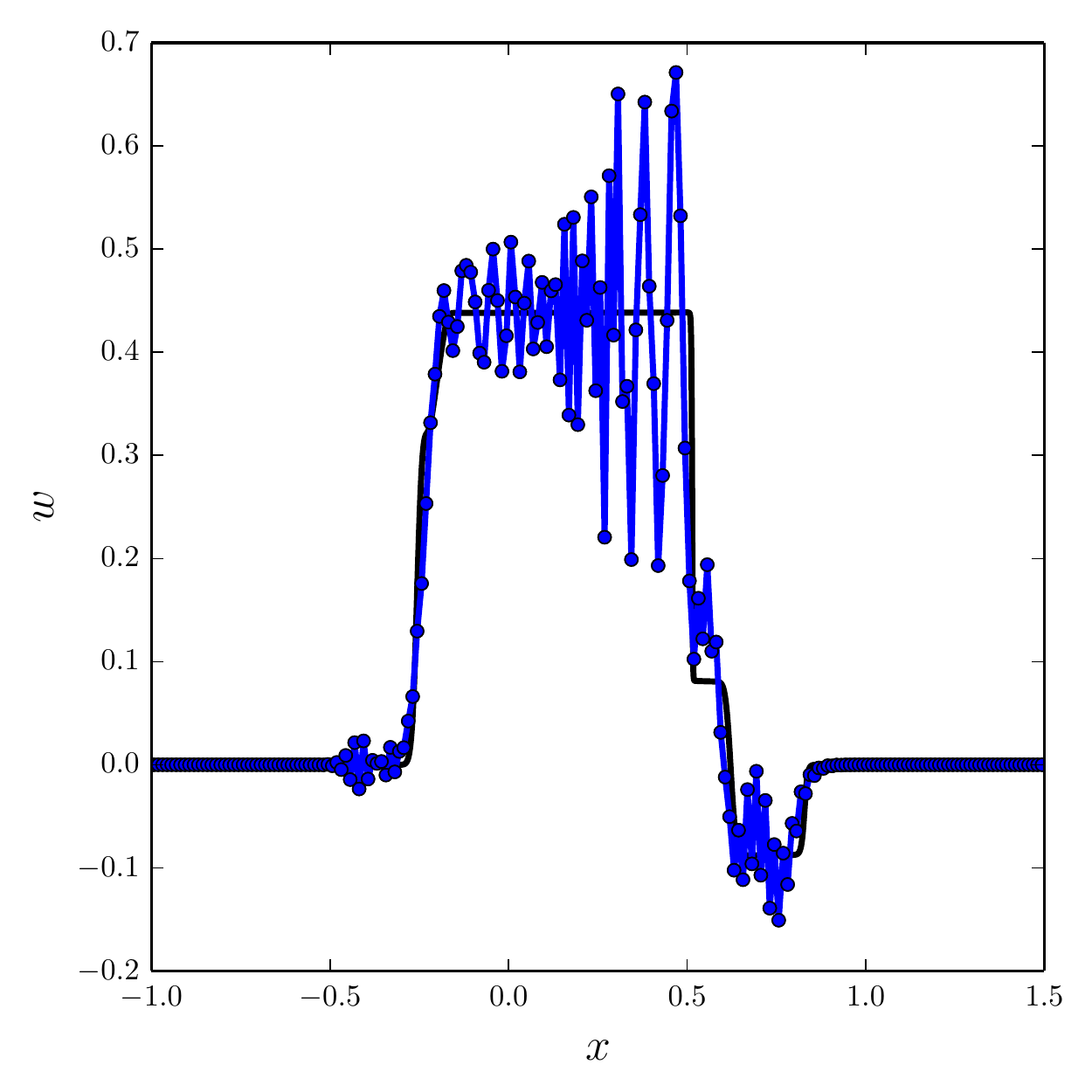}
}
\\
{
\includegraphics[scale=0.575]{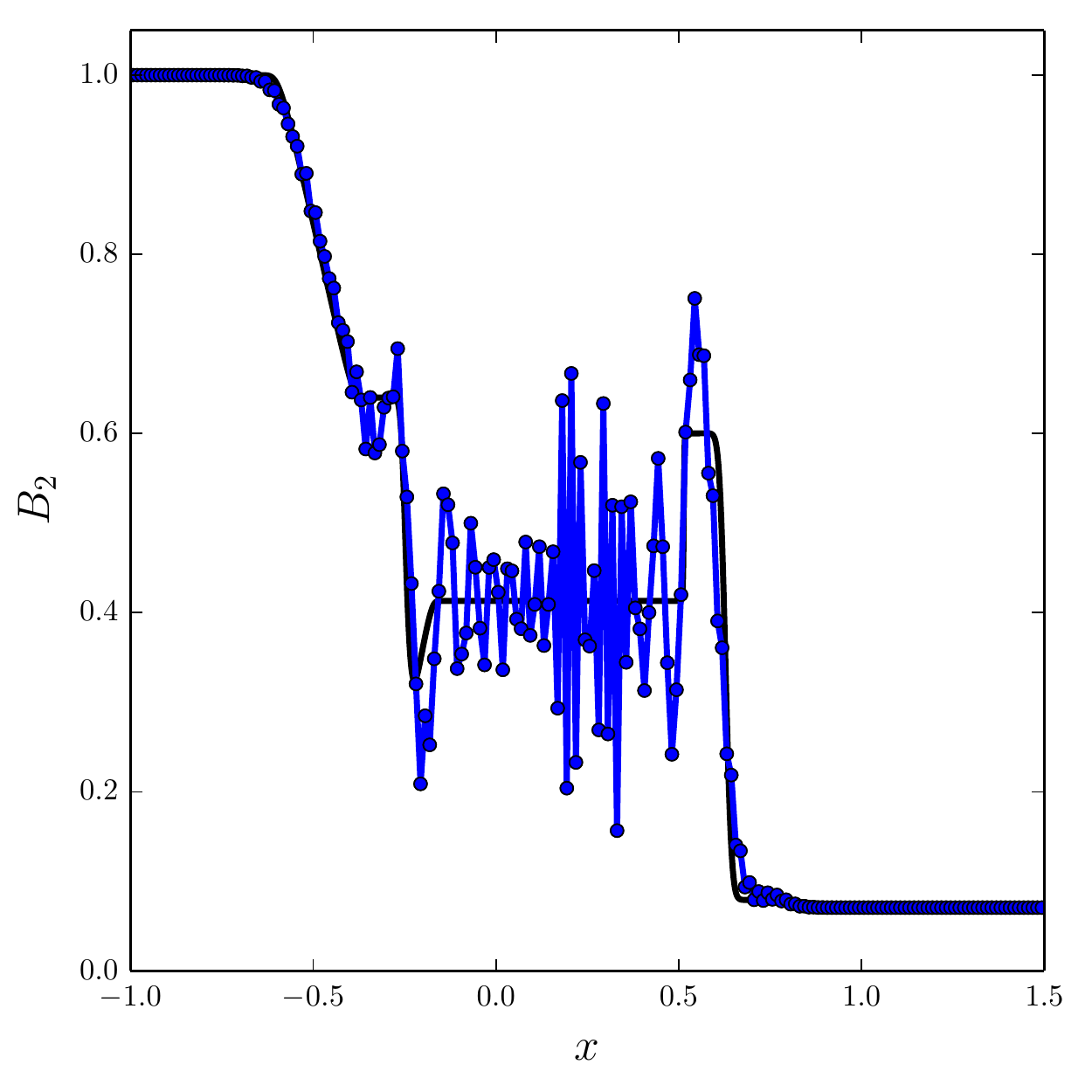}
}
{
\includegraphics[scale=0.575]{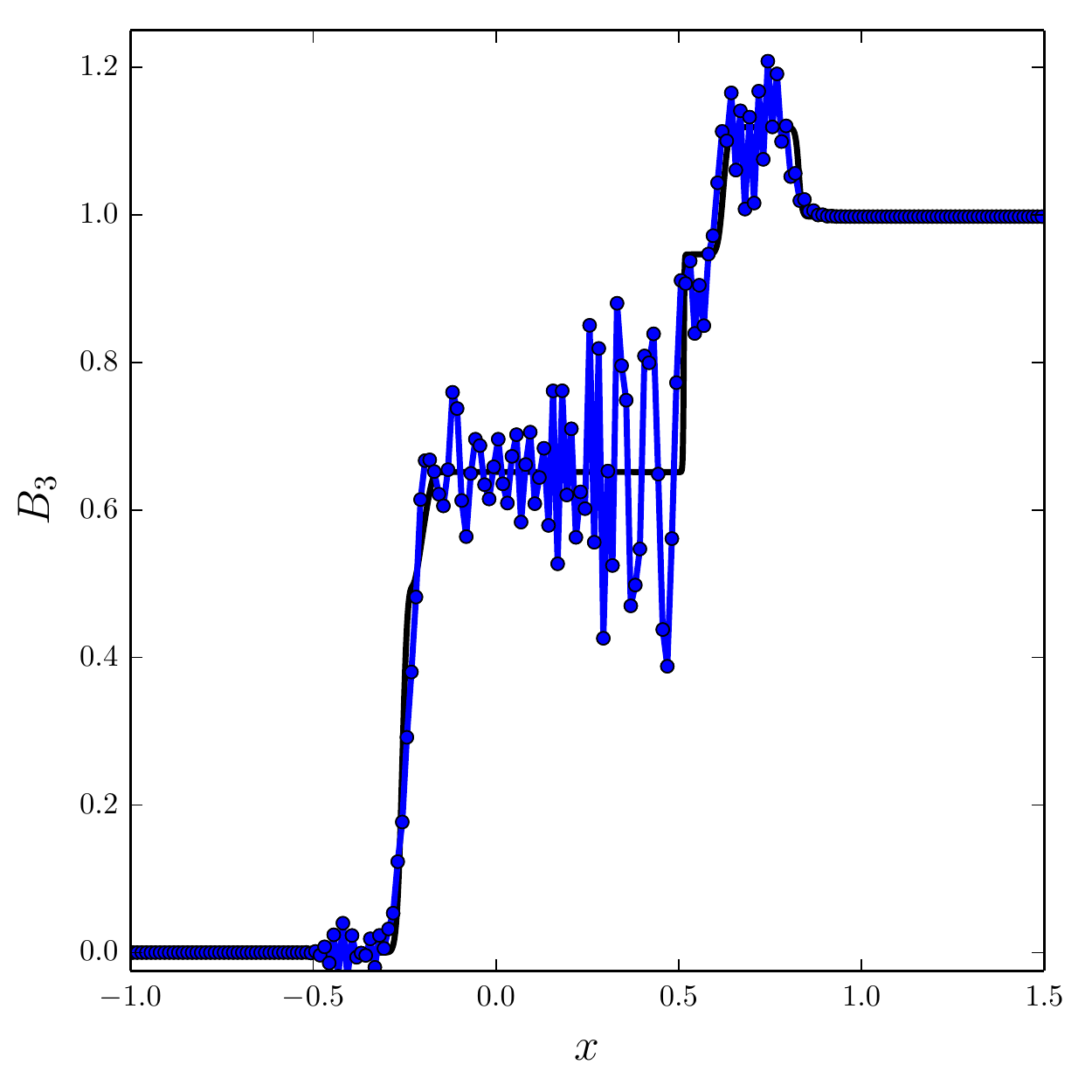}
}
\caption{The computed entropy conserving approximations (in blue) of the density $\rho$, pressure $p$, $x$, $y$, and $z$ velocity components, and the $y$ and $z$ components of the magnetic field at $T = 0.4$ for the Torrilhon Riemann problem. Solid black represents the reference solution.}
\label{fig:EC_Tor}
\end{center}
\end{figure}

\subsection{Entropy Stable Riemann Problem}\label{ESRiemann}

For the final set of numerical tests we use the ES-Roe and ES-LLF schemes to compute the solution of the three Riemann problems with inflow/outflow type boundary conditions. We then compare the results to those present in the literature. The computation is performed on 200 regular grid cells with CFL number 0.1 up to a final time dictated by the test problem. Again, we use a reference solution created using a high-resolution run of the ES-Roe scheme on 5000 grid cells. 

\subsubsection{Brio and Wu Shock Tube}\label{BrioWuES}

Figure \ref{fig:ES_Brio} shows results for the ES-Roe and ES-LLF finite volume schemes applied to the Brio and Wu shock tube problem. Both entropy stable methods capture the complex behavior, but we see that the ES-Roe scheme is less dissipative than the ES-LLF scheme. The ES-LLF scheme has difficulty capturing the sharp features of the compile MHD flow (like the slow compound wave in the $u$ velocity component). These computations compare well with those presented in the literature \cite{brio1988,powell1994}.
\begin{figure}[!ht]
\begin{center}
{
\includegraphics[scale=0.55]{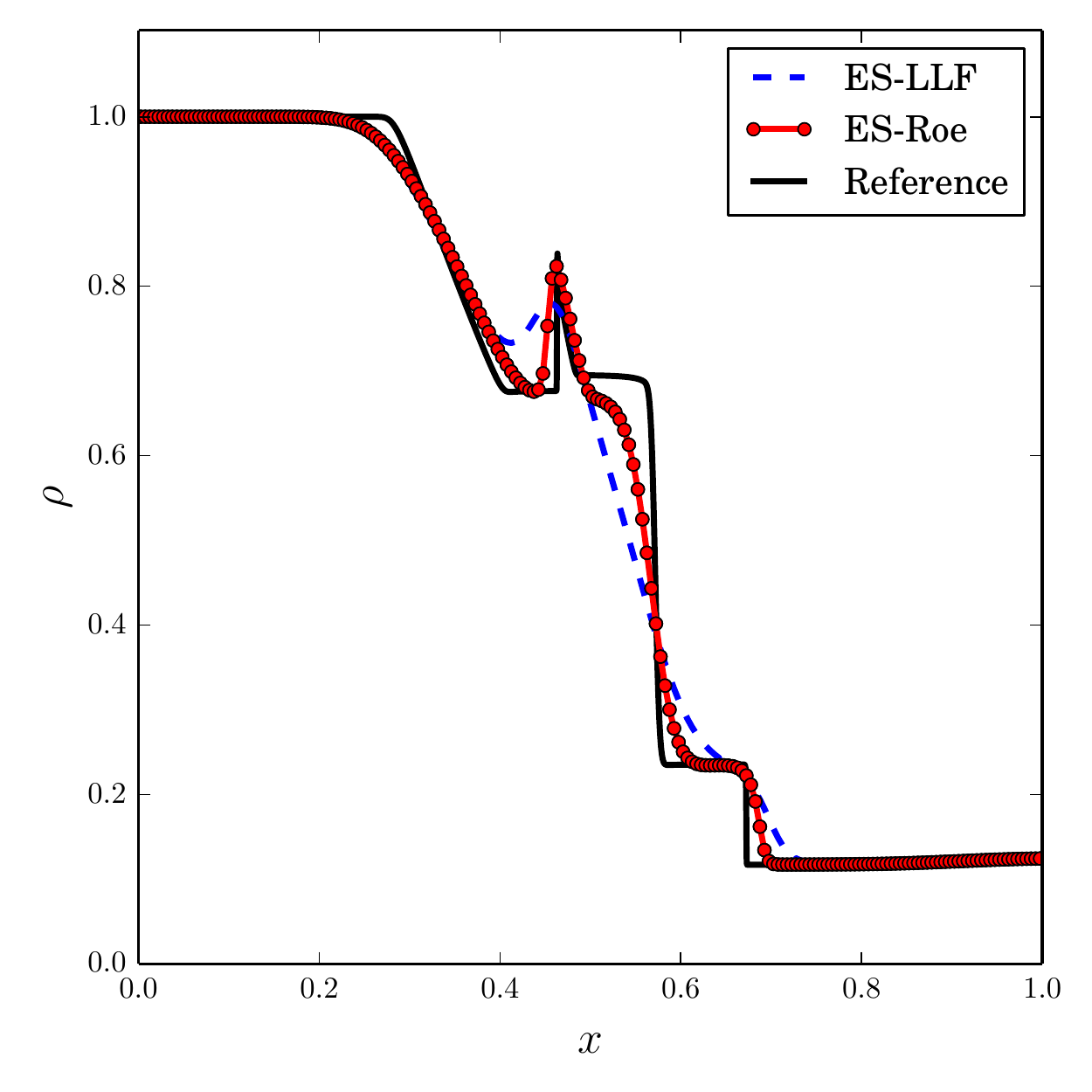}
}
{
\includegraphics[scale=0.55]{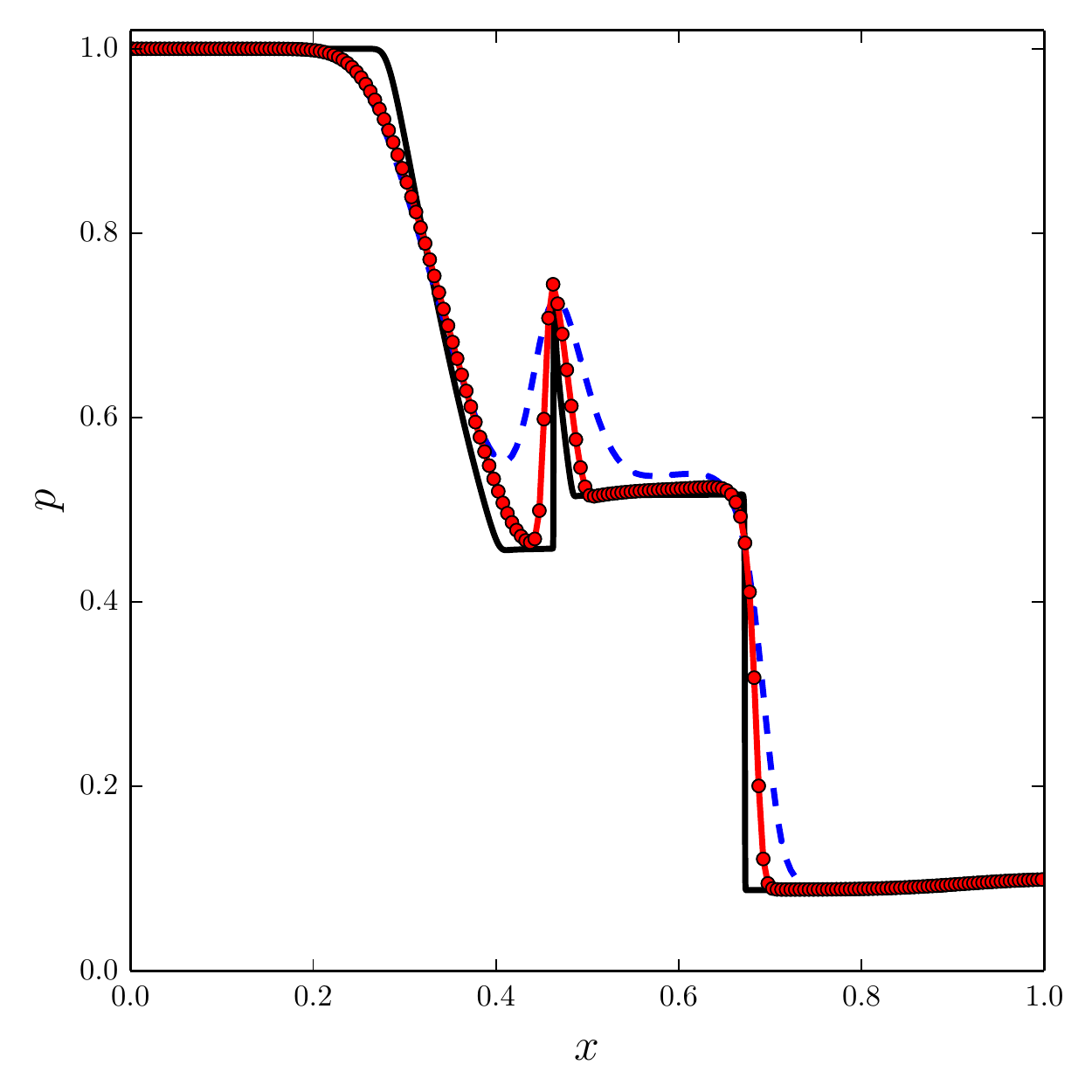}
}
\\
{
\includegraphics[scale=0.55]{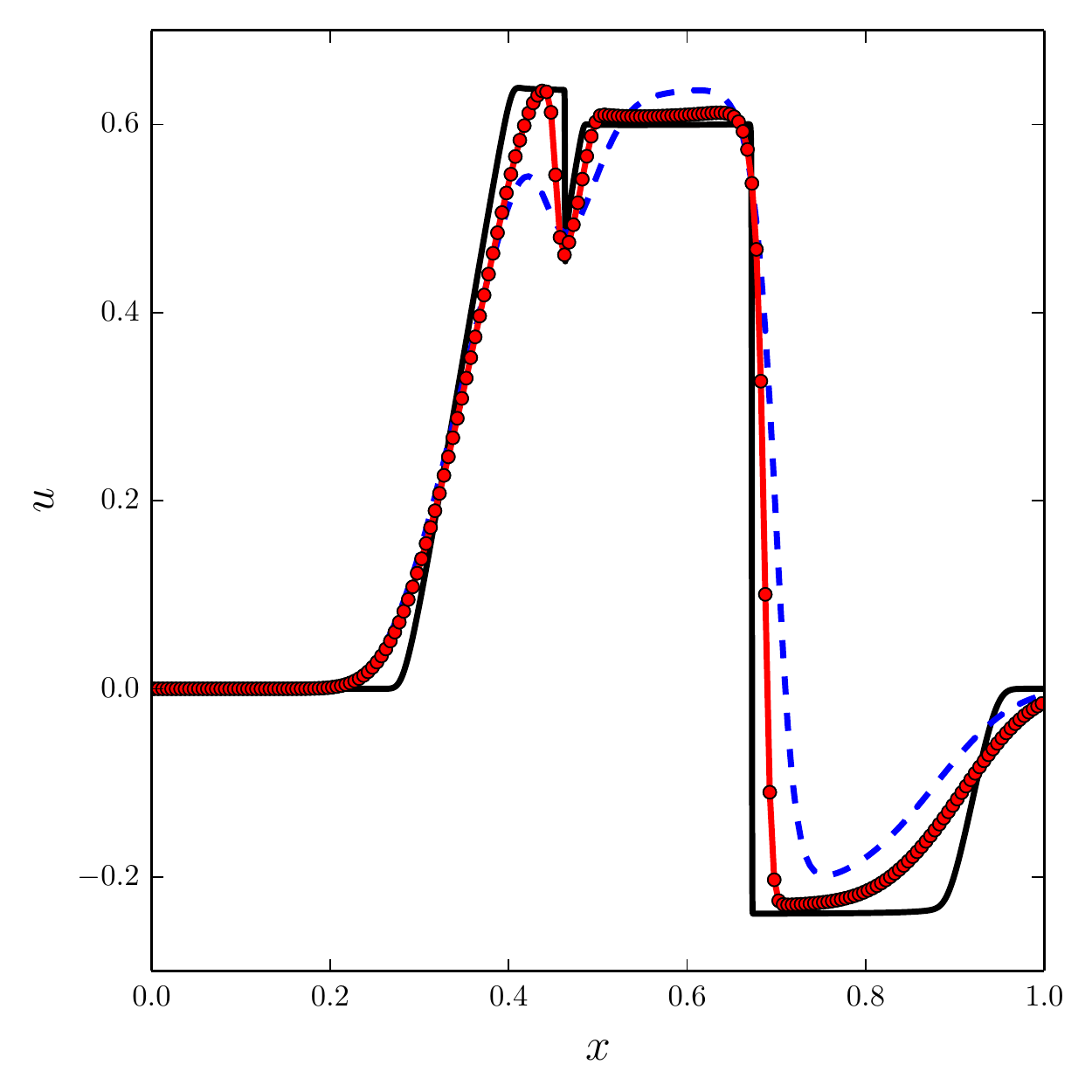}
}
{
\includegraphics[scale=0.55]{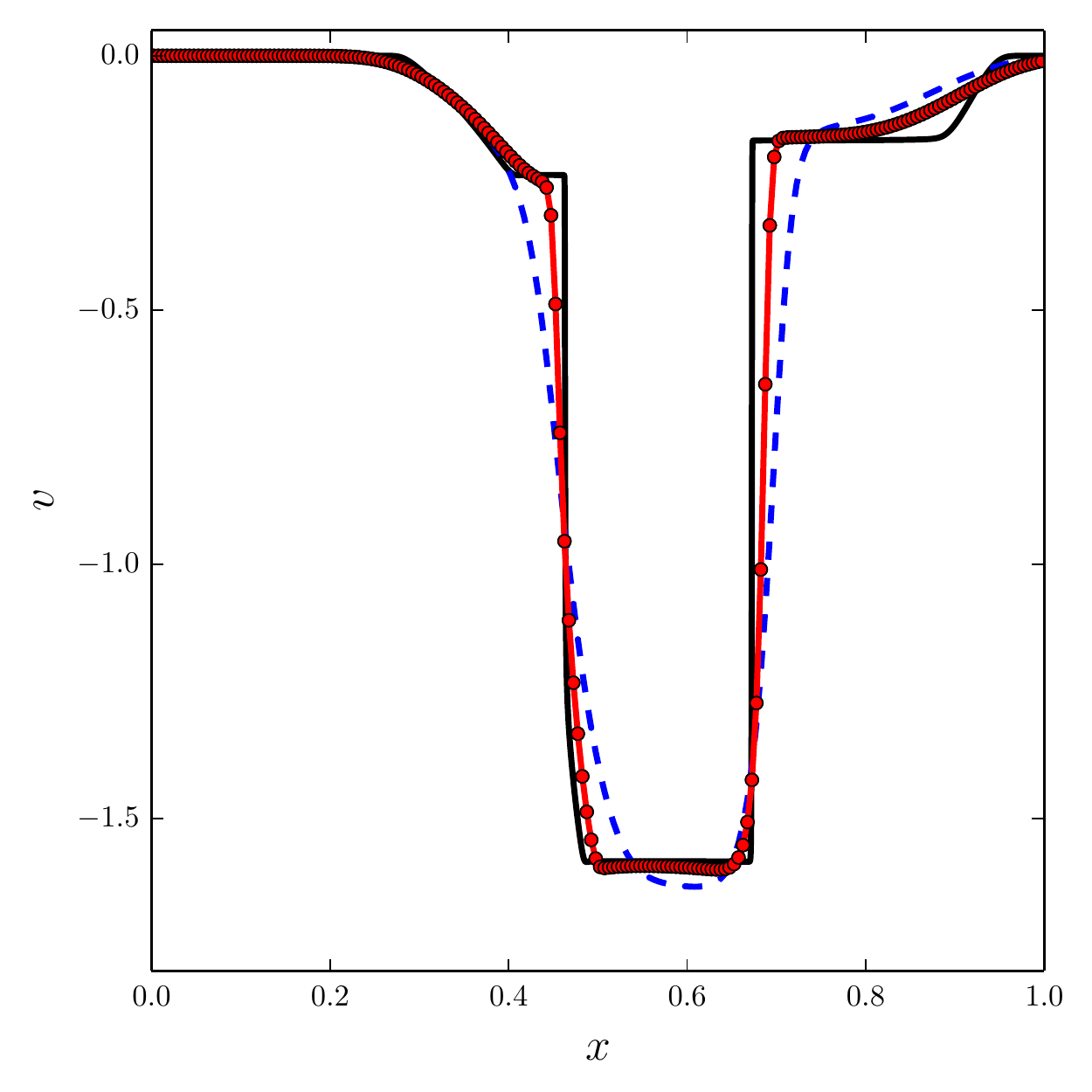}
}
\\
{
\includegraphics[scale=0.55]{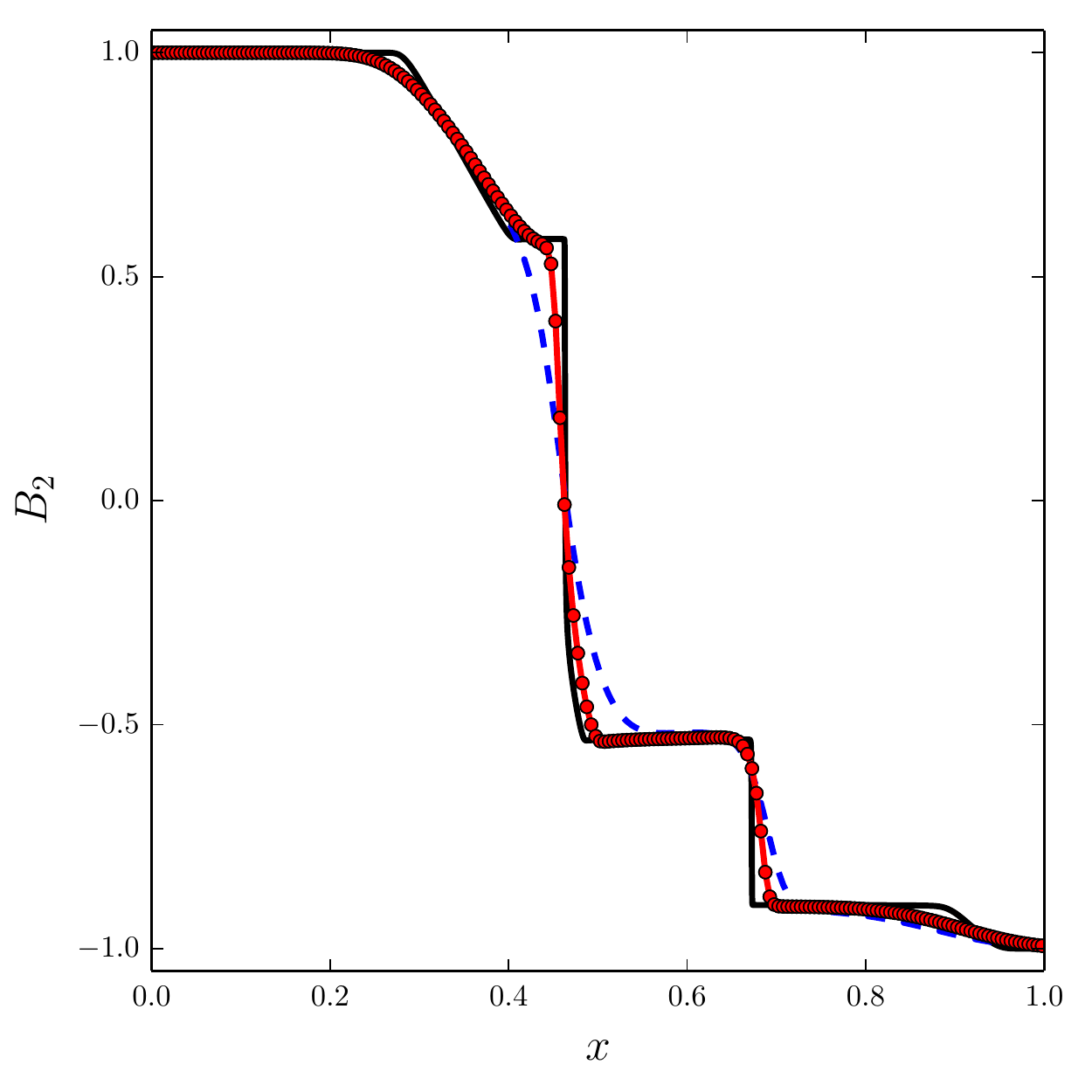}
}
\caption{The entropy stable approximations of the the density $\rho$, pressure $p$, $x$ and $y$ velocity components, and the $y$ of the magnetic field of the Brio and Wu shock tube at $T = 0.12$. Solid black is the reference solution, dashed blue is the ES-LLF scheme, and red with knots is the ES-Roe scheme.}
\label{fig:ES_Brio}
\end{center}
\end{figure}

\subsubsection{Ryu and Jones Riemann Problem}\label{RyuES}

Figure \ref{fig:ES_Ryu} presents the computed ES-Roe and ES-LLF solution for the Ryu and Jones Riemann problem. Each entropy stable scheme capture the complex behavior of the MHD flow and, for the weaker shocks, we see that there is less difference between the ES-Roe scheme and the more dissipative ES-LLF scheme. Our computations are, again, comparable to those found in the literature \cite{rossmanith2002,ryu1994}.
\begin{figure}[!ht]
\begin{center}
{
\includegraphics[scale=0.575]{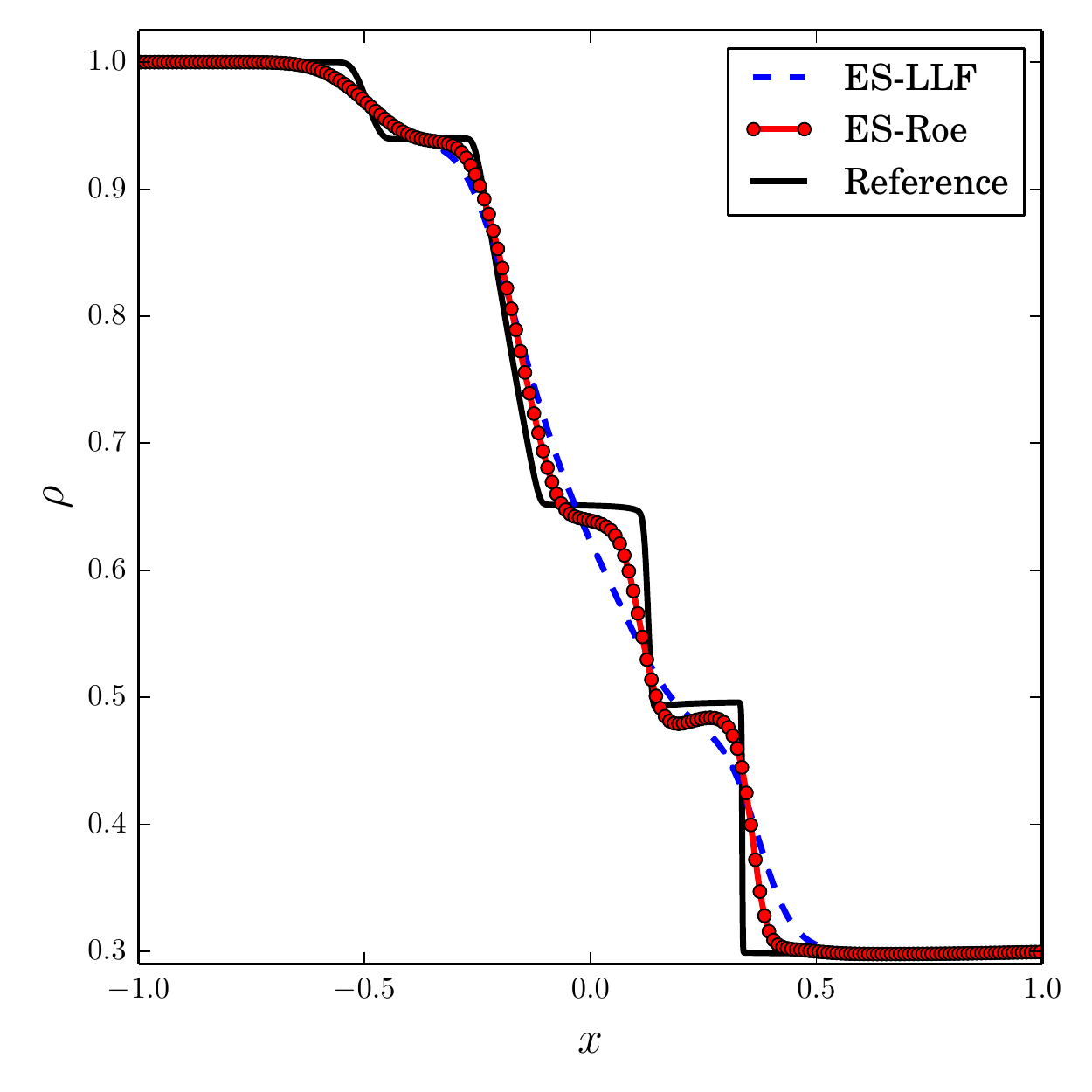}
}
{
\includegraphics[scale=0.575]{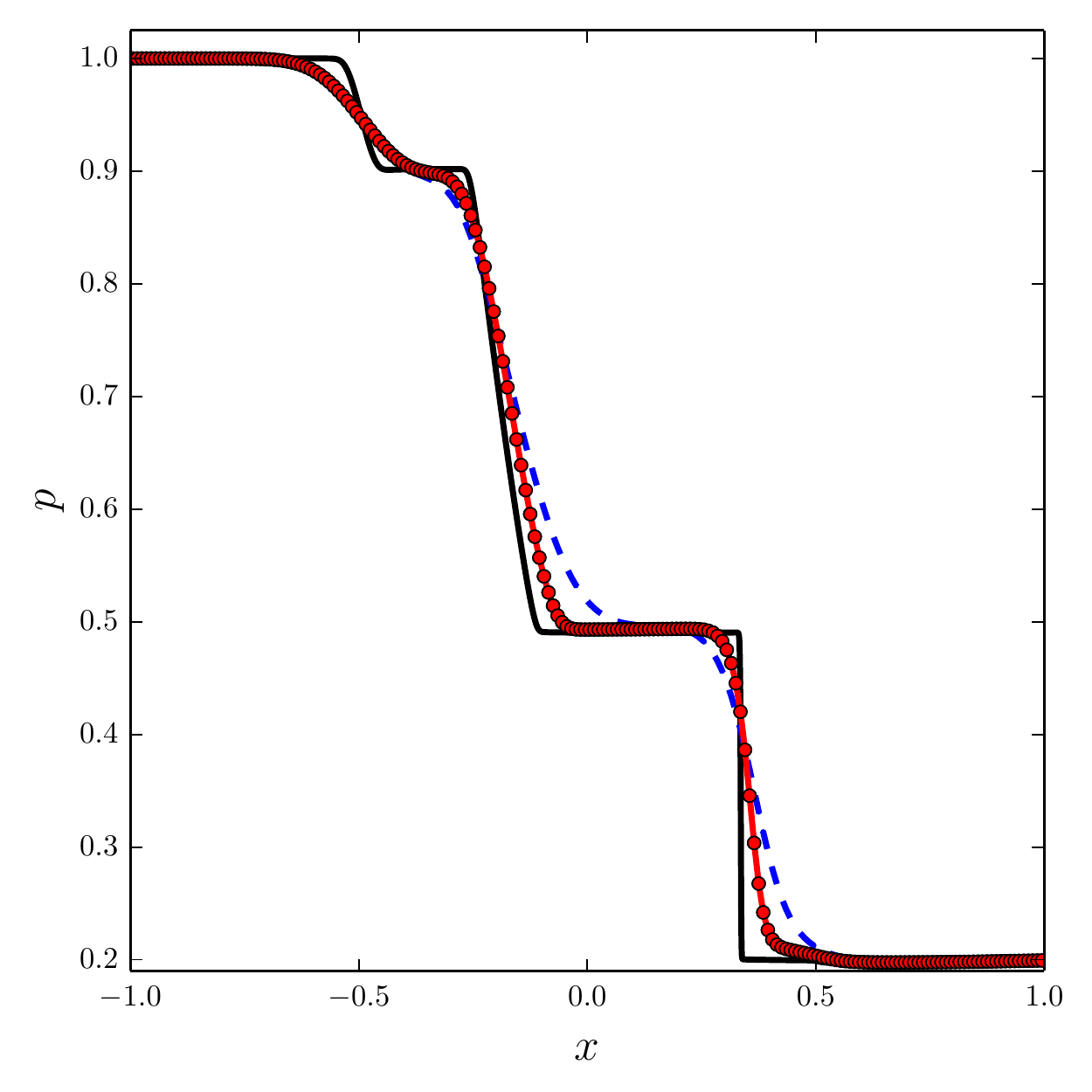}
}
\\
{
\includegraphics[scale=0.415]{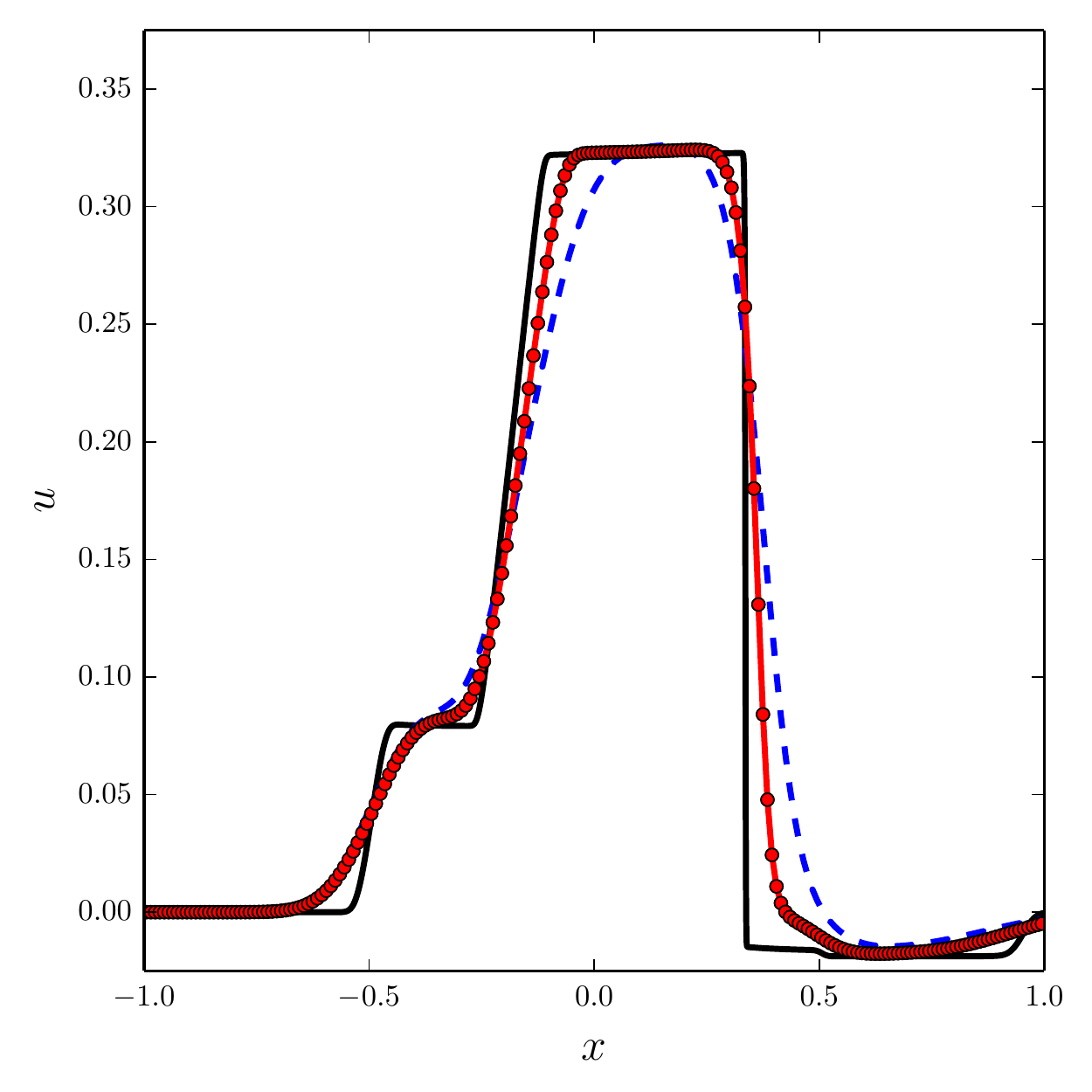}
}
{
\includegraphics[scale=0.415]{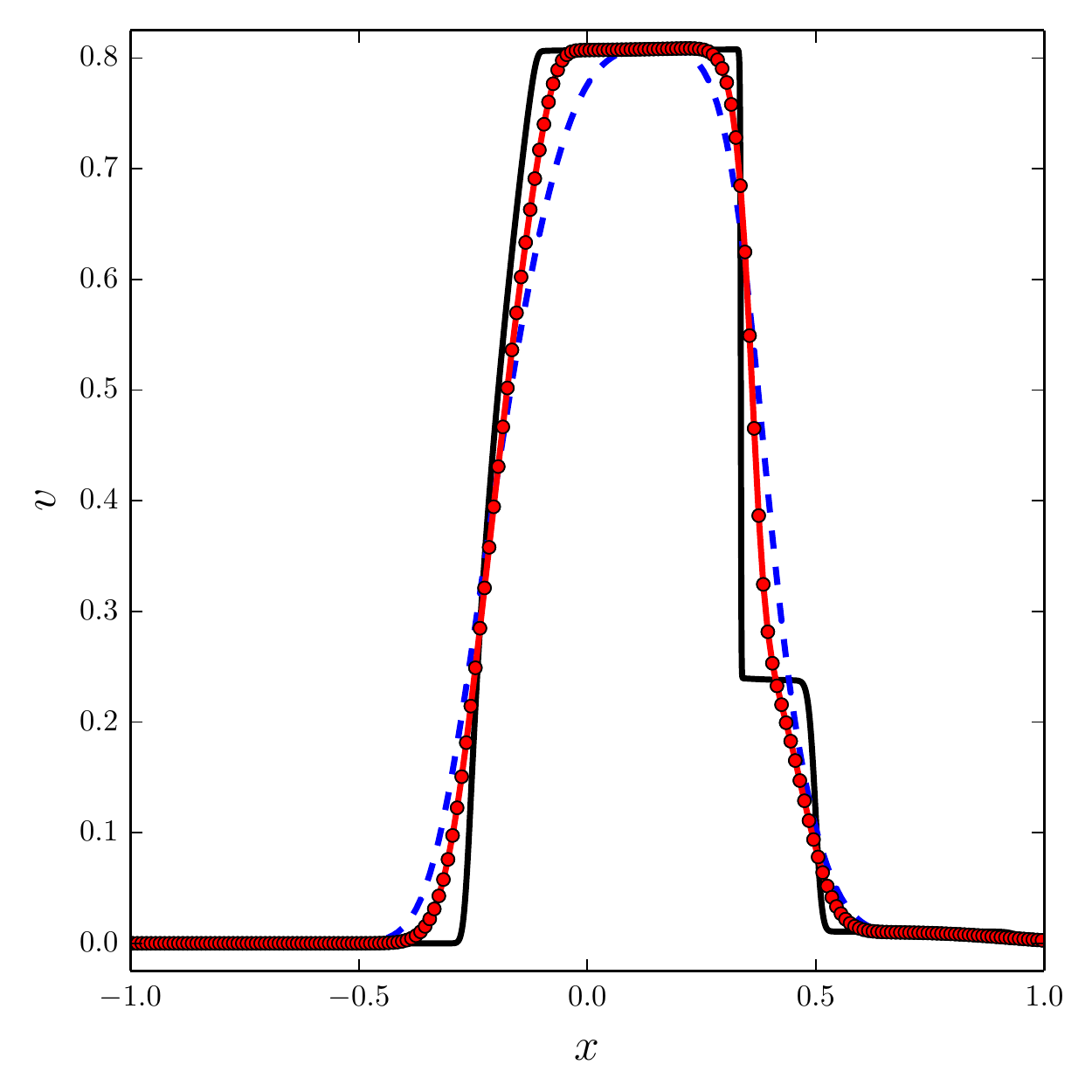}
}
{
\includegraphics[scale=0.415]{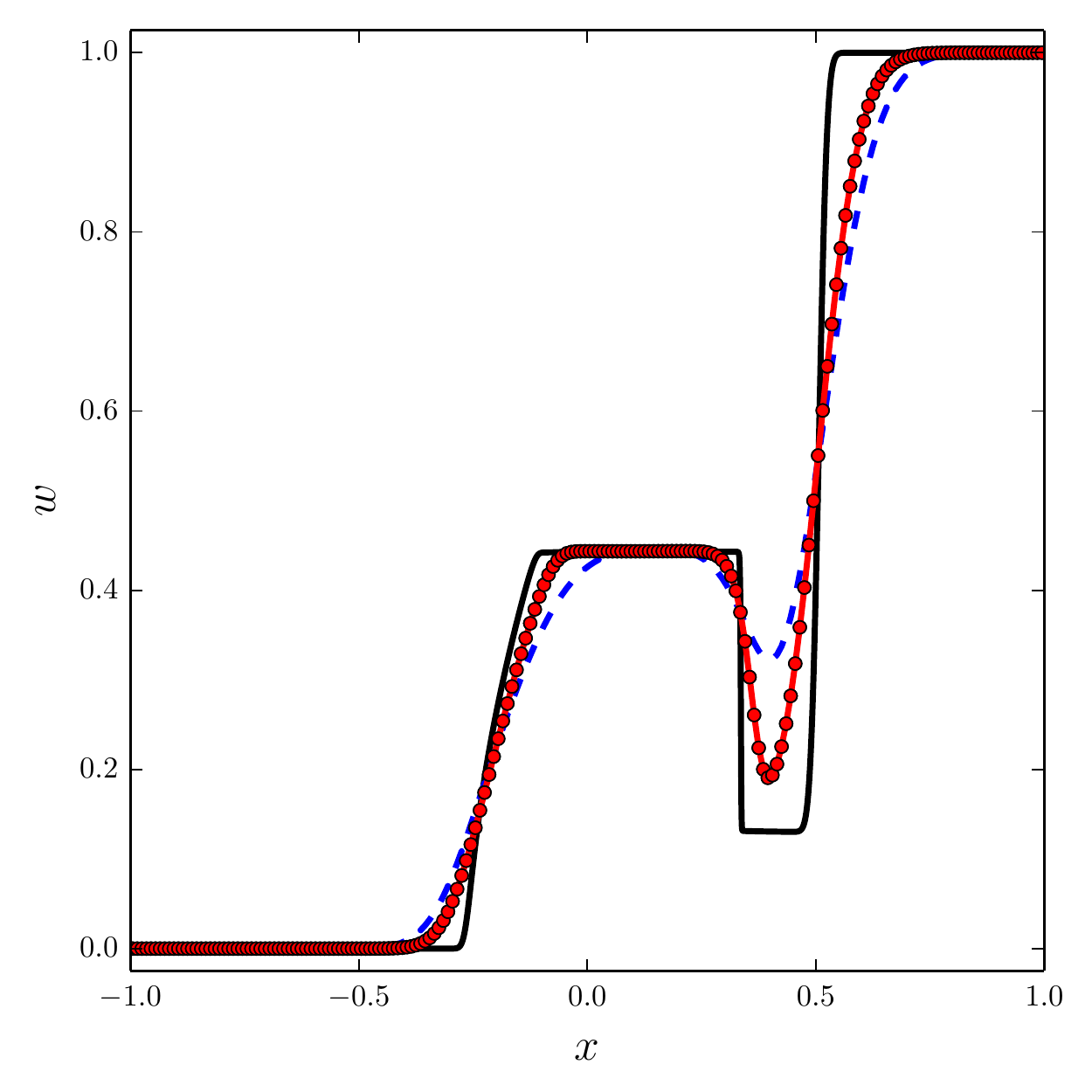}
}
\\
{
\includegraphics[scale=0.575]{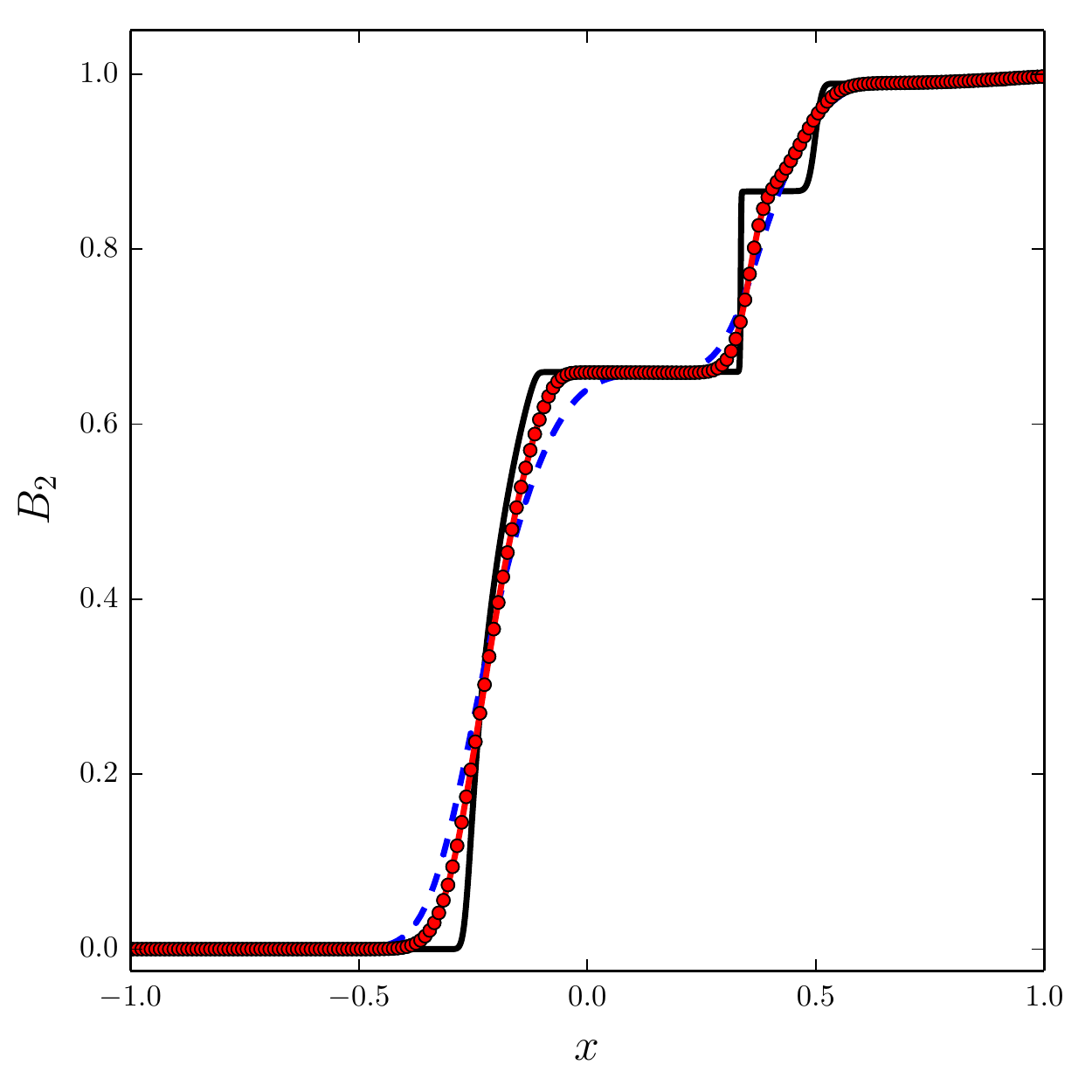}
}
{
\includegraphics[scale=0.575]{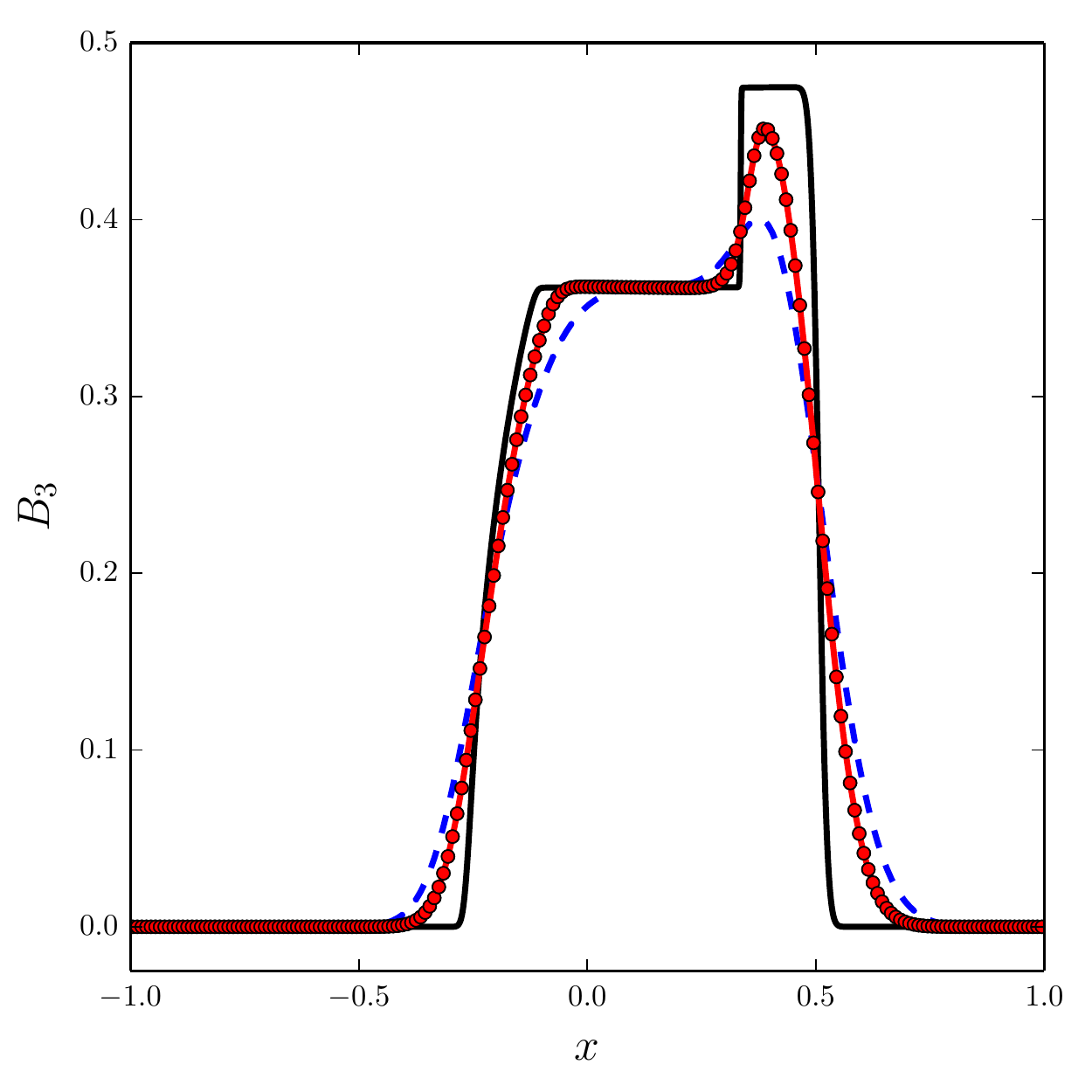}
}
\caption{The entropy stable approximations of the the density $\rho$, pressure $p$, $x$, $y$, and $z$ velocity components, and the $y$ and $z$ components of the magnetic field of the Ryu and Jones problem at $T = 0.4$. Solid black is the reference solution, dashed blue is the ES-LLF scheme, and red with knots is the ES-Roe scheme.}
\label{fig:ES_Ryu}
\end{center}
\end{figure}

\subsubsection{Torrilhon Riemann Problem}\label{TorrES}

Finally, Fig. \ref{fig:ES_Tor} presents the computed ES-Roe and ES-LLF solution for the Torrilhon Riemann problem. Again, both entropy stable schemes capture the complex behavior of the MHD flow. Interestingly, we note that for this test problem that the behavior of both entropy stable schemes is nearly identical. Our computations match well with those found in Torrilhon \cite{torrilhon2003}.
\begin{figure}[!ht]
\begin{center}
{
\includegraphics[scale=0.575]{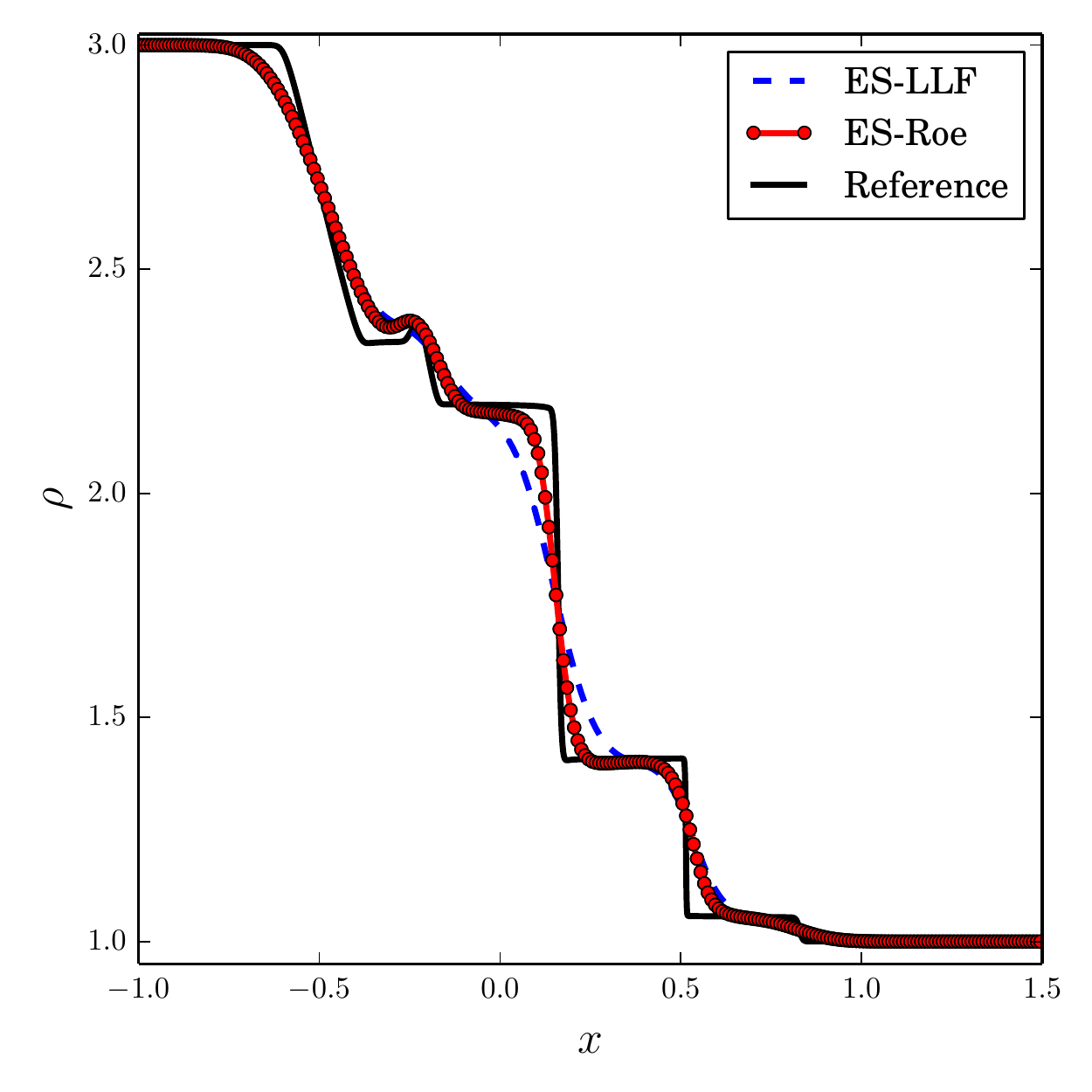}
}
{
\includegraphics[scale=0.575]{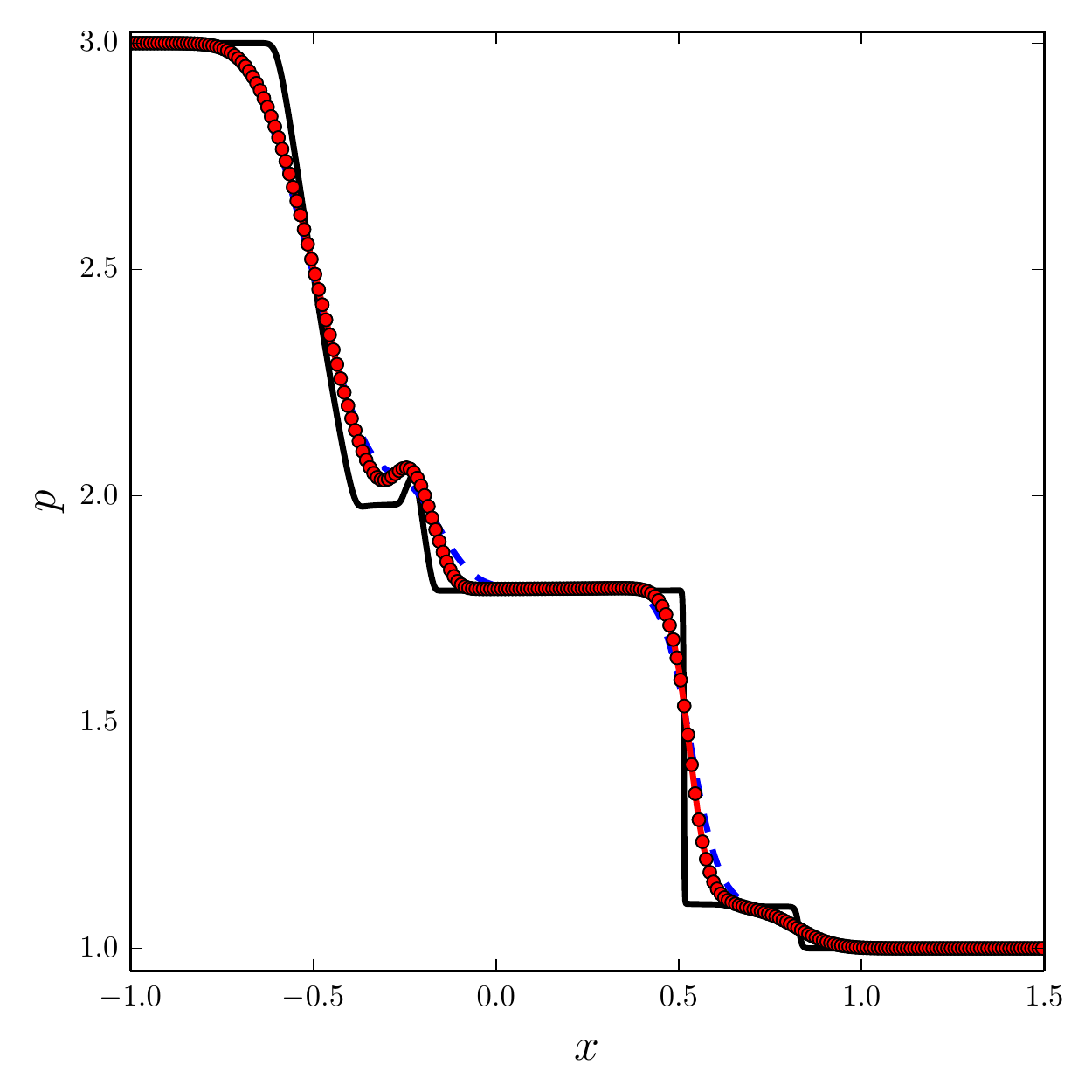}
}
\\
{
\includegraphics[scale=0.415]{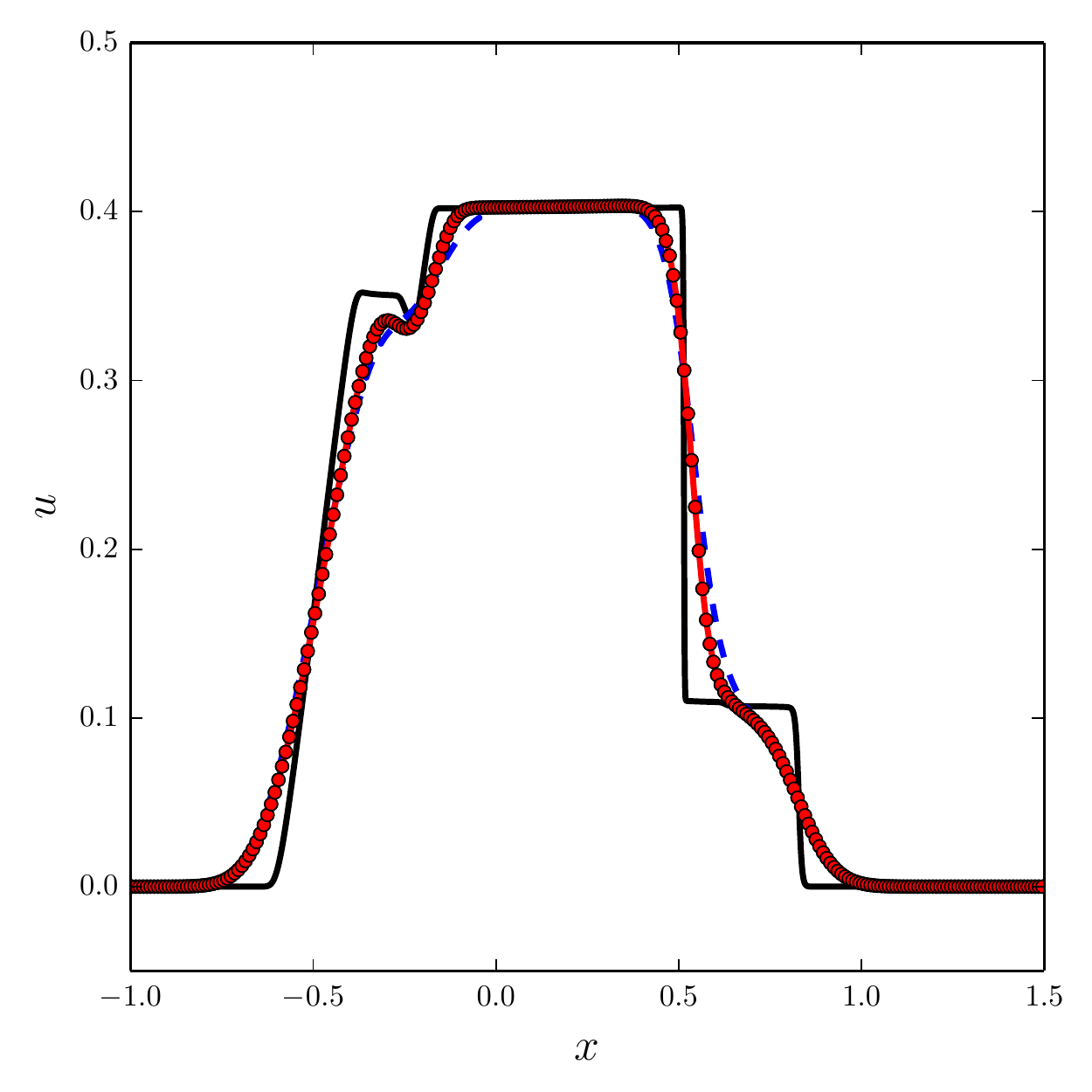}
}
{
\includegraphics[scale=0.415]{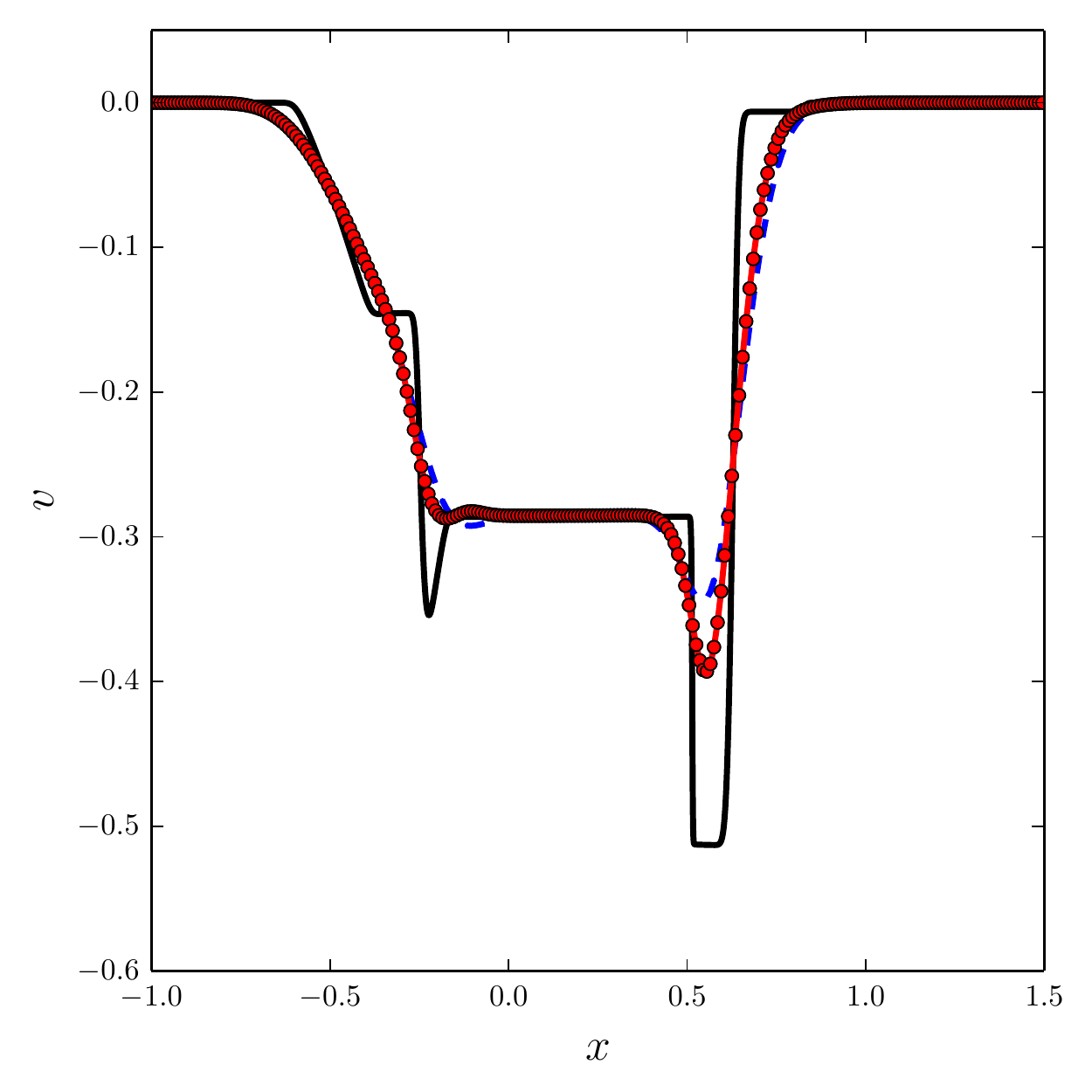}
}
{
\includegraphics[scale=0.415]{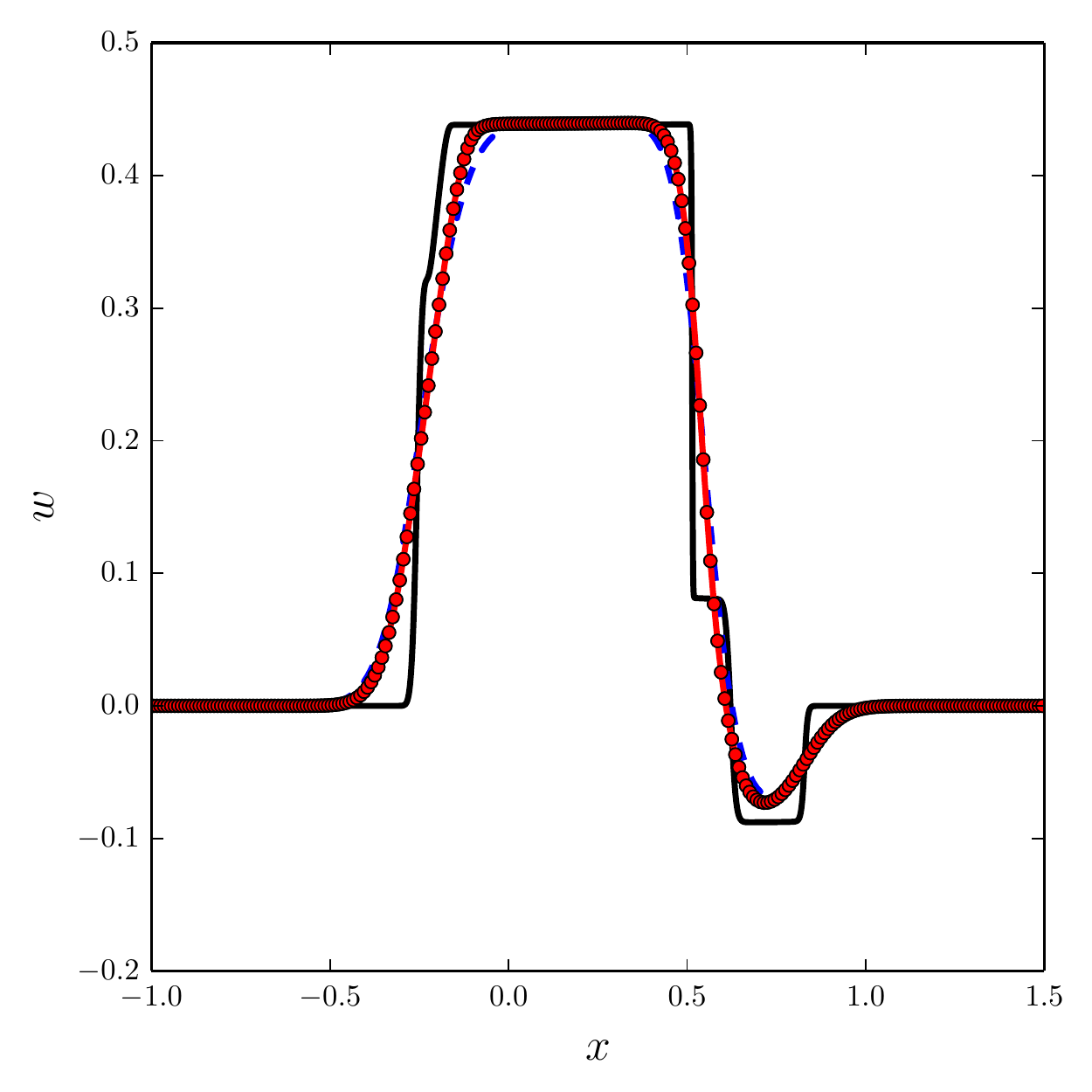}
}
\\
{
\includegraphics[scale=0.575]{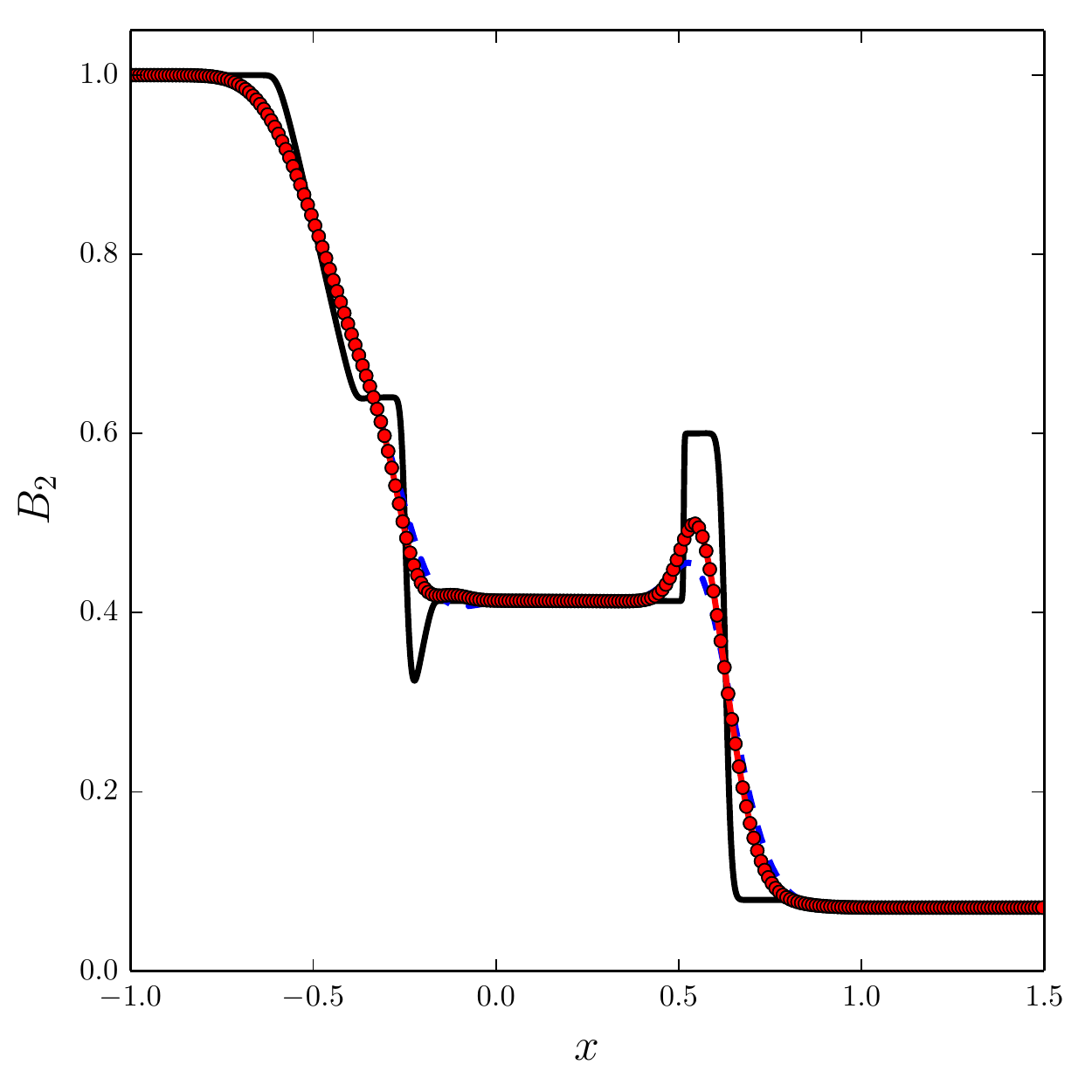}
}
{
\includegraphics[scale=0.575]{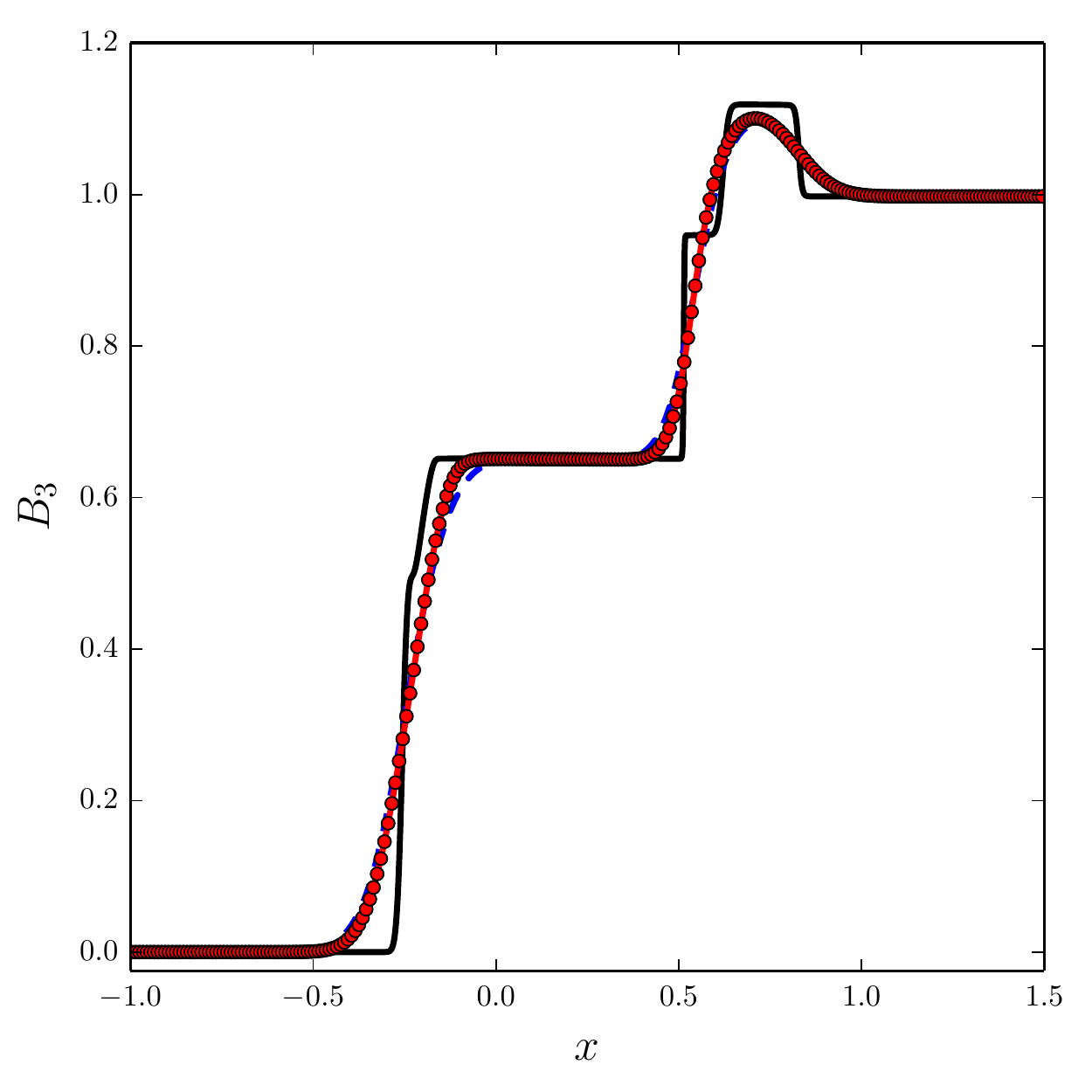}
}
\caption{The entropy stable approximations of the the density $\rho$, pressure $p$, $x$, $y$, and $z$ velocity components, and the $y$ and $z$ components of the magnetic field of the Torrilhon Riemann problem at $T = 0.4$. Solid black is the reference solution, dashed blue is the ES-LLF scheme, and red with knots is the ES-Roe scheme.}
\label{fig:ES_Tor}
\end{center}
\end{figure}

{\color{black}{
\subsection{Two Dimensional Results}\label{2DNumResults}

Next, we include two numerical examples to verify the entropy conservation of the approximation in higher dimensions as well as demonstrate the hyperbolic divergence cleaning capability of the Janhunen source term. For the two dimensional approximations we implement a first order finite volume method in space and a second order Runge-Kutta time integrator.

\subsubsection{2.5D Shock Tube: Entropy Conservation}\label{2DEntCons}

This rotated shock tube problem is denoted a 2.5 dimensional test because all three components of the velocity and magnetic fields are non-zero \cite{dai1994,ryu1995,toth2000}. The solution propagates at an angle of $45^\circ$ relative to the $x-$axis. We set periodic boundary conditions on the domain $\Omega = [0,1]\times[0,1]$ with $\gamma = \frac{5}{3}$. Just as was done in Sec. \ref{ECRiemann} this numerical example is used to verify the entropy conservative properties of the newly developed entropy conserving flux functions in two spatial dimensions. The flux in the $x-$direction is given by \eqref{Eq:entropyconservative} and the flux in the $y-$direction can be found in the appendix \eqref{Eq:entropyconservative-y}. The initial Riemann problem is given by
\begin{equation}\label{25DProblem}
\begin{aligned}
left:&\quad \begin{bmatrix}\rho, \rho u, \rho v, \rho w, p, B_1, B_2, B_3 \end{bmatrix}^T = \left[1.08,1.2,0.01,0.5,0.95,\frac{2}{\sqrt{4\pi}},\frac{2}{\sqrt{4\pi}},\frac{3.6}{\sqrt{4\pi}}\right]^T, \\ 
right:&\quad \begin{bmatrix}\rho, \rho u, \rho v, \rho w, p, B_1, B_2, B_3 \end{bmatrix}^T =\left[1,0,0,0,1,\frac{2}{\sqrt{4\pi}},\frac{4}{\sqrt{4\pi}},\frac{2}{\sqrt{4\pi}}\right]^T.
\end{aligned}
\end{equation}
where the discontinuity is along the line $x + y = 1/2$.

The approximation is calculated on a $50\times 50$ uniform spatial grid. In the Tab. \ref{tab:conservation_2D} we present the conservative properties of the scheme. We see that the mass, momentum, and total energy remain conserved quantities. The introduction of the Janhunen source term causes a loss in conservation of the magnetic field components, as expected. Finally, similar to the results in Sec. \ref{ECRiemann}, the change in the entropy demonstrates the temporal accuracy of the approximation. If we shrink the time step by a factor ten we see that the difference in the entropy shrinks by a factor of one hundred, as we expect for a second order time integrator. 
\begin{table}[!ht]
\footnotesize
\begin{center}
\begin{tabular}{lccccccccc}
\toprule
CFL & $\rho$ & $\rho u$ & $\rho v$ & $\rho w$ & $\rho e$ & $B_1$ & $B_2$ & $B_3$ & $U$ \\[0.075cm]\toprule
$1.0$ & $2.00\text{E \!-}16$ & $2.08\text{E \!-}16$ & $4.51\text{E \!-}16$ &  $2.44\text{E \!-}15$ & $8.88\text{E \!-}16$ & $3.66\text{E \!-}03$ & $3.13\text{E \!-}03$ & $4.22\text{E \!-}03$ & $6.45\text{E \!-}03$ \\\midrule
$0.1$ & $2.66\text{E \!-}16$ & $2.77\text{E \!-}16$ & $2.77\text{E \!-}16$ & $-2.22\text{E \!-}15$ & $1.11\text{E \!-}15$ & $3.66\text{E \!-}03$ & $3.13\text{E \!-}03$ & $4.22\text{E \!-}03$ & $6.71\text{E \!-}05$ \\\midrule
$0.01$ & $2.66\text{E \!-}16$ & $2.22\text{E \!-}16$ & $2.22\text{E \!-}16$ & $-2.22\text{E \!-}15$ & $1.11\text{E \!-}15$ & $3.66\text{E \!-}03$ & $3.13\text{E \!-}03$ & $4.22\text{E \!-}03$ & $6.57\text{E \!-}07$ \\\bottomrule
\end{tabular}
\end{center}
\caption{Conservation errors (integrated over the whole domain) of the entropy conserving approximation applied to the $2.5D$ shock tube problem \eqref{25DProblem} for different CFL numbers, final time $T=0.2$, and $50$ regular grid cells in each direction.}
\label{tab:conservation_2D}
\end{table}%
\subsubsection{MHD Rotor}\label{ROTOR}

The ideal MHD rotor test was first proposed by Balsara and Spicer \cite{balsara1999}, in T\'{o}th \cite{toth2000} this configuration is referred to as the {\it{first rotor problem}}. The computational domain is $\Omega = [0, 1] \times [0, 1]$ with periodic boundary conditions on each side. We take $\gamma = 1.4$ and the initial condition is given as follows. For $\sqrt{{\color{black}{(x-0.5)^2 + (y-0.5)^2}}} = r < r_0$,
\begin{equation}
\rho = 10,\quad (u,v) = \frac{u_0}{r_0}\left(-\left[y-\frac{1}{2}\right],\left[x - \frac{1}{2}\right]\right),
\end{equation}
for $r_0<r<r_1$
\begin{equation}
\rho = 1 + 9f,\quad (u,v) = \frac{fu_0}{r}\left(-\left[y-\frac{1}{2}\right],\left[x - \frac{1}{2}\right]\right),\quad f = \frac{r_1 - r}{r_1 - r_0},
\end{equation}
and for $r > r_1$
\begin{equation}
\rho = 1,\quad (u,v) = \left(0,0\right),
\end{equation}
with $r_0 = 0.1$, $r_1 = 0.115$, and $u_0 = 2$. The rest of the primitive quantities are constants given by
\begin{equation}
w = 0,\quad p = 1,\quad \vec{B}=\frac{5}{\sqrt{4\pi}}\left(1,0,0\right).
\end{equation}
The final time is ${\color{black}{T=0.15}}$ for the first rotor problem. 

The computed density, pressure, {\color{black}{M}}ach number
\begin{equation}
Ma\coloneqq \frac{\|\vec{u}\|}{a},\quad a = \sqrt{\frac{\gamma p}{\rho}},
\end{equation}
and magnetic pressure using the ES-Roe scheme are presented in Fig. \ref{fig:2DRotor}. The computation was performed on a uniform $512 \times 512$ grid with $CFL= 0.8$. Our computed results compare well to those found in the literature, e.g. \cite{balsara1999,toth2000}. The computation captures the circularly rotating velocity field in the central portion of the Mach number. 
\begin{figure}[!ht]
\begin{center}
{
\includegraphics[scale=0.2,trim=25 10 80 25, clip]{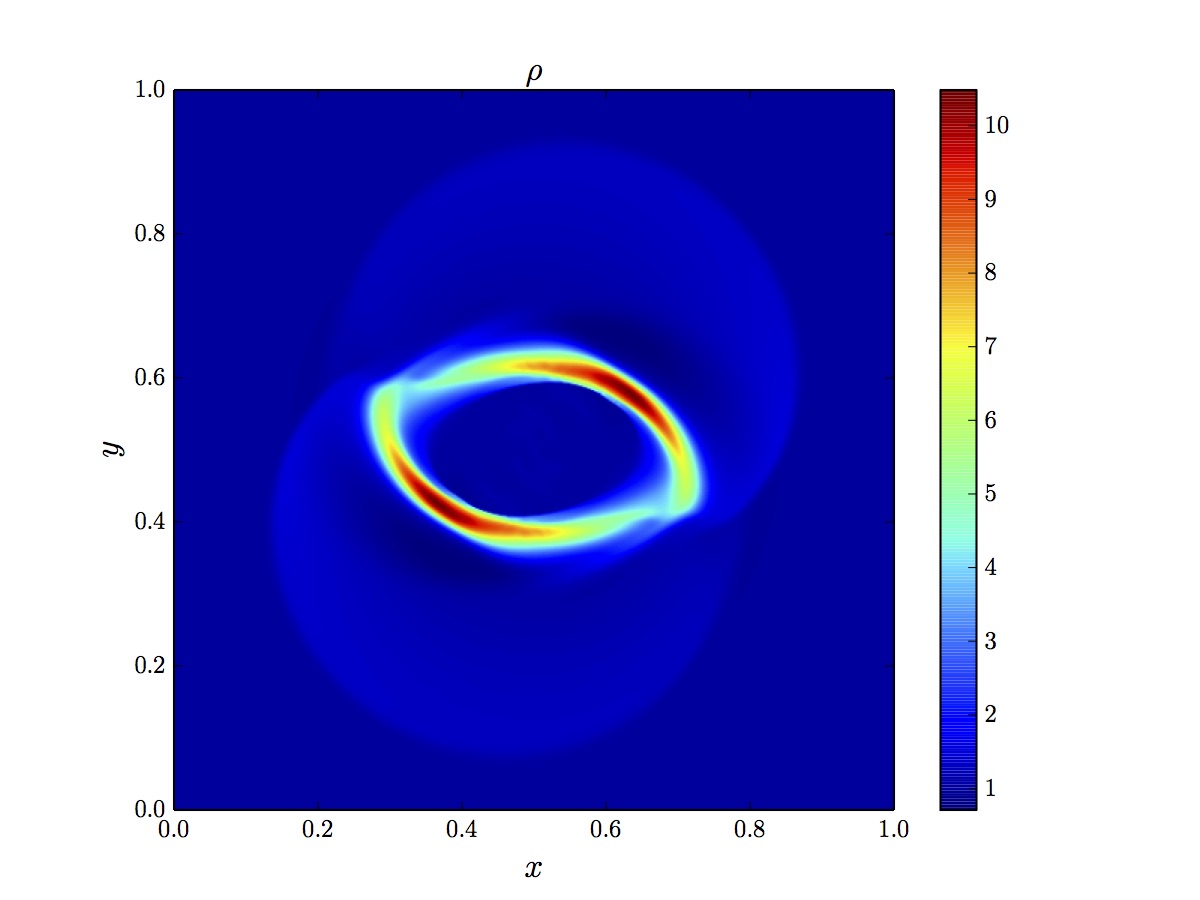}
}
{
\includegraphics[scale=0.2,trim=25 10 80 25, clip]{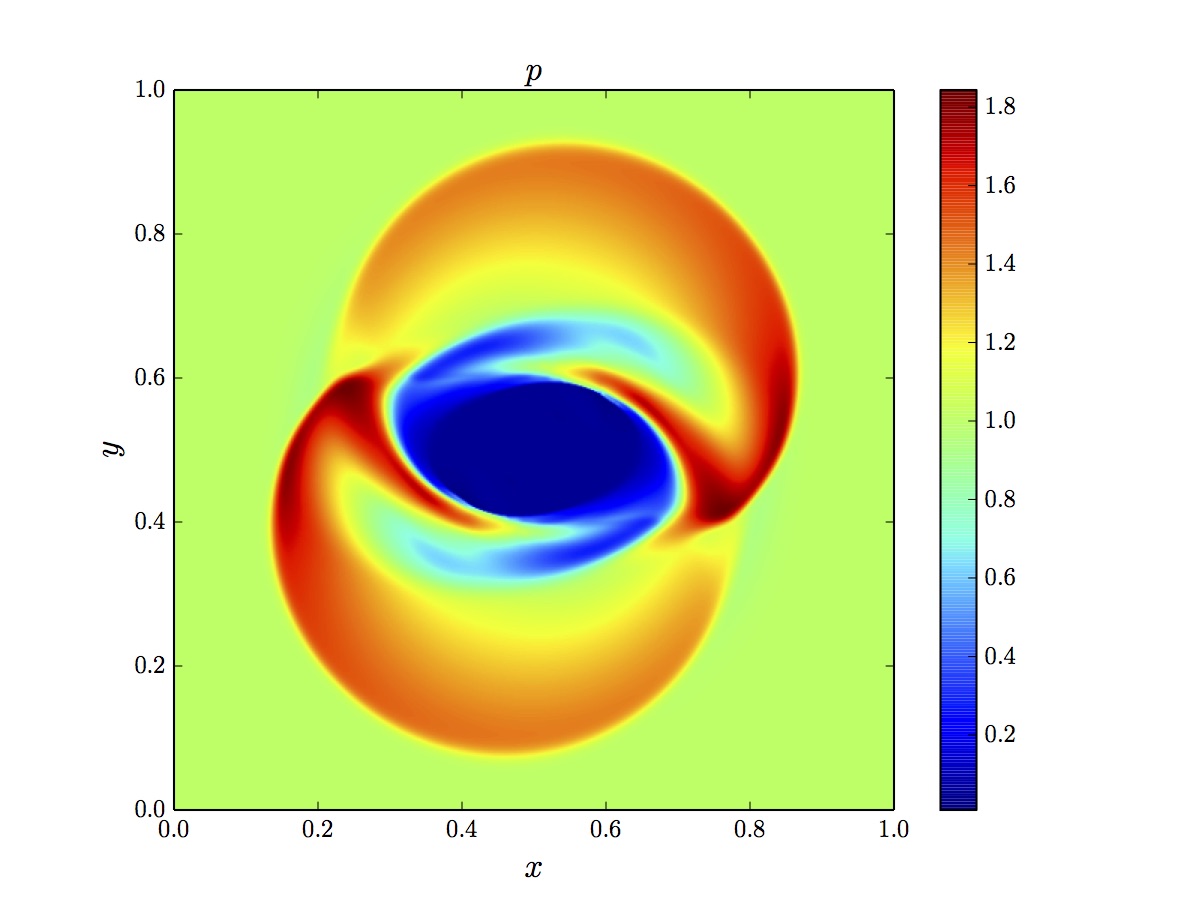}
}
\\
{
\includegraphics[scale=0.2,trim=25 10 80 20, clip]{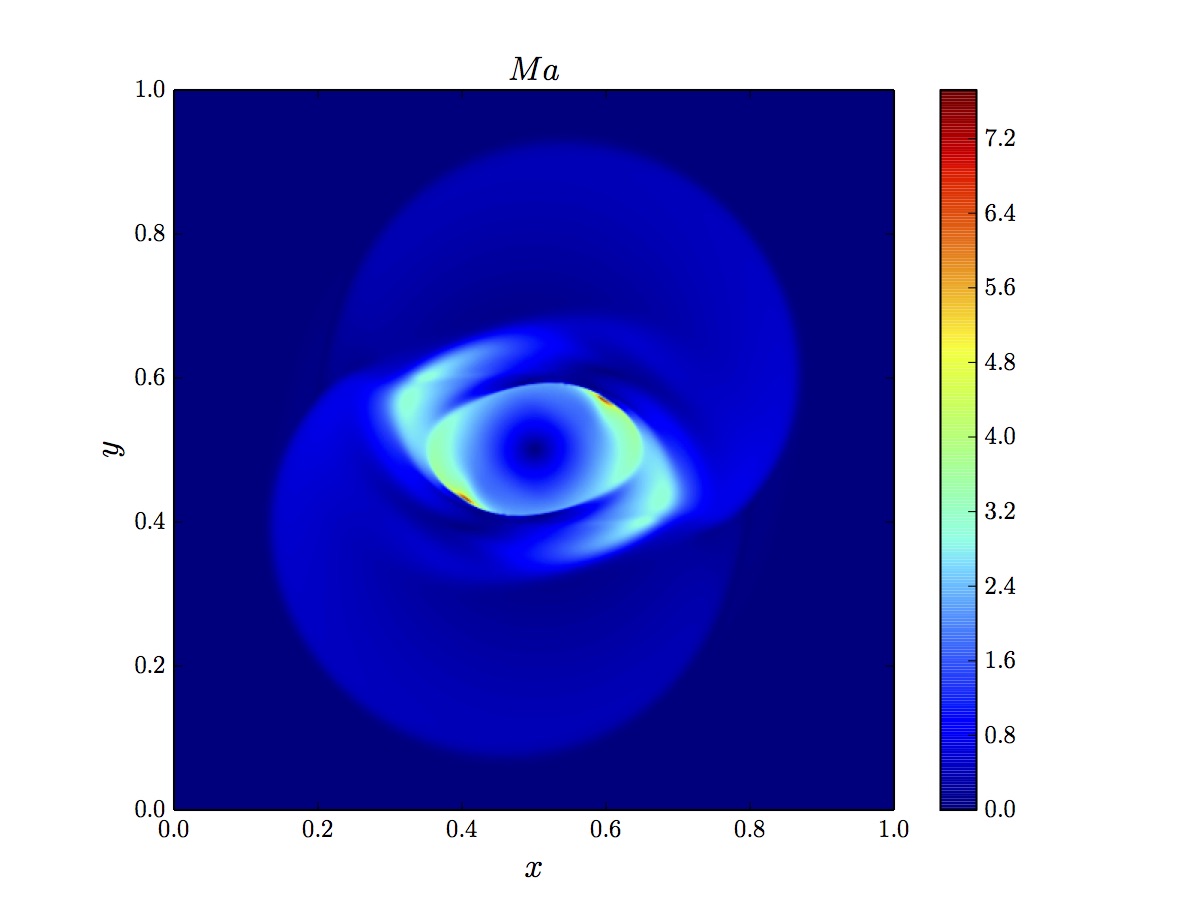}
}
{
\includegraphics[scale=0.2,trim=25 10 80 20, clip]{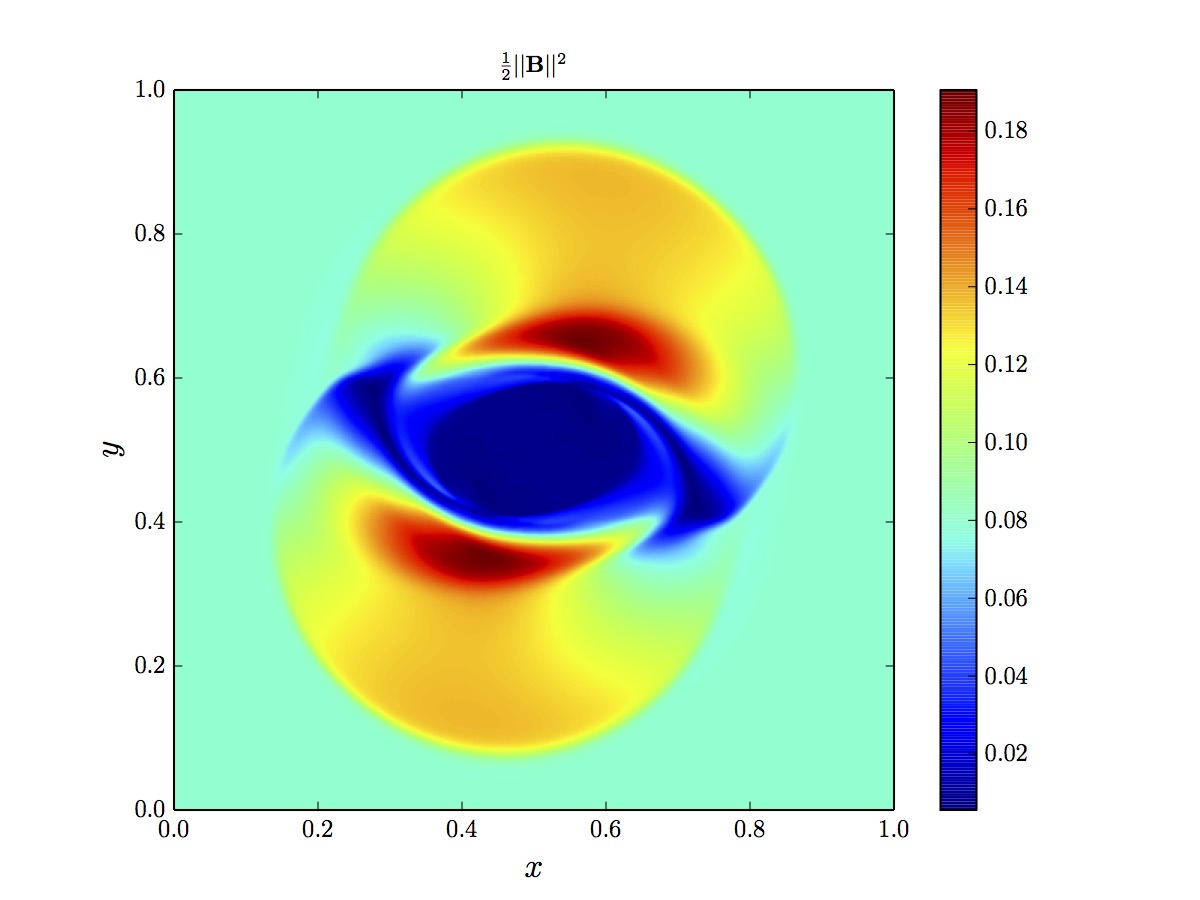}
}
\caption{The computed solution to the first rotor problem on a uniform $512\times 512$ grid using the ES-Roe scheme. We present at ${\color{black}{T=0.15}}$ the computed density, pressure, {\color{black}{M}}ach number, and magnetic pressure.}
\label{fig:2DRotor}
\end{center}
\end{figure}

We noted in Sec. \ref{GoverningEquations} that the Janhunen source term acts in an analogous fashion to a hyperbolic divergence cleaning method. We compute the discrete divergence of the computed magnetic field variables using a first order approximation of the derivative consistent with the discretization used to approximate the Janhunen source term, i.e.,
\begin{equation}\label{discretedivB}
\nabla\cdot\vec{B} \approx \frac{(B_1)_{i+1,j} - (B_1)_{i,j}}{\Delta x} +  \frac{(B_2)_{i,j+1} - (B_2)_{i,j}}{\Delta y}.
\end{equation}
We present in Fig. \ref{fig:divB} the discrete divergence. We see from the plot that we discretely recover the divergence-free condition with three digits of accuracy. It is also interesting to compare the divergence plot Fig. \ref{fig:divB} and the Mach number plot from Fig. \ref{fig:2DRotor}. We observe that the largest divergence errors are concentrated near regions of motion, as expected for a hyperbolic divergence cleaning type method.
\begin{figure}[!ht]
\begin{center}
{
\includegraphics[scale=0.5,trim=25 10 40 25, clip]{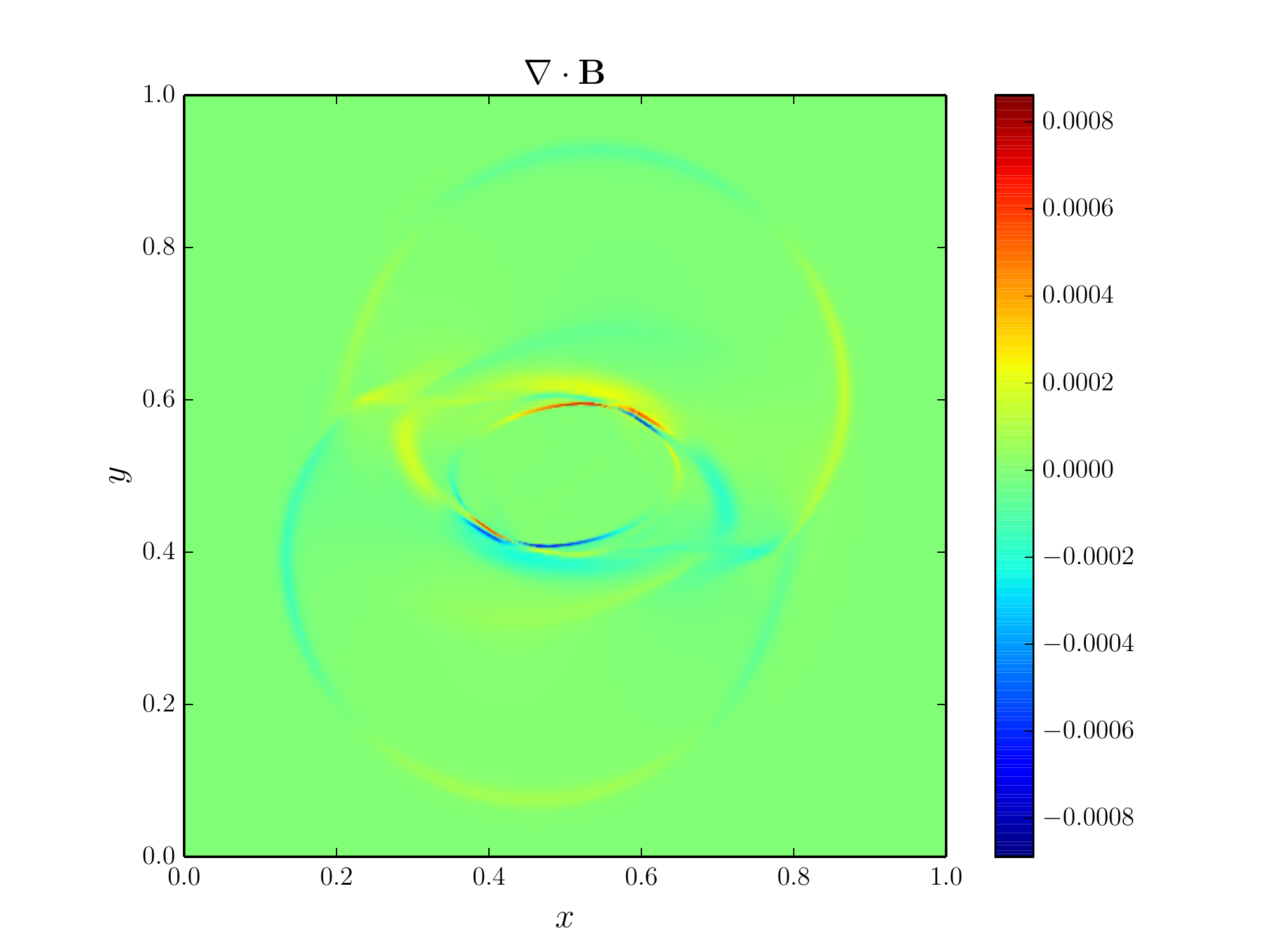}
}
\caption{The discrete divergence of $\vec{B}$ computed from \eqref{discretedivB} at T = 0.15.}
\label{fig:divB}
\end{center}
\end{figure}
}}

\section{Concluding Remarks}\label{conclusion}

In this work we present a novel, affordable, and entropy stable flux for the one-dimensional ideal MHD equations. Upon relaxing the divergence-free condition such that  $\partial_x(B_1)\approx 0$ we showed that it was possible to derive a discrete entropy conserving numerical flux, which we denote by $\vec{f}^{*,ec}$. This assumption was also important to keep the derivations and proofs contained in this paper generalizable to multi-dimensional MHD approximations. We show in the appendix of this work that it is possible, in three dimensions, to create an entropy conserving and stable flux in each Cartesian direction. The derivation of the entropy conserving flux revealed that special care had to be taken for the discretization of the {\color{black}{Janhunen}} source term in order to guarantee discrete entropy conservation. Because entropy conserving approximations can suffer breakdown at shocks we extended our analysis and derived two stabilizing dissipation terms that we add to the entropy conserving flux. We used a variety of numerical test examples from the literature to demonstrate and underscore the theoretical findings. {\color{black}{Lastly, we demonstrated that the Janhunen source acts analogously to a hyperbolic divergence cleaning technique for multidimensional computations.}}

\appendix
\section{Flux Functions in Higher Dimensions}\label{3DFluxes}

The derivation of the entropy conserving and stable numerical fluxes in this paper focused on the one-dimensional ideal MHD equations. The restriction to one spatial dimension was because the analysis proved to be quite intense. However, the discussion was kept general, so the derivations in this paper readily extend to provably entropy conserving and entropy stable approximations of ideal MHD problems on multi-dimensional Cartesian grids. {\color{black}{We note that for two and three dimensional approximations one adds the appropriate component of the Janhunen source term to each of the magnetic field equations.}}

\subsection{Entropy Conservative Fluxes in 3D}

We first note that the entropy potential in the $y-$direction is
\begin{equation}\label{EntropyPotentialy}
\phi_y = \vec{v}\cdot\vec{g} - G = \rho v + \frac{\rho v\|\vec{B}\|^2}{2p} - \frac{\rho B_2(\vec{u}\cdot\vec{B})}{p},
\end{equation}
and in the $z-$direction
\begin{equation}\label{EntropyPotentialz}
\phi_z = \vec{v}\cdot\vec{h} - H = \rho w + \frac{\rho w\|\vec{B}\|^2}{2p} - \frac{\rho B_3(\vec{u}\cdot\vec{B})}{p},
\end{equation}
where we have the entropy fluxes
\begin{equation}
G = -\frac{\rho v s}{\gamma-1},\quad H = -\frac{\rho w s}{\gamma-1}.
\end{equation}
Thus, the discrete entropy conservation condition \eqref{entropyConservationCondition1} will have the same structure in each Cartesian direction. Lastly, the Janhunen source term contributes symmetrically to each direction. With a proof analogous of that for $\vec{f}^{*,ec}$ in Sec. \ref{Sec:EntropyConservingFlux} we present the entropy conserving numerical flux for the $y$ and $z-$directions
\begin{cor}(Entropy Conserving Numerical Flux; $y-$direction)
If we introduce the parameter vector 
\begin{equation}
\vec{z} = \left[\sqrt{\frac{\rho}{p}},\sqrt{\frac{\rho}{p}}u,\sqrt{\frac{\rho}{p}}v,\sqrt{\frac{\rho}{p}}w,\sqrt{\rho p},B_1, B_2, B_3\right]^T,
\end{equation}
the averaged quantities for the primitive variables and products
\begin{equation}
\begin{aligned}
&\hat{\rho} = \average{z_1}z_5^{\ln},\;\;\hat{u}_1=\frac{\average{z_2}}{\average{z_1}},\;\; \hat{v}_1=\frac{\average{z_3}}{\average{z_1}},\;\;\hat{w}_1 = \frac{\average{z_4}}{\average{z_1}},\;\; \hat{p}_1 = \frac{\average{z_5}}{\average{z_1}},\;\;\hat{p}_2 = \frac{\gamma+1}{2\gamma}\frac{z_5^{\ln}}{z_1^{\ln}} + \frac{\gamma-1}{2\gamma}\frac{\average{z_5}}{\average{z_1}}, \\[0.1cm]
&\qquad\hat{u}_2 = \frac{\average{z_1 z_2}}{\average{z_1^2}},\;\;\hat{v}_2 = \frac{\average{z_1 z_3}}{\average{z_1^2}},\;\;\hat{w}_2 = \frac{\average{z_1 z_4}}{\average{z_1^2}},\;\;\hat{B_1} = \average{z_6},\;\;\hat{B_2} = \average{z_7},\;\;\hat{B_3} = \average{z_8}, \\[0.1cm]
&\qquad\;\;\;\accentset{\circ}{B_1} = \average{z_6^2},\;\;\accentset{\circ}{B_2} = \average{z_7^2},\;\;\accentset{\circ}{B_3} = \average{z_8^2},\;\;\widehat{B_1 B_2} = \average{z_6 z_7},\;\;\widehat{B_2B_3} = \average{z_7 z_8},
\end{aligned}
\end{equation}
and discretize the source term in the finite volume method to contribute to each element as
\begin{equation}\label{SourceTermDiscy}
\vec{s}_{ijk} ={\color{black}{\frac{1}{2}}}\left( \vec{s}_{i,j+\tfrac{1}{2},k} + \vec{s}_{i,j-\tfrac{1}{2},k}\right) = -\frac{1}{2}\left(
\jump{B_2}_{j+\tfrac{1}{2}}\begin{bmatrix} 
0\\
0\\
0\\
0\\
0\\
\frac{\average{z_1 z_2}\hat{B_1}}{\average{\Delta y z_1^2 B_1}}\\[0.15cm]
\frac{\average{z_1 z_3}\hat{B_2}}{\average{\Delta y z_1^2 B_2}}\\[0.15cm]
\frac{\average{z_1 z_4}\hat{B_3}}{\average{\Delta y z_1^2 B_3}}
\end{bmatrix}_{j+\tfrac{1}{2}}
+
\jump{B_2}_{j-\tfrac{1}{2}}\begin{bmatrix} 
0\\
0\\
0\\
0\\
0\\
\frac{\average{z_1 z_2}\hat{B_1}}{\average{\Delta y z_1^2 B_1}}\\[0.15cm]
\frac{\average{z_1 z_3}\hat{B_2}}{\average{\Delta y z_1^2 B_2}}\\[0.15cm]
\frac{\average{z_1 z_4}\hat{B_3}}{\average{\Delta y z_1^2 B_3}}
\end{bmatrix}_{j-\tfrac{1}{2}}
\right),
\end{equation}
{\color{black}{where we have included additional indices on the source term for clarity,}}
then we can determine a discrete, entropy conservative flux to be
\begin{equation}\label{Eq:entropyconservative-y}
\vec{g}^{*,ec} = \begin{bmatrix}
\hat{\rho}\hat{v}_1 \\
\hat{\rho}\hat{u}_1\hat{v}_1 - \widehat{B_1B_2} \\
\hat{p}_1 + \hat{\rho}\hat{v}^2_1  + \frac{1}{2}\left(\accentset{\circ}{B_1}+\accentset{\circ}{B_2}+\accentset{\circ}{B_3}\right) - \accentset{\circ}{B_2} \\ 
\hat{\rho}\hat{v}_1\hat{w}_1 -\widehat{B_2 B_3} \\
\frac{\gamma \hat{v}_1\hat{p}_2}{\gamma -1} + \frac{\hat{\rho}\hat{v}_1}{2}(\hat{u}_1^2 + \hat{v}_1^2 + \hat{w}_1^2) + \hat{v}_2\hat{B}_1^2 - \hat{u}_2\hat{B_1}\hat{B_2} + \hat{v}_2\hat{B}_3^2-\hat{w}_2\hat{B_2}\hat{B_3} \\
\hat{v}_2\hat{B_1} - \hat{u}_2\hat{B_2} \\
0 \\
\hat{v}_2\hat{B_3} - \hat{w}_2\hat{B_2}
\end{bmatrix}.
\end{equation}
\end{cor}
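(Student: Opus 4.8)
The plan is to mirror the proof of the $x$-direction theorem, exploiting the fact recorded in \eqref{EntropyPotentialy} that the $y$-direction entropy potential $\phi_y=\vec{v}\cdot\vec{g}-G$ has exactly the same algebraic shape as $\phi$, with the roles of the advecting velocity $u$ and the normal field $B_1$ replaced by $v$ and $B_2$. First I would write down the $y$-direction analog of the discrete entropy conserving condition \eqref{entropyConservationCondition1},
\[
\jump{\,\vec{v}\,}^T\vec{g}^* = \jump{\,\phi_y\,} - \average{\Delta y\,\vec{v}}^T\vec{s}_{j+\tfrac{1}{2}},
\]
and substitute the explicit form of $\phi_y$, so that the right-hand side reads $\jump{\rho v}+\jump{\rho v\|\vec{B}\|^2/(2p)}-\jump{\rho B_2(\vec{u}\cdot\vec{B})/p}$ minus the source contribution. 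This single scalar constraint, together with the consistency requirement \eqref{consistency}, is what pins down the eight components of $\vec{g}^*$.

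Next I would re-use the same parameter vector $\vec{z}$ \eqref{zinProof} and the identities \eqref{zIdentities}, now expanding the products built from the advecting component $v=z_3/z_1$ (so that $\rho v=z_3 z_5$) and the normal field $B_2=z_7$. Applying the jump product rules $\jump{ab}=\average{a}\jump{b}+\average{b}\jump{a}$ and $\jump{a^2}=2\average{a}\jump{a}$ to both sides, I would decompose every term into the eight linear jumps $\jump{z_1},\ldots,\jump{z_8}$ and collect like coefficients, producing a system of eight linear equations for $g_1^*,\ldots,g_8^*$ in direct analogy with \eqref{Jump1}--\eqref{Jump8}.

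The crucial structural difference, and the step I expect to be the main obstacle, is the bookkeeping of the role reversal. In the $y$-direction it is the \emph{seventh} physical flux component that vanishes ($g_7=0$, since the induction equation gives a zero $y$-flux for $B_2$), so the inconsistency that surfaced in the $\jump{z_6}$ equation \eqref{Jump6} for the $x$-flux now appears in the $\jump{z_7}$ equation. I would isolate the extraneous terms multiplying $\jump{z_7}$, of the type $\average{z_1 z_3}\average{z_7}$, $\average{z_1 z_2}\average{z_6}$, and $\average{z_1 z_4}\average{z_8}$, and match them against the three degrees of freedom $s_6,s_7,s_8$ in the Janhunen source discretization \eqref{SourceTermDiscy}, which is proportional to $\jump{B_2}=\jump{z_7}$; choosing them to cancel the spurious terms forces $g_7^*=0$ consistently. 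Because the magnetic-field coupling in the flux is not symmetric under $x\leftrightarrow y$ (the mixed products $\widehat{B_1B_2}$ and $\widehat{B_2B_3}$ enter differently than $\widehat{B_1B_2}$ and $\widehat{B_1B_3}$ did), this is a genuine re-derivation rather than a mechanical relabeling, and the sign and index placement of these cross terms must be tracked carefully.

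Finally, with the source term fixed I would solve the remaining seven equations for $g_1^*,\ldots,g_6^*,g_8^*$, re-express the result through the primitive-variable averages $\hat{\rho},\hat{u}_1,\hat{v}_1,\hat{v}_2,\widehat{B_1B_2},\widehat{B_2B_3}$, and so on to obtain \eqref{Eq:entropyconservative-y}. I would close the argument by verifying consistency: setting the left and right states equal and checking that each component of $\vec{g}^{*,ec}$ collapses to the physical $y$-flux, exactly as was done for $\vec{f}^{*,ec}$ in \eqref{consistent1}--\eqref{consistent8}. The $z$-direction corollary then follows by the identical procedure with the roles of $w$ and $B_3$ (and the vanishing eighth flux component) in place of $v$ and $B_2$.
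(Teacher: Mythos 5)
Your proposal is correct and follows exactly the route the paper intends: the paper gives no separate proof for the $y$-direction corollary, stating only that it follows by a proof ``analogous'' to that of the $x$-direction theorem after recording the entropy potential $\phi_y$ in \eqref{EntropyPotentialy}, and your plan is precisely that analogy carried out --- same parameter vector $\vec{z}$, same linear-jump decomposition into eight equations, with the inconsistency correctly relocated to the $\jump{z_7}$ equation (since $g_7=0$) and removed by the $\jump{B_2}$-proportional source discretization \eqref{SourceTermDiscy}, followed by the same consistency check. Your observation that the cross terms $\widehat{B_1B_2}$, $\widehat{B_2B_3}$ must be re-tracked rather than mechanically relabeled is a fair and accurate caveat, but it does not constitute a departure from the paper's argument.
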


\begin{cor}(Entropy Conserving Numerical Flux; $z-$direction)
If we introduce the parameter vector 
\begin{equation}
\vec{z} = \left[\sqrt{\frac{\rho}{p}},\sqrt{\frac{\rho}{p}}u,\sqrt{\frac{\rho}{p}}v,\sqrt{\frac{\rho}{p}}w,\sqrt{\rho p},B_1, B_2, B_3\right]^T,
\end{equation}
the averaged quantities for the primitive variables and products
\begin{equation}
\begin{aligned}
&\hat{\rho} = \average{z_1}z_5^{\ln},\;\;\hat{u}_1=\frac{\average{z_2}}{\average{z_1}},\;\; \hat{v}_1=\frac{\average{z_3}}{\average{z_1}},\;\;\hat{w}_1 = \frac{\average{z_4}}{\average{z_1}},\;\; \hat{p}_1 = \frac{\average{z_5}}{\average{z_1}},\;\;\hat{p}_2 = \frac{\gamma+1}{2\gamma}\frac{z_5^{\ln}}{z_1^{\ln}} + \frac{\gamma-1}{2\gamma}\frac{\average{z_5}}{\average{z_1}}, \\[0.1cm]
&\qquad\hat{u}_2 = \frac{\average{z_1 z_2}}{\average{z_1^2}},\;\;\hat{v}_2 = \frac{\average{z_1 z_3}}{\average{z_1^2}},\;\;\hat{w}_2 = \frac{\average{z_1 z_4}}{\average{z_1^2}},\;\;\hat{B_1} = \average{z_6},\;\;\hat{B_2} = \average{z_7},\;\;\hat{B_3} = \average{z_8}, \\[0.1cm]
&\qquad\;\;\;\accentset{\circ}{B_1} = \average{z_6^2},\;\;\accentset{\circ}{B_2} = \average{z_7^2},\;\;\accentset{\circ}{B_3} = \average{z_8^2},\;\;\widehat{B_1 B_3} = \average{z_6 z_8},\;\;\widehat{B_2B_3} = \average{z_7 z_8},
\end{aligned}
\end{equation}
and discretize the source term in the finite volume method to contribute to each element as
\begin{equation}\label{SourceTermDiscz}
\vec{s}_{ijk} ={\color{black}{\frac{1}{2}}}\left(\vec{s}_{i,j,k+\tfrac{1}{2}} + \vec{s}_{i,j,k-\tfrac{1}{2}}\right) = -\frac{1}{2}\left(
\jump{B_3}_{k+\tfrac{1}{2}}\begin{bmatrix} 
0\\
0\\
0\\
0\\
0\\
\frac{\average{z_1 z_2}\hat{B_1}}{\average{\Delta z z_1^2 B_1}}\\[0.15cm]
\frac{\average{z_1 z_3}\hat{B_2}}{\average{\Delta z z_1^2 B_2}}\\[0.15cm]
\frac{\average{z_1 z_4}\hat{B_3}}{\average{\Delta z z_1^2 B_3}}
\end{bmatrix}_{k+\tfrac{1}{2}}
+
\jump{B_3}_{k-\tfrac{1}{2}}\begin{bmatrix} 
0\\
0\\
0\\
0\\
0\\
\frac{\average{z_1 z_2}\hat{B_1}}{\average{\Delta z z_1^2 B_1}}\\[0.15cm]
\frac{\average{z_1 z_3}\hat{B_2}}{\average{\Delta z z_1^2 B_2}}\\[0.15cm]
\frac{\average{z_1 z_4}\hat{B_3}}{\average{\Delta z z_1^2 B_3}}
\end{bmatrix}_{k-\tfrac{1}{2}}
\right),
\end{equation}
{\color{black}{where we have included additional indices on the source term for clarity,}} then we can determine a discrete, entropy conservative flux to be
\begin{equation}
\vec{h}^{*,ec} = \begin{bmatrix}
\hat{\rho}\hat{w}_1 \\
\hat{\rho}\hat{u}_1\hat{w}_1 - \widehat{B_1 B_3} \\ 
\hat{\rho}\hat{v}_1\hat{w}_1 -\widehat{B_2 B_3} \\
\hat{p}_1 + \hat{\rho}\hat{w}_1^2 + \frac{1}{2}\left(\accentset{\circ}{B_1}+\accentset{\circ}{B_2}+\accentset{\circ}{B_3}\right) - \accentset{\circ}{B_3} \\
\frac{\gamma \hat{w}_1\hat{p}_2}{\gamma -1} + \frac{\hat{\rho}\hat{w}_1}{2}(\hat{u}_1^2 + \hat{v}_1^2 + \hat{w}_1^2) + \hat{w}_2\hat{B}_1^2 - \hat{u}_2\hat{B_1}\hat{B_3} + \hat{w}_2\hat{B}_2^2-\hat{v}_2\hat{B_2}\hat{B_3} \\
\hat{w}_2\hat{B_1} - \hat{u}_2\hat{B_3} \\
\hat{w}_2\hat{B_2} - \hat{v}_2\hat{B_3} \\
0
\end{bmatrix}.
\end{equation}
\end{cor}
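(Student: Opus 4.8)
The plan is to reproduce the argument of Sec.~\ref{Sec:EntropyConservingFlux} with the roles of the Cartesian directions permuted, exploiting that the parameter vector $\vec{z}$ and the entropy variables $\vec{v}$ are identical in every direction. First I would write the $z$-direction analog of the discrete entropy conserving condition \eqref{entropyConservationCondition2}, replacing the flux and entropy flux by their $z$-direction counterparts and using the entropy potential $\phi_z$ from \eqref{EntropyPotentialz}. This produces the single scalar constraint $\jump{\vec{v}}^T\vec{h}^* = \jump{\rho w} + \jump{\frac{\rho w\|\vec{B}\|^2}{2p}} - \jump{\frac{\rho B_3(\vec{u}\cdot\vec{B})}{p}} - \average{\Delta z\,\vec{v}}^T\vec{s}$ on the eight unknown components of $\vec{h}^{*,ec}$.

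Next I would expand this identity componentwise and rewrite every product as a linear combination of the elementary jumps $\jump{z_1},\dots,\jump{z_8}$, using the identities of \eqref{zIdentities} together with $\jump{ab}=\average{a}\jump{b}+\average{b}\jump{a}$ and $\jump{a^2}=2\average{a}\jump{a}$. The essential simplification is that the $z$-direction problem is the $x$-direction problem under the coordinate swap $x\leftrightarrow z$, i.e.\ the relabeling $z_2\leftrightarrow z_4$ and $z_6\leftrightarrow z_8$ (exchanging $u\leftrightarrow w$ and $B_1\leftrightarrow B_3$) while $z_3,z_7$ stay fixed. Since this relabeling leaves the structure of the ideal MHD flux invariant, the required parametrizations such as $\frac{\rho w B_1^2}{2p}=\frac12 z_1 z_4 z_6^2$ and $\frac{\rho u B_1 B_3}{p}=z_1 z_2 z_6 z_8$ follow immediately, and collecting like jumps yields a system of eight equations that is the image of \eqref{Jump1}--\eqref{Jump8} under this relabeling.

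The main obstacle, and the only place demanding real care, is that the inconsistency now migrates to the $\jump{z_8}$ equation rather than the $\jump{z_6}$ equation. Because the $z$-direction physical flux satisfies $h_8=0$ (the $B_3$-equation has vanishing normal flux), the naive solution of the $\jump{z_8}$ equation produces a spurious eighth flux component, structurally identical to \eqref{sixthComponentProblem} after relabeling, whose extra terms are proportional to $\average{z_1z_2}\average{z_6}$, $\average{z_1z_3}\average{z_7}$ and $\average{z_1z_4}\average{z_8}$. I would then fix the source-term degrees of freedom $s_6,s_7,s_8$ exactly as in \eqref{SourceTermDiscinProof}, but now driven by $\jump{B_3}$ rather than $\jump{B_1}$, so that the contribution $-\average{\Delta z\,\vec{v}}^T\vec{s}$ cancels these terms and restores $h_8^*=0$; this is precisely the discretization recorded in \eqref{SourceTermDiscz}.

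Finally, with the inconsistency removed I would solve the remaining seven equations for $h_1^*,\dots,h_7^*$ and collect them into the averaged form of $\vec{h}^{*,ec}$, then verify consistency \eqref{consistency} by setting the left and right states equal and checking that each component reduces to the physical $z$-flux, e.g.\ $h_1^*\to\rho w$, $h_4^*\to p+\rho w^2+\frac12\|\vec{B}\|^2-B_3^2$, and $h_8^*\to 0$. Since every step is a direct relabeling of the $x$-direction computation, I expect no genuinely new difficulty beyond careful bookkeeping of which transverse and normal components appear in the energy-flux cross terms.
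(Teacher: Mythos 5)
Your proposal is correct and follows exactly the route the paper intends: the paper proves the $z$-direction corollary only by appeal to the symmetry of the entropy potential \eqref{EntropyPotentialz} and of the Janhunen source term with the $x$-direction argument of Sec.~\ref{Sec:EntropyConservingFlux}, which is precisely the relabeling $u\leftrightarrow w$, $B_1\leftrightarrow B_3$ (i.e.\ $z_2\leftrightarrow z_4$, $z_6\leftrightarrow z_8$) you carry out, including the correct observation that the inconsistency migrates to the $\jump{z_8}$ equation and is cancelled by the $\jump{B_3}$-driven source discretization \eqref{SourceTermDiscz}.
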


\subsection{Entropy Stable Fluxes in 3D}

Just as was done in Sec. \ref{Sec:StableFlux} we can create 3D entropy stable flux functions. This requires the eigenstructure of the flux Jacobian matrices in the $y$ and $z$ directions augmented by the Powell source term. We denote the altered flux matrix in the $y-$direction by
\begin{equation}
\widehat{B} = \vec{g}_{\vec{q}} + \matrix{P}_y,
\end{equation}
and the new $z-$direction flux Jacobian matrix
\begin{equation}
\widehat{C} = \vec{h}_{\vec{q}} + \matrix{P}_z,
\end{equation}
where $\vec{g}$, $\vec{h}$ are the physical fluxes in $y$, $z$ and $\matrix{P}_y$, $\matrix{P}_z$ are the Powell matrix \eqref{PowellMatrix} with the non-zero column shifted to the seventh or eighth column respectively.  

First, we describe the entropy stable fluxes in the $y-$ direction. To do so we require the eigendecomposition of the matrix $\widehat{\matrix{B}}$. For convenience we denote the matrix of right eigenvectors $\widehat{\matrix{R}}_y$ with columns given by the vectors in the following order
\begin{equation}\label{rightEVy}
\widehat{\matrix{R}}_y = \Big[ \hat{\,\vec{r}}_{-f} \big|\,\hat{\vec{r}}_{-a} \big|\,\hat{\vec{r}}_{-s} \big|\,\hat{\vec{r}}_{E} \big|\,\hat{\vec{r}}_{D} \big|\,\hat{\vec{r}}_{+s} \big|\,\hat{\vec{r}}_{+a} \big|\,\hat{\vec{r}}_{+f} \Big],
\end{equation}
and $\widehat{\matrix{L}}_y = \widehat{\matrix{R}}_y^{-1}$.
\begin{itemize}
\item[] \underline{Entropy and Divergence Waves}: $\lambda_{E,D} = v$
\begin{equation}
\widehat{\vec{r}}_E = \begin{bmatrix} 1 \\ u \\v \\w \\ \frac{\|\vec{u}\|^2}{2} \\0 \\0 \\0 \end{bmatrix},\quad\widehat{\vec{r}}_D = \begin{bmatrix} 0 \\ 0 \\0 \\0 \\\sqrt{\rho}\,b_2 \\0 \\1 \\0 \end{bmatrix},
\end{equation}
\item[] \underline{Alfv\'{e}n Waves}: $\lambda_{\pm a} = v\pm b_2$ 
\begin{equation}
\widehat{\vec{r}}_{\pm a} = \begin{bmatrix}
0 \\
\pm \rho^{\frac{3}{2}}\,b_3 \\
0 \\
\mp \rho^{\frac{3}{2}}\,b_1 \\
\pm \rho^{\frac{3}{2}}(b_3 u - b_1 w) \\
\rho b_3 \\
0 \\
-\rho b_1
\end{bmatrix},
\end{equation}
\item[] \underline{Magnetoacoustic Waves}: $\lambda_{\pm f,\pm s} = v\pm c_{f,s}$ 
\begin{equation}
\widehat{\vec{r}}_{\pm f} = \begin{bmatrix}
\alpha_f\rho \\[0.1cm]
\rho\left(\alpha_f u \mp \alpha_s c_s \beta_1 sgn(b_2) \right) \\[0.1cm]
\alpha_f\rho(v \pm c_{f}) \\[0.1cm]
\rho\left(\alpha_f w \mp \alpha_s c_s \beta_3 sgn(b_2) \right) \\[0.1cm]
\Psi_{\pm f} \\[0.1cm]
\alpha_s a \beta_1 \sqrt{\rho} \\[0.1cm]
0 \\[0.1cm]
\alpha_s a \beta_3 \sqrt{\rho} 
\end{bmatrix},
\qquad 
\widehat{\vec{r}}_{\pm s} = \begin{bmatrix}
\alpha_s\rho \\[0.1cm]
\rho\left(\alpha_s u \pm \alpha_f c_f \beta_1 sgn(b_2)\right) \\[0.1cm]
\alpha_s\rho\left(v \pm c_s\right) \\[0.1cm]
\rho\left(\alpha_s w \pm \alpha_f c_f \beta_3 sgn(b_2)\right) \\[0.1cm]
\Psi_{\pm s} \\[0.1cm]
-\alpha_f a \beta_1 \sqrt{\rho} \\[0.1cm]
0 \\[0.1cm]
-\alpha_f a \beta_3 \sqrt{\rho} 
\end{bmatrix},
\end{equation}
where $b_{\perp}^2 = b_1^2 + b_3^2 $ and we introduce the auxiliary variables
\begin{equation}
\begin{aligned}
\Psi_{\pm f} &= \frac{\alpha_f\rho}{2}\|\vec{u}\|^2 + a\alpha_s\rho b_{\perp} + \frac{\alpha_f\rho a^2}{\gamma - 1} \pm \alpha_f c_f \rho v \mp \alpha_s c_s \rho\,sgn(b_2)\left(u\beta_1 + w\beta_3\right), \\ 
\Psi_{\pm s} &= \frac{\alpha_s\rho}{2}\|\vec{u}\|^2 - a\alpha_f\rho b_{\perp} + \frac{\alpha_s\rho a^2}{\gamma-1} \pm \alpha_s c_s \rho v \pm \alpha_f c_f \rho\,sgn(b_2)\left(u\beta_1 + w\beta_3\right).
\end{aligned}
\end{equation}
\end{itemize}

\begin{cor} (Entropy Stable - Roe Type Stabilization (ES-Roe); $y-$direction) If we apply the diagonal scaling matrix
\begin{equation}
\matrix{T} = diag\left(\frac{1}{\sqrt{2\rho\gamma}},\,\sqrt{\frac{p}{2\rho^3b_{\perp}^2}},\,\frac{1}{\sqrt{2\rho\gamma}},\,\sqrt{\frac{\rho(\gamma-1)}{\gamma}},\,\sqrt{\frac{p}{\rho}},\,\frac{1}{\sqrt{2\rho\gamma}},\,\sqrt{\frac{p}{2\rho^3b_{\perp}^2}},\,\frac{1}{\sqrt{2\rho\gamma}}\right),
\end{equation}
to the matrix of right eigenvectors $\widehat{\matrix{R}}_y$ \eqref{rightEVy}, then we obtain the Merriam identity \cite{merriam1989} (Eq. 7.3.1 pg. 77) 
\begin{equation}
\matrix{H} = \widetilde{\matrix{R}}_y\widetilde{\matrix{R}}^T_y = \left(\widehat{\matrix{R}}_y\matrix{T}\right) \left(\widehat{\matrix{R}}_y\matrix{T}\right)^T = \widehat{\matrix{R}}_y\matrix{S}\widehat{\matrix{R}}_y^T,
\end{equation}
that relates the right eigenvectors of $\widehat{\matrix{B}}$ to the entropy Jacobian matrix \eqref{entropyJacobian}. For convenience, we introduce the diagonal scaling matrix $\matrix{S}=\matrix{T}\,^2$. We then have the guaranteed entropy stable flux interface contribution
\begin{equation}
\vec{g}^{*,ES\textrm{-}Roe} = \vec{g}^{*,ec} - \frac{1}{2} \widehat{\matrix{R}}_y|\widehat{\boldsymbol\Lambda}|\matrix{S}\widehat{\matrix{R}}_y^T\jump{\vec{v}}.
\end{equation}
\end{cor}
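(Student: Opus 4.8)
The plan is to mirror the proof of the $x$-direction ES-Roe theorem, exploiting the fact that the entire construction is covariant under permutation of the Cartesian axes. First I would introduce the dissipation operator for the $y$-direction flux as $-\tfrac{1}{2}|\widehat{\matrix{B}}|\jump{\vec{q}} = -\tfrac{1}{2}\widehat{\matrix{R}}_y|\widehat{\boldsymbol\Lambda}|\widehat{\matrix{L}}_y\jump{\vec{q}}$, where the eigendecomposition $\widehat{\matrix{B}} = \widehat{\matrix{R}}_y\widehat{\boldsymbol\Lambda}\widehat{\matrix{L}}_y$ is supplied by the eigenvalues and rescaled right eigenvectors listed immediately above the corollary, and $\widehat{\matrix{L}}_y = \widehat{\matrix{R}}_y^{-1}$. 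This gives the candidate flux $\vec{g}^{*,ES\text{-}Roe} = \vec{g}^{*,ec} - \tfrac{1}{2}\widehat{\matrix{R}}_y|\widehat{\boldsymbol\Lambda}|\widehat{\matrix{L}}_y\jump{\vec{q}}$, and I must show it satisfies the entropy inequality.

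Second, I would reuse the discrete entropy balance verbatim. Because the $y$-direction entropy potential $\phi_y$ in \eqref{EntropyPotentialy} and entropy flux $G$ play exactly the roles that $\phi$ and $F$ play in the $x$-direction analysis, the corollary-level entropy conservative flux $\vec{g}^{*,ec}$ in \eqref{Eq:entropyconservative-y} satisfies the $y$-direction analog of the discrete entropy conservation condition \eqref{entropyConservationCondition1}. Repeating the chain of equalities in \eqref{TotalUpdate2} then collapses the conservative part to $-\jump{G}$ and leaves precisely the dissipative contribution $-\tfrac{1}{2}\jump{\vec{v}}^T\widehat{\matrix{R}}_y|\widehat{\boldsymbol\Lambda}|\widehat{\matrix{L}}_y\jump{\vec{q}}$ as the sole term that must be shown non-positive.

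The main obstacle is verifying the Merriam identity $\matrix{H} = \widehat{\matrix{R}}_y\matrix{S}\widehat{\matrix{R}}_y^T$ for the stated $\matrix{T}$. By the eigenvector scaling Lemma, since the entropy Jacobian $\matrix{H}$ in \eqref{entropyJacobian} is a symmetric positive definite right symmetrizer for the Powell-augmented Jacobian $\widehat{\matrix{B}}$ (symmetrizability being restored in the $y$-direction by the term $\matrix{P}_y$), there exists a block-diagonal $\matrix{T}$ realizing $\matrix{H} = \widetilde{\matrix{R}}_y\widetilde{\matrix{R}}_y^T$ with $\widetilde{\matrix{R}}_y = \widehat{\matrix{R}}_y\matrix{T}$. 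The substantive work is confirming that the \emph{same} diagonal matrix $\matrix{T}$ used for the $x$-direction reproduces $\matrix{H}$ here. Rather than grinding out all entries, I would observe that passing from the $x$-normal to the $y$-normal problem is a rigid rotation of the coordinate frame under which (i) the entropy Jacobian $\matrix{H}$ is invariant, because it treats the three velocity components symmetrically and the three magnetic components symmetrically; (ii) the scalar wave speeds $c_{f,s}$, $c_a$, the sound speed $a$, and the parameters $\alpha_{f,s}$ are invariant; and (iii) the eigenvectors transform by the corresponding orthogonal change of basis. Since $\matrix{T}$ depends only on the rotation-invariant scalars $\rho$, $p$, $\gamma$, and the appropriately-defined $b_\perp$, it is unchanged, so the entrywise verification reduces to the already-established $x$-direction computation; the rotation-invariant pairwise orthogonality among the entropy/divergence, Alfvén, and magnetoacoustic modes guarantees $\matrix{T}$ remains block diagonal with exactly the listed entries.

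Finally, with the Merriam identity in hand I would close the argument precisely as in \eqref{signSatified2}: using $\jump{\vec{q}} \simeq \matrix{H}\jump{\vec{v}} = \widehat{\matrix{R}}_y\matrix{S}\widehat{\matrix{R}}_y^T\jump{\vec{v}}$ and $\widehat{\matrix{L}}_y = \widehat{\matrix{R}}_y^{-1}$, the dissipative term becomes $-\tfrac{1}{2}\jump{\vec{v}}^T\widehat{\matrix{R}}_y|\widehat{\boldsymbol\Lambda}|\matrix{S}\widehat{\matrix{R}}_y^T\jump{\vec{v}}$. Because $|\widehat{\boldsymbol\Lambda}|$ and $\matrix{S}=\matrix{T}^2$ are diagonal with non-negative entries, this is a negative semi-definite quadratic form in $\jump{\vec{v}}$, which establishes $\partial_t(\Delta y_L U_L + \Delta y_R U_R) + \jump{G} \le 0$ and hence the entropy stability of the stated $y$-direction flux $\vec{g}^{*,ES\text{-}Roe}$.
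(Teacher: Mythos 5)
Your proposal is correct and its overall architecture --- define the dissipation via the eigendecomposition of the Powell-augmented $y$-Jacobian, reuse the discrete entropy balance so that only $-\tfrac{1}{2}\jump{\vec{v}}^T\widehat{\matrix{R}}_y|\widehat{\boldsymbol\Lambda}|\widehat{\matrix{L}}_y\jump{\vec{q}}$ survives, and then invoke the Merriam identity to turn that into a manifestly non-positive quadratic form --- is exactly the route the paper takes; the paper simply declares the corollary to follow ``just as was done'' in the $x$-direction, where the scaling matrix $\matrix{T}$ is found by direct, entrywise algebraic verification of $\matrix{H}=\widehat{\matrix{R}}\matrix{S}\widehat{\matrix{R}}^T$. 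The one place you genuinely depart is in \emph{how} you verify the Merriam identity for $\widehat{\matrix{R}}_y$: instead of repeating the computation, you argue by covariance under the orthogonal change of frame taking the $x$-normal problem to the $y$-normal one, using that $\matrix{H}$, the wave speeds, $\alpha_{f,s}$, and the (re-indexed) $b_\perp$ are all invariant while the eigenvectors transform by the corresponding permutation of components. This is a legitimate and more economical argument, and it buys a proof that scales to the $z$-direction for free; the paper's brute-force route buys nothing extra here beyond what your argument gives. Two small points you should make explicit to be airtight: (i) the axis interchange is a reflection rather than a rotation, but only orthogonality of the transformation is used, and (ii) the listed $y$-direction eigenvectors agree with the permuted $x$-direction ones only up to signs (e.g.\ the Alfv\'{e}n columns), which is harmless because the identity $\matrix{H}=\widehat{\matrix{R}}_y\matrix{S}\widehat{\matrix{R}}_y^T$ and the quadratic form $\jump{\vec{v}}^T\widehat{\matrix{R}}_y|\widehat{\boldsymbol\Lambda}|\matrix{S}\widehat{\matrix{R}}_y^T\jump{\vec{v}}$ are insensitive to flipping the sign of any column.
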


\begin{rem} \textit{(Entropy Stable - Local Lax Friedrichs Type Stabilization (ES-LLF); $y-$direction)}
If we choose the dissipation matrix to be
\begin{equation}
\matrix{D} = |\lambda_{max}|\matrix{I},
\end{equation}
where $\lambda_{max}$ is the largest eigenvalue of the system from $\widehat{\matrix{B}}$ and $\matrix{I}$ is the identity matrix, then we obtain a local Lax-Friedrichs type interface stabilization
\begin{equation}\label{LFDissy}
\begin{aligned}
\vec{g}^{*,ES\textrm{-}LLF} &= \vec{g}^{*,ec} - \frac{1}{2}|\lambda_{max}|\matrix{H}\jump{\vec{v}}.
\end{aligned}
\end{equation} 
\end{rem}

Finally, we describe the entropy stable fluxes in the $z-$ direction. To do so we require the eigendecomposition of the matrix $\widehat{C}$. For convenience we denote the matrix of right eigenvectors $\widehat{\matrix{R}}_z$ with columns given by the vectors in the following order
\begin{equation}\label{rightEVz}
\widehat{\matrix{R}}_z = \Big[ \hat{\,\vec{r}}_{-f} \big|\,\hat{\vec{r}}_{-a} \big|\,\hat{\vec{r}}_{-s} \big|\,\hat{\vec{r}}_{E} \big|\,\hat{\vec{r}}_{D} \big|\,\hat{\vec{r}}_{+s} \big|\,\hat{\vec{r}}_{+a} \big|\,\hat{\vec{r}}_{+f} \Big],
\end{equation}
and $\widehat{\matrix{L}}_z = \widehat{\matrix{R}}_z^{-1}$.
\begin{itemize}
\item[] \underline{Entropy and Divergence Waves}: $\lambda_{E,D} = w$
\begin{equation}
\widehat{\vec{r}}_E = \begin{bmatrix} 1 \\ u \\v \\w \\ \frac{\|\vec{u}\|^2}{2} \\0 \\0 \\0 \end{bmatrix},\quad\widehat{\vec{r}}_D = \begin{bmatrix} 0 \\ 0 \\0 \\0 \\\sqrt{\rho}\,b_3 \\0 \\0 \\1 \end{bmatrix},
\end{equation}
\item[] \underline{Alfv\'{e}n Waves}: $\lambda_{\pm a} = w\pm b_3$ 
\begin{equation}
\widehat{\vec{r}}_{\pm a} = \begin{bmatrix}
0 \\
\mp \rho^{\frac{3}{2}}\,b_2 \\
\pm \rho^{\frac{3}{2}}\,b_1 \\
0 \\
\mp \rho^{\frac{3}{2}}(b_2 u - b_1 v) \\
-\rho b_2 \\
\rho b_1\\
0
\end{bmatrix},
\end{equation}
\item[] \underline{Magnetoacoustic Waves}: $\lambda_{\pm f,\pm s} = w\pm c_{f,s}$ 
\begin{equation}
\widehat{\vec{r}}_{\pm f} = \begin{bmatrix}
\alpha_f\rho \\[0.1cm]
\rho\left(\alpha_f u \mp \alpha_s c_s \beta_1 sgn(b_3) \right) \\[0.1cm]
\rho\left(\alpha_f v \mp \alpha_s c_s \beta_2 sgn(b_3) \right) \\[0.1cm]
\alpha_f\rho(w \pm c_{f}) \\[0.1cm]
\Psi_{\pm f} \\[0.1cm]
\alpha_s a \beta_1 \sqrt{\rho} \\[0.1cm]
\alpha_s a \beta_2 \sqrt{\rho} \\[0.1cm]
0
\end{bmatrix},
\qquad 
\widehat{\vec{r}}_{\pm s} = \begin{bmatrix}
\alpha_s\rho \\[0.1cm]
\rho\left(\alpha_s u \pm \alpha_f c_f \beta_1 sgn(b_3)\right) \\[0.1cm]
\rho\left(\alpha_s v \pm \alpha_f c_f \beta_2 sgn(b_3)\right) \\[0.1cm]
\alpha_s\rho\left(w \pm c_s\right) \\[0.1cm]
\Psi_{\pm s} \\[0.1cm]
-\alpha_f a \beta_1 \sqrt{\rho} \\[0.1cm]
-\alpha_f a \beta_2 \sqrt{\rho}  \\[0.1cm]
0
\end{bmatrix},
\end{equation}
where $b_{\perp}^2 = b_1^2 + b_2^2 $ and we introduce the auxiliary variables
\begin{equation}
\begin{aligned}
\Psi_{\pm f} &= \frac{\alpha_f\rho}{2}\|\vec{u}\|^2 + a\alpha_s\rho b_{\perp} + \frac{\alpha_f\rho a^2}{\gamma - 1} \pm \alpha_f c_f \rho w \mp \alpha_s c_s \rho\,sgn(b_3)\left(u\beta_1 + v\beta_2\right), \\ 
\Psi_{\pm s} &= \frac{\alpha_s\rho}{2}\|\vec{u}\|^2 - a\alpha_f\rho b_{\perp} + \frac{\alpha_s\rho a^2}{\gamma-1} \pm \alpha_s c_s \rho w \pm \alpha_f c_f \rho\,sgn(b_3)\left(u\beta_1 + v\beta_2\right).
\end{aligned}
\end{equation}
\end{itemize}

\begin{cor} (Entropy Stable - Roe Type Stabilization (ES\textrm{-}Roe); $z-$direction) If we apply the diagonal scaling matrix
\begin{equation}
\matrix{T} = diag\left(\frac{1}{\sqrt{2\rho\gamma}},\,\sqrt{\frac{p}{2\rho^3b_{\perp}^2}},\,\frac{1}{\sqrt{2\rho\gamma}},\,\sqrt{\frac{\rho(\gamma-1)}{\gamma}},\,\sqrt{\frac{p}{\rho}},\,\frac{1}{\sqrt{2\rho\gamma}},\,\sqrt{\frac{p}{2\rho^3b_{\perp}^2}},\,\frac{1}{\sqrt{2\rho\gamma}}\right),
\end{equation}
to the matrix of right eigenvectors $\widehat{\matrix{R}}_z$ \eqref{rightEVz}, then we obtain the Merriam identity \cite{merriam1989} (Eq. 7.3.1 pg. 77) 
\begin{equation}
\matrix{H} = \widetilde{\matrix{R}}_z\widetilde{\matrix{R}}^T_z = \left(\widehat{\matrix{R}}_z\matrix{T}\right) \left(\widehat{\matrix{R}}_z\matrix{T}\right)^T = \widehat{\matrix{R}}_z\matrix{S}\widehat{\matrix{R}}_z^T,
\end{equation}
that relates the right eigenvectors of $\widehat{\matrix{B}}$ to the entropy Jacobian matrix \eqref{entropyJacobian}. For convenience, we introduce the diagonal scaling matrix $\matrix{S}=\matrix{T}\,^2$. We then have the guaranteed entropy stable flux interface contribution
\begin{equation}
\vec{h}^{*,ES\textrm{-}Roe} = \vec{h}^{*,ec} - \frac{1}{2} \widehat{\matrix{R}}_z|\widehat{\boldsymbol\Lambda}|\matrix{S}\widehat{\matrix{R}}_z^T\jump{\vec{v}}.
\end{equation}
\end{cor}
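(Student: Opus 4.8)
The plan is to transport the $x$-direction ES-Roe argument, equation by equation, to the $z$-direction. First I would recall that the entropy conserving flux $\vec{h}^{*,ec}$ of the preceding corollary satisfies the $z$-direction analogue of the discrete entropy conservation condition \eqref{entropyConservationCondition1} (with $\phi_z$ and $H$ replacing $\phi$ and $F$). Inserting the dissipative flux $\vec{h}^{*} = \vec{h}^{*,ec} - \tfrac{1}{2}|\widehat{\matrix{C}}|\jump{\vec{q}}$ with $|\widehat{\matrix{C}}| = \widehat{\matrix{R}}_z|\widehat{\boldsymbol\Lambda}|\widehat{\matrix{L}}_z$ into the discrete entropy balance \eqref{TotalUpdate} then collapses the conservative part exactly as in \eqref{TotalUpdate2}, leaving
\begin{equation}
\pderivative{}{t}(\Delta z_L U_L + \Delta z_R U_R) + \jump{H} = -\frac{1}{2}\jump{\vec{v}}^T\widehat{\matrix{R}}_z|\widehat{\boldsymbol\Lambda}|\widehat{\matrix{L}}_z\jump{\vec{q}}.
\end{equation}
The corollary is proved once the right-hand side is shown to be non-positive.

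Next I would linearize the conservative jump through the entropy Jacobian, $\jump{\vec{q}} \simeq \vec{q}_{\vec{v}}\jump{\vec{v}} = \matrix{H}\jump{\vec{v}}$, and mimic the chain of equalities in \eqref{signSatified2}. Everything then rests on the Merriam identity $\matrix{H} = \widehat{\matrix{R}}_z\matrix{S}\widehat{\matrix{R}}_z^T$ claimed in the statement: granting it, and using $\widehat{\matrix{L}}_z = \widehat{\matrix{R}}_z^{-1}$, the dissipation reduces to $-\tfrac{1}{2}\jump{\vec{v}}^T\widehat{\matrix{R}}_z|\widehat{\boldsymbol\Lambda}|\matrix{S}\widehat{\matrix{R}}_z^T\jump{\vec{v}}$, which is guaranteed non-positive because $\matrix{S} = \matrix{T}^2$ is positive diagonal and $|\widehat{\boldsymbol\Lambda}|$ has non-negative entries. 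This is precisely the entropy stable contribution asserted in the corollary.

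It remains to establish the Merriam identity for the ordering \eqref{rightEVz}, and here the cleanest route is to invoke the rotational invariance of the ideal MHD fluxes rather than recompute an $8\times8$ product. The $z$-direction eigenvectors are the image of the $x$-direction eigenvectors \eqref{rightEV} under the coordinate transposition that exchanges the normal direction and its field, $u \leftrightarrow w$ and $B_1 \leftrightarrow B_3$, with $(v,B_2)$ fixed and with $b_\perp$ read off as the transverse magnitude $b_\perp^2 = b_1^2 + b_2^2$. Concretely there is a permutation matrix $\matrix{\Pi}$ acting on the eight conserved components and a diagonal sign matrix $\matrix{\Sigma}$ (the per-column eigenvector signs) with $\widehat{\matrix{R}}_z = \matrix{\Pi}\,\widehat{\matrix{R}}\,\matrix{\Sigma}$; since every characteristic speed $a, c_a, c_{f,s}$ is invariant under the transposition, the wave ordering and hence the diagonal $\matrix{S}$ are unchanged. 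One checks directly that the entropy Jacobian \eqref{entropyJacobian} is equivariant, $\matrix{H} = \matrix{\Pi}\matrix{H}\matrix{\Pi}^T$ (for instance the energy--field coupling $pB_1/\rho$ in position $(5,6)$ maps to $pB_3/\rho$ in position $(5,8)$). Because $\matrix{\Sigma}$ is diagonal, $\matrix{\Sigma}\matrix{S}\matrix{\Sigma} = \matrix{S}$, so the already-proven $x$-direction identity \eqref{MerriamIdentity} gives
\begin{equation}
\widehat{\matrix{R}}_z\matrix{S}\widehat{\matrix{R}}_z^T = \matrix{\Pi}\,\widehat{\matrix{R}}\matrix{S}\widehat{\matrix{R}}^T\,\matrix{\Pi}^T = \matrix{\Pi}\,\matrix{H}\,\matrix{\Pi}^T = \matrix{H},
\end{equation}
which is the required identity with the same scaling \eqref{scalingMatrixinProof}.

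The main obstacle is this verification of the Merriam identity. The direct route is an algebraically intense block multiplication that must also treat the degenerate limit $b_\perp \to 0$, where $\beta_{1,2}$ are assigned arbitrarily; the symmetry route trades that computation for careful bookkeeping, namely confirming the exact form of $\matrix{\Pi}$ on the momentum and magnetic blocks and verifying that the per-column signs collected in $\matrix{\Sigma}$ (e.g. the $sgn(b_3)$ factors in the magnetoacoustic and Alfv\'en eigenvectors) enter only quadratically and therefore drop out. Once the identity is in hand, the entropy stable $z$-direction flux and its negativity follow immediately, exactly as in the $x$-direction theorem.
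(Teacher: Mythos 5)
Your proposal is correct, and the first half (the entropy balance, the substitution $\jump{\vec{q}}\simeq\matrix{H}\jump{\vec{v}}$, and the collapse to the non-positive quadratic form $-\tfrac{1}{2}\jump{\vec{v}}^T\widehat{\matrix{R}}_z|\widehat{\boldsymbol\Lambda}|\matrix{S}\widehat{\matrix{R}}_z^T\jump{\vec{v}}$) is exactly the argument the paper uses for the $x$-direction theorem and implicitly reuses here. Where you genuinely depart from the paper is in how the Merriam identity $\matrix{H}=\widehat{\matrix{R}}_z\matrix{S}\widehat{\matrix{R}}_z^T$ is established: the paper obtains the scaling matrix $\matrix{T}$ by direct algebraic verification (``a considerable amount of algebraic manipulation'') and simply asserts the analogous result in the other coordinate directions, whereas you transfer the already-proven $x$-direction identity \eqref{MerriamIdentity} via the transposition $u\leftrightarrow w$, $B_1\leftrightarrow B_3$, writing $\widehat{\matrix{R}}_z=\matrix{\Pi}\,\widehat{\matrix{R}}\,\matrix{\Sigma}$ and using the equivariance of $\matrix{H}$ and the fact that the diagonal signs cancel quadratically. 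This is a sound and more economical route --- one can check, for instance, that the Alfv\'en columns pick up $\Sigma=-1$ while the entropy, divergence, and magnetoacoustic columns carry $+1$, and that the relabeling sends the $x$-direction $\beta_{2,3}$ and $b_\perp^2=b_2^2+b_3^2$ to the correct $z$-direction quantities $\beta_{1,2}$ and $b_\perp^2=b_1^2+b_2^2$ --- so the symmetry argument replaces an $8\times 8$ recomputation with bookkeeping of $\matrix{\Pi}$ and $\matrix{\Sigma}$. What the paper's direct route buys is an independent verification in each direction that does not presuppose the exact normalization of the listed eigenvectors; what your route buys is a structural explanation of why the same $\matrix{T}$ (with the reinterpreted $b_\perp$) works in all three directions, including the degenerate limit $b_\perp\to 0$, which is inherited from the $x$-direction case rather than re-examined.
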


\begin{rem} \textit{(Entropy Stable - Local Lax-Friedrichs Type Stabilization (ES-LLF); $z-$direction)}
If we choose the dissipation matrix to be
\begin{equation}
\matrix{D} = |\lambda_{max}|\matrix{I},
\end{equation}
where $\lambda_{max}$ is the largest eigenvalue of the matrix $\widehat{\matrix{C}}$ and $\matrix{I}$ is the identity matrix, then we obtain a local Lax-Friedrichs type interface stabilization
\begin{equation}\label{LFDissz}
\begin{aligned}
\vec{h}^{*,ES\textrm{-}LLF} &= \vec{h}^{*,ec} - \frac{1}{2}|\lambda_{max}|\matrix{H}\jump{\vec{v}}.
\end{aligned}
\end{equation} 
\end{rem}

{\color{black}{\section{Entropy and Kinetic Energy Conserving Numerical Flux}\label{EKEP}

Inspired by Chandrashekar \cite{chandrashekar2013} we develop an alternate baseline numerical flux function that is both entropy and kinetic energy conservative. We explicitly derive the flux in the $x-$direction, but generalization to higher dimensions is straightforward through symmetry arguments as can be seen in \ref{3DFluxes}. To do so we introduce notation for the inverse of the temperature 
\begin{equation}\label{tempInverse}
\beta = \frac{1}{RT} = \frac{\rho}{2 p}.
\end{equation}
\begin{cor}[Entropy and Kinetic Energy Conserving Numerical Flux (EKEC)]
If we define the logarithmic mean $(\cdot)^{\ln}$ \eqref{logMean}, the arithmetic mean $\average{\cdot}$, and discretize the source term in the finite volume method to contribute to each element as
\begin{equation}\label{SourceTermDiscyEKEP}
\vec{s}_{i} = \frac{1}{2}\left(\vec{s}_{i+\frac{1}{2}} + \vec{s}_{i-\frac{1}{2}} \right)= -\frac{1}{2}\left(
\jump{B_1}_{i+\tfrac{1}{2}}\begin{bmatrix} 
0\\
0\\
0\\
0\\
0\\
\frac{\average{\beta u}\average{B_1}}{\average{\Delta x \beta B_1}}\\[0.15cm]
\frac{\average{\beta v}\average{B_2}}{\average{\Delta x \beta B_2}}\\[0.15cm]
\frac{\average{\beta w}\average{B_3}}{\average{\Delta x \beta B_3}}
\end{bmatrix}_{i+\tfrac{1}{2}}
+
\jump{B_1}_{i-\tfrac{1}{2}}\begin{bmatrix} 
0\\
0\\
0\\
0\\
0\\
\frac{\average{\beta u}\average{B_1}}{\average{\Delta x \beta B_1}}\\[0.15cm]
\frac{\average{\beta v}\average{B_2}}{\average{\Delta x \beta B_2}}\\[0.15cm]
\frac{\average{\beta w}\average{B_3}}{\average{\Delta x \beta B_3}}
\end{bmatrix}_{i-\tfrac{1}{2}}
\right),
\end{equation}
then we determine a discrete entropy and kinetic energy conservative flux to be
\begin{equation}\label{Eq:entropyconservative-yEKEP}
\resizebox{0.925\hsize}{!}{$\vec{f}^{*,ekec} = \begin{bmatrix}
\rho^{\ln}\average{u} \\[0.15cm]
\rho^{\ln}\average{u}^2 + \frac{\average{\rho}}{2\average{\beta}} +\frac{1}{2}\left(\average{B_1^2}+\average{B_2^2}+\average{B_3^2}\right) - \average{B_1^2} \\[0.15cm]
\rho^{\ln}\average{u}\average{v} - \average{B_1B_2} \\[0.15cm]
\rho^{\ln}\average{u}\average{w} - \average{B_1B_3} \\[0.15cm]
\frac{\rho^{\ln}\average{u}}{2(\gamma-1)\beta^{\ln}} +\frac{\average{\rho}\average{u}}{2\average{\beta}} -\frac{1}{2}\rho^{\ln}\average{u}\left(\average{u^2}+\average{v^2}+\average{w^2}\right)+\rho^{\ln}\average{u}\left(\average{u}^2+\average{v}^2+\average{w}^2\right)\\
+\frac{\average{B_2}}{\average{\beta}}\left(\average{\beta u}\average{B_2} - \average{\beta v}\average{B_1}\right) + \frac{\average{B_3}}{\average{\beta}}\left(\average{\beta u}\average{B_3} - \average{\beta w}\average{B_1}\right)\\[0.15cm]
0 \\[0.15cm]
\frac{1}{\average{\beta}}\left(\average{\beta u}\average{B_2} - \average{\beta v}\average{B_1}\right) \\[0.15cm]
\frac{1}{\average{\beta}}\left(\average{\beta u}\average{B_3} - \average{\beta w}\average{B_1}\right)
\end{bmatrix}.$}
\end{equation}
{\color{black}{From the definition of the inverse of the temperature \eqref{tempInverse}, we note that the source term discretization \eqref{SourceTermDiscyEKEP} is identical to the discretization \eqref{SourceTermDisc}.}}
\end{cor}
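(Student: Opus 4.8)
The plan is to mirror the proof of the main Entropy Conserving Numerical Flux theorem in Sec.~\ref{Sec:EntropyConservingFlux}, but now imposing the additional requirement of discrete kinetic energy conservation in the sense of Chandrashekar~\cite{chandrashekar2013}. The new ingredient is the inverse temperature $\beta$ from \eqref{tempInverse}, which linearizes the entropy variables: since $p = \rho/(2\beta)$ one has $v_2 = 2\beta u$, $v_3 = 2\beta v$, $v_4 = 2\beta w$, $v_5 = -2\beta$, $v_6 = 2\beta B_1$, $v_7 = 2\beta B_2$, $v_8 = 2\beta B_3$, and the physical entropy becomes $s = (1-\gamma)\ln(\rho) - \ln(\beta) - \ln(2)$.

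First, I would establish discrete entropy conservation. Using the identity $\jump{s} = (1-\gamma)\,\jump{\rho}/\rho^{\ln} - \jump{\beta}/\beta^{\ln}$ together with the product rules for the jump operator, I would expand the left-hand side $\jump{\vec{v}}^T\vec{f}^{*,ekec}$ and the right-hand side of the entropy conserving condition \eqref{entropyConservationCondition2} into linear jumps of the natural variables $\jump{\rho}$, $\jump{\beta}$, $\jump{u}$, $\jump{v}$, $\jump{w}$, $\jump{B_1}$, $\jump{B_2}$, $\jump{B_3}$. Gathering like terms produces a system of eight scalar equations, exactly analogous to \eqref{Jump1}--\eqref{Jump8}. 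As in the main theorem, the equation arising from $\jump{B_1}$ contains extraneous terms that would force the sixth flux component to be nonzero; the source term discretization \eqref{SourceTermDiscyEKEP} is designed precisely to cancel these, restoring $f_6^* = 0$. Solving the remaining seven equations yields the flux \eqref{Eq:entropyconservative-yEKEP}.

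Second, I would verify kinetic energy conservation. The discrete kinetic energy balance is obtained by contracting the semi-discrete momentum update with the velocity; following Chandrashekar, the flux is kinetic energy conserving precisely when the convective part of each momentum flux factors as the common mass flux $f_1^{*} = \rho^{\ln}\average{u}$ times the appropriate velocity average, with every remaining contribution being a discrete pressure or magnetic-stress term that telescopes. I would check directly that the momentum components of \eqref{Eq:entropyconservative-yEKEP} have this structure, namely $f_2^* = \average{u}\,f_1^* + \tfrac{\average{\rho}}{2\average{\beta}} + \tfrac{1}{2}(\average{B_1^2}+\average{B_2^2}+\average{B_3^2}) - \average{B_1^2}$, $f_3^* = \average{v}\,f_1^* - \average{B_1B_2}$, and $f_4^* = \average{w}\,f_1^* - \average{B_1B_3}$, which confirms the kinetic energy preserving property. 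Finally, I would confirm consistency by setting the left and right states equal, so that all means collapse to point values and each component of \eqref{Eq:entropyconservative-yEKEP} reduces to the physical flux, and observe (as the closing remark records) that \eqref{SourceTermDiscyEKEP} coincides with \eqref{SourceTermDisc}.

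The main obstacle is that, unlike the main theorem where the single scalar entropy condition \eqref{entropyConservationCondition2} left many flux components free, here the kinetic energy conservation requirement must hold simultaneously, removing that freedom. The delicate step is the energy flux component $f_5^*$: it is by far the most algebraically involved and must close the $\jump{\beta}$ and velocity-jump equations of the entropy system while remaining compatible with the factored momentum fluxes demanded by kinetic energy conservation. Verifying that the specific mixture of logarithmic means ($\rho^{\ln}$, $\beta^{\ln}$) and arithmetic means of $\beta$-weighted quantities appearing in \eqref{Eq:entropyconservative-yEKEP} makes both conditions hold at once is where the real computation lies.
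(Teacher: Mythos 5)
Your proposal follows essentially the same route as the paper's proof: rewrite the entropy variables and the condition \eqref{entropyConservationCondition2} in terms of the inverse temperature $\beta$, expand both sides into linear jumps with no mixing of hydrodynamic and magnetic quantities, let the extraneous terms in the $\jump{B_1}$ equation dictate the source term discretization that restores $f_6^* = 0$, and solve the remaining seven equations for the flux components. The one step you add that the paper does not carry out explicitly is the verification of the kinetic energy preserving factorization $f_2^* = \average{u}f_1^* + (\text{total pressure terms})$, $f_3^* = \average{v}f_1^* - \average{B_1B_2}$, $f_4^* = \average{w}f_1^* - \average{B_1B_3}$; the paper instead relies on the closing remark that the Euler part of \eqref{Eq:entropyconservative-yEKEP} reduces to Chandrashekar's flux, so your explicit check is a welcome strengthening rather than a divergence.
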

\begin{proof}
We begin by rewriting the entropy variables \eqref{entropyVariables} and the entropy conservation condition \eqref{entropyConservationCondition2} in terms of the quantity $\beta$
\begin{equation}\label{entVarsEKEP}
\vec{v} = \left[ \frac{\gamma - s}{\gamma - 1} - \beta\|\vec{u}\|^2,\;2\beta u,\;2\beta v,\;2\beta w,\;-2\beta,\;2\beta B_1,\;2\beta B_2,\;2\beta B_3\right]^T.
\end{equation}
where we rewrite the physical entropy as
\begin{equation}
s = \ln(p) - \gamma\ln(\rho) = -(\gamma-1)\ln(\rho) - \ln(\beta) - \ln(2),
\end{equation}
and
\begin{equation}\label{EKEPConservation}
\jump{\,\vec{v}\,}^T\vec{f}^* = \jump{\, \rho u \,} +\jump{\,\beta u\|\vec{B}\|^2\,} - 2\jump{\,\beta B_1(\vec{u}\cdot\vec{B})\,} - \average{\Delta x \vec{v}}^T{\vec{s}}_{i+\tfrac{1}{2}}.
\end{equation}
Now the proof follows the same steps as the proof given for Thm. 1. We expand each of the linear jump terms such that there is no mixing between the hydrodynamic and magnetic field variables. This process generates a system of eight equations for the eight unknown numerical flux components. Again, a consistent source term discretization reveals used to cancel extraneous terms in the $f_6^*$ flux component. Once the source term discretization is determined we solve for the remaining flux components and obtain \eqref{Eq:entropyconservative-yEKEP}.
\end{proof}
\begin{rem}
The entropy and kinetic energy conserving flux \eqref{Eq:entropyconservative-yEKEP} can, again, be split into Euler and magnetic components. We recover in the Euler components the entropy and kinetic energy conserving flux originally developed by Chandrashekar \cite{chandrashekar2013}.
\end{rem}
}}


\bibliographystyle{elsarticle-num}
\bibliography{References.bib}

\end{document}